\newtheorem{df}{Definition}[section]
\newtheorem{thm}[df]{Theorem}
\newtheorem{prop}[df]{Proposition}
\newtheorem{rem}[df]{Remark}
\newtheorem{lem}[df]{Lemma}
\newtheorem{cla}[df]{Claim}
\newtheorem{cor}[df]{Corollary}
\newcommand{\eq}{eqnarray*}
\newcommand{\ti}{\tilde}
\title[\resizebox{4.5in}{!}{Nilpotent Higgs bundles and the Hodge metric on the Calabi-Yau moduli}]{Nilpotent Higgs bundles and the Hodge metric on the Calabi-Yau moduli}
\author{Qiongling Li}
\address{Qiongling Li\\
Chern Institute of Mathematics and LPMC\\
Nankai University\\
No. 94 Weijinlu Nankai District\\
Tianjin\\
P.R.China 300071}
\email{qiongling.li@nankai.edu.cn}
\date{}
\begin{document}
\maketitle
\begin{abstract}
We study an algebraic inequality for nilpotent matrices and show some interesting geometric applications: (i) obtaining topological information for nilpotent polystable Higgs bundles over a compact Riemann surface; (ii) obtaining a sharp upper bound of the holomorphic sectional curvatures of the period domain and the Hodge metric on the Calabi-Yau moduli. 
\end{abstract}
\section{Introduction}
We study an algebraic function on orbits of nilpotent matrices in this paper and show how it gives geometric applications in the following settings by relating the algebraic function with the curvature formula on homogeneous spaces. 
  
First, given a compact Riemann surface $\Sigma$ of genus at least $2$, the non-abelian correspondence, developed by Hitchin \cite{Hitchin87}, Simpson \cite{Simpson88}, Corlette \cite{Corlette}, and Donaldson \cite{Donaldson}, says that the space of conjugacy classes of reductive representations $\rho:\pi_1(\Sigma)\rightarrow SL(n,\mathbb C)$ is homeomorphic to the moduli space of polystable $SL(n,\mathbb C)$-Higgs bundles over $\Sigma$. A Higgs bundle $(E,\phi)$ is said to be \textit{nilpotent} if $\phi$ satisfies $\phi^{\otimes n}=0$. We will obtain topological properties of corresponding representations for nilpotent polystable $SL(n,\mathbb C)$-Higgs bundles over $\Sigma$. 

Secondly,  for a deformation family of polarized K\"ahler manifolds, by considering the variation of primitive cohomology groups, it gives rise to a period map $\mathcal P$ from the base manifold to the period domain $\mathcal D$. By Griffiths \cite{Griffiths}, the period map is holomorphic and the image of the tangential map $T\mathcal P$ lies in the horizontal distribution $T^h\mathcal D$ of $\mathcal D$, which has tangent vector as a special type of nilpotent matrices. We will give a sharp upper bound of holomorphic sectional curvatures of $T^h\mathcal D$. As an application, we show the holomorphic sectional curvature bound of the Hodge metric, defined by Lu \cite{Lu}, on the universal Calabi-Yau moduli. 

\subsection{Riemann surface}
For a compact Riemann surface $\Sigma$ of genus at least $2$, let $K$ denote the canonical bundle of $\Sigma$. The uniformization gives rises to a representation $\hat j_{\Sigma}:\pi_1(\Sigma)\rightarrow PSL(2,\mathbb{R})$ which can be lifted to $j_{\Sigma}:\pi_1(\Sigma)\rightarrow SL(2,\mathbb{R})$. 

We first stratify the nilpotent cone of the moduli space of Higgs bundles according to their Jordan types. As in Schiffmann \cite{Schiffmann} Section 3, for a nilpotent Higgs bundle $(E,\phi)$ over $\Sigma$, one can define its \textit{Jordan type} $J(E,\phi)\in\mathcal P_n$, where $\mathcal P_n$ is the space of all partitions of $n$, as follows: 
$$J(E,\phi)=(\lambda_1,\lambda_2,\cdots,\lambda_n)\in \mathcal P_n,$$
where $F_i$ is a holomorphic subbundle of $E$ generated by $\ker(\phi^i)$ and 
$rank(F_i)-rank(F_{i+1})=\lambda_i+\cdots+\lambda_n,$ for $i=1,\cdots,n.$ There is a natural partial order in $\mathcal P_n$, thus we can say a nilpotent Higgs bundle is of Jordan type at most $\lambda$.

Given a partition $\lambda\in \mathcal{P}_n$, using the unique irreducible representation $\tau_r: SL(2, \mathbb C)\rightarrow SL(r,\mathbb C)$, one can define a natural representation $\tau_{\lambda}: SL(2,\mathbb C)\rightarrow SL(n,\mathbb C).$ Let $\mathbb P$ be the natural projection from $SL(n,\mathbb C)$ to $PSL(n,\mathbb C)$. Given a $SL(n,\mathbb C)$-invariant metric on $SL(n,\mathbb C)/SU(n)$ induced by $(X,Y)=2tr(XY)$ for $X,Y\in sl(n,\mathbb C)$, the translation length of $\gamma$ with respect to a representation $\rho:\pi_1(\Sigma)\rightarrow SL(n,\mathbb{C})$ is defined by 
$$l_{\rho}(\gamma):=\inf_{x\in SL(n,\mathbb C)/SU(n)}d(x,\rho(\gamma)x),$$ where $d(\cdot,\cdot)$ is the distance induced by the Riemannian metric.
\begin{thm}\label{NilpotentDominationIntro}(Theorem \ref{NilpotentDomination})
Suppose a nilpotent polystable $SL(n,\mathbb{C})$-Higgs bundle $(E,\phi)$ over $\Sigma$ is of Jordan type at most $\lambda\in P_n$. Let $\rho:\pi_1(\Sigma)\rightarrow SL(n,\mathbb C)$ be its associated representation. Then there exists a positive constant $\alpha<1$ such that the translation length spectrum satisfies $$l_\rho\leq \alpha\cdot l_{\tau_{\lambda}\circ j_\Sigma},$$
unless $\mathbb P(\rho)=\mathbb P(\tau_{\lambda}\circ j_\Sigma)$, in which case,
\[(E,\phi)=\oplus_{i=1}^r(E_i,\phi_i), \quad (E_i,\phi_i)=Sym^{\lambda_i-1}(K^{\frac{1}{2}}\oplus K^{-\frac{1}{2}}, \begin{pmatrix}0&0\\1&0\end{pmatrix})\otimes (V_i, 0),\]  
where $\lambda=(\lambda_1^{k_1},\cdots,\lambda_r^{k_r})$ and for each $i$, $V_i$ is a polystable holomorphic vector bundle of rank $k_i$ satisfying $\prod_{i=1}^r\det(V_i)^{\lambda_i}=\mathcal O$. 
\end{thm}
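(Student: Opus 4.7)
The plan is to combine non-abelian Hodge theory with the algebraic inequality for nilpotent matrices highlighted in the abstract, through a Yau-type Schwarz lemma argument. By the non-abelian Hodge correspondence, $(E,\phi)$ carries a unique harmonic metric $h$ solving $F_h + [\phi,\phi^{*h}]=0$, which determines a $\rho$-equivariant harmonic map $f_\rho:\tilde\Sigma\to SL(n,\mathbb C)/SU(n)$. For the reference representation $\tau_\lambda\circ j_\Sigma$, the associated Higgs bundle is the explicit model $(E_\lambda,\phi_\lambda)=\oplus_i\mathrm{Sym}^{\lambda_i-1}(K^{1/2}\oplus K^{-1/2})$, whose harmonic metric $h_\lambda$ is induced by the hyperbolic metric on $\Sigma$ and whose harmonic map $f_\lambda$ is totally geodesic into the $\tau_\lambda$-image of $\mathbb H^2$.

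For any $\gamma\in\pi_1(\Sigma)$ and any lift $\tilde c$ of a representative loop, equivariance gives the length bound $l_\rho(\gamma)\leq\mathrm{length}(f_\rho\circ\tilde c)$. Choosing $c$ to be the hyperbolic geodesic representative so that $l_{\tau_\lambda\circ j_\Sigma}(\gamma)=\mathrm{length}(f_\lambda\circ\tilde c)$, the theorem reduces to a pointwise comparison of the pull-back metrics, and hence (up to normalization) to an inequality of the form $\|\phi\|_h^2\leq\alpha^2\|\phi_\lambda\|_{h_\lambda}^2$ on $\Sigma$ with $\alpha<1$. Establishing this pointwise bound is the main analytic step. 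The algebraic inequality for nilpotent matrices (applied fiberwise, since $\phi(p)$ is a nilpotent endomorphism of $(E_p,h_p)$ of Jordan type at most $\lambda$) controls $\|\phi(p)\|_{h(p)}^2$ from above; combined with Hitchin's equation it should yield an elliptic differential inequality for the ratio $u:=\|\phi\|_h^2/\|\phi_\lambda\|_{h_\lambda}^2$ of schematic form
\[\Delta\log u\geq C(1-u)\]
away from zeros of $\phi_\lambda$. The maximum principle on the compact surface $\Sigma$ then forces $\sup u\leq 1$, and strict inequality on an open set provides a uniform $\alpha<1$.

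The main obstacle will be the rigidity case. If no $\alpha<1$ works, then $u\equiv 1$ and equality holds in the algebraic inequality at every point. The equality case of the algebraic inequality should pin down the $h$-unitary conjugacy class of $\phi(p)$ to be exactly the principal nilpotent of Jordan type $\lambda$, producing a pointwise $h$-orthogonal splitting of $E$ into isotypic components. The task is then to upgrade this smooth splitting into a holomorphic, $\phi$-compatible direct-sum decomposition
\[(E,\phi)=\oplus_i\mathrm{Sym}^{\lambda_i-1}(K^{1/2}\oplus K^{-1/2})\otimes V_i.\]
I would use that equality in Hitchin's equation, together with the principal-nilpotent structure of $\phi$, forces the $h$-orthogonal splitting to be $\bar\partial_E$-holomorphic and preserved by $\phi$. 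Polystability of each auxiliary bundle $V_i$ and the determinantal condition $\prod_i\det(V_i)^{\lambda_i}=\mathcal O$ then follow, respectively, from the polystability of $(E,\phi)$ and from $\det E=\mathcal O$; the corresponding projective representations agree by construction, giving $\mathbb P(\rho)=\mathbb P(\tau_\lambda\circ j_\Sigma)$.
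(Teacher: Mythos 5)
Your strategy for the domination part coincides with the paper's: the algebraic inequality $K(A)=\|[A,A^*]\|^2/\|A\|^4\geq C_\lambda$ for nilpotent $A$ of Jordan type at most $\lambda$ (Proposition \ref{JordanInequality}) gives, via the curvature formula for the pullback metric of the equivariant harmonic map (Proposition \ref{Reducing2}) together with conformality ($\mathrm{tr}\,\phi^2=0$), the bound $\kappa(f^*g_X)\leq -C_\lambda/2$; the Ahlfors--Schwarz lemma then yields $f^*g_X\leq\alpha^2\cdot\frac{2}{C_\lambda}h_\Sigma$ with $\alpha<1$ unless equality holds identically, and metric domination passes to length-spectrum domination (Lemma \ref{LemmaA}). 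Two corrections to your write-up: the algebraic inequality does not control $\|\phi(p)\|_h^2$ from above --- it is a lower bound on the commutator term, i.e.\ an upper bound on the Gaussian curvature of $f^*g_X$, which is what feeds the Bochner/Ahlfors argument; and your schematic inequality should read $\Delta\log u\geq C(u-1)$, since with $C(1-u)$ the maximum principle at an interior maximum gives $\sup u\geq 1$, the opposite of the desired conclusion.

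The genuine gap is in the rigidity case. From $u\equiv 1$ you correctly extract that $\phi(p)$ is pointwise $SU(n)$-conjugate to $c\cdot X^\lambda$ (the equality case of Proposition \ref{JordanInequality}) and, from equality in the curvature comparison, that $f$ is totally geodesic. But the passage from this pointwise unitary normal form to a global holomorphic, $\phi$-invariant splitting $(E,\phi)=\oplus_i \mathrm{Sym}^{\lambda_i-1}(K^{1/2}\oplus K^{-1/2})\otimes(V_i,0)$ and to $\mathbb P(\rho)=\mathbb P(\tau_\lambda\circ j_\Sigma)$ is precisely the hard step, and ``equality in Hitchin's equation forces the $h$-orthogonal splitting to be $\bar\partial_E$-holomorphic'' is asserted rather than proved: a smooth orthogonal decomposition adapted to a nilpotent Higgs field is not holomorphic in general, and one must actually show the relevant second fundamental forms vanish. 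The paper instead invokes the classification of totally geodesic equivariant harmonic maps into $SL(n,\mathbb C)/SU(n)$ (Lemma \ref{LemmaB}, quoted from Dai--Li), which produces $f=L_x\circ\bar\tau_\pi\circ f_j$ and $\rho=\mathrm{Ad}_{x^{-1}}\circ\big((\tau_\pi\circ j)\cdot\mu_\pi\big)$ for \emph{some} partition $\pi$ with $C_\pi=C_\lambda$, and then still needs an extra argument to conclude $\pi=\lambda$ --- since $C_\pi=C_\lambda$ alone does not determine the partition (Remark \ref{PartitionLength}), one combines $\pi\leq\lambda$ with the strict monotonicity of $C$ on the dominance order (Lemma \ref{OrderOfPartition}). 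Your outline contains neither this classification nor an adequate substitute for it, so the structure of $(E,\phi)$ and of $\rho$ in the equality case is not established.
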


Note that $(n)$ is maximal in $\mathcal P_n$. As a direct corollary of Theorem \ref{NilpotentDominationIntro}, 
\begin{cor}\label{RoughDomination}
For any nilpotent polystable $SL(n,\mathbb{C})$-Higgs bundle over $\Sigma$, the corresponding representation $\rho$  satisifies $l_\rho\leq \alpha\cdot l_{\tau_{n}\circ j_\Sigma}$ for some positive constant $\alpha<1$, unless $\mathbb P(\rho)=\mathbb P(\tau_n\circ j_{\Sigma})$ .
\end{cor}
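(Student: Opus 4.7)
The plan is to apply Theorem \ref{NilpotentDominationIntro} directly with $\lambda = (n)$, treating Corollary \ref{RoughDomination} as the specialization of that theorem to the maximal partition. First I would confirm the maximality of $(n)$ in $\mathcal{P}_n$, which the paper already flags just above the corollary: in the standard dominance order on partitions of $n$, the one-part partition $(n)=(n,0,\ldots,0)$ majorizes every other partition $\mu=(\mu_1,\mu_2,\ldots)$, since the partial sums $\mu_1+\cdots+\mu_k$ never exceed $n$. Consequently, any nilpotent polystable $SL(n,\mathbb{C})$-Higgs bundle $(E,\phi)$, whatever its actual Jordan type, is automatically of Jordan type at most $(n)$.

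With that observation in place, the hypotheses of Theorem \ref{NilpotentDominationIntro} are satisfied with $\lambda=(n)$, and the conclusion yields the dichotomy: either there exists a positive constant $\alpha<1$ (depending on $(E,\phi)$) such that $l_\rho \leq \alpha \cdot l_{\tau_n \circ j_\Sigma}$, or $\mathbb{P}(\rho) = \mathbb{P}(\tau_n \circ j_\Sigma)$. This is exactly the content of Corollary \ref{RoughDomination}, so no further argument is required.

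Since the theorem does all of the substantive work, there is no genuine obstacle here; the corollary is a formal consequence. The only ingredient requiring even minimal verification is the maximality of $(n)$ in $\mathcal{P}_n$, a classical fact about the dominance order that the paper itself highlights. I would therefore present the proof in one or two lines, merely citing Theorem \ref{NilpotentDominationIntro} and the maximality of $(n)$.
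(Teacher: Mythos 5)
Your proposal is correct and matches the paper's own treatment exactly: the paper derives Corollary \ref{RoughDomination} by noting that $(n)$ is maximal in $\mathcal{P}_n$ under the dominance order, so every nilpotent polystable Higgs bundle is of Jordan type at most $(n)$, and then invoking Theorem \ref{NilpotentDominationIntro} with $\lambda=(n)$. No further argument is needed.
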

\begin{rem}
The nilpotent cone consisting of nilpotent Higgs bundles is the fiber at $0$ of the Hitchin fibration from the moduli space of Higgs bundles to the vector space $\bigoplus\limits_{j=2}^nH^0(\Sigma, K^i)$. In \cite{DaiLiHitchinFiber}, Dai and the author generalize Corollary \ref{RoughDomination} to  Higgs bundles in Hitchin fibers which contain $n$-Fuchsian Higgs bundles. 
\end{rem}

The entropy of a representation $\rho: \pi_1(\Sigma)\rightarrow SL(n,\mathbb C)$ is defined as \begin{\eq}
h(\rho):=\underset{R\rightarrow \infty}{\text{lim sup }}\frac{\text{log}(\#\{\gamma\in \pi_1(\Sigma)|l_{\rho}(\gamma)\leq R\})}{R}.
\end{\eq} 
An immediate corollary on the entropy of Corollary \ref{RoughDomination} is as follows.
\begin{cor}
For a nilpotent polystable $SL(n,\mathbb{C})$-Higgs bundle $(E,\phi)$ over $\Sigma$, the entropy of its associated representation $\rho$ satisfies if it is finite, then 
$h(\rho)\geq \sqrt{\frac{6}{n(n^2-1)}}.$  

Equality holds if and only if $(E,\phi)$ is the tensor product of $Sym^{n-1}(K^{\frac{1}{2}}\oplus K^{-\frac{1}{2}}, \begin{pmatrix}0&0\\1&0\end{pmatrix})$ with $(L,0)$ where $L$ is a holomorphic line bundle satisfying $L^n=\mathcal O$, in which case, $\mathbb P(\rho)=\mathbb P(\tau_n\circ j_{\Sigma})$. 
\end{cor}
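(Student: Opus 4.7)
The plan is to transfer the translation length comparison from Corollary~\ref{RoughDomination} into an entropy comparison. By Corollary~\ref{RoughDomination}, either $\mathbb{P}(\rho)=\mathbb{P}(\tau_n\circ j_\Sigma)$, or there exists $\alpha\in(0,1)$ such that $l_\rho(\gamma)\leq \alpha\cdot l_{\tau_n\circ j_\Sigma}(\gamma)$ for every $\gamma\in\pi_1(\Sigma)$. In the latter case, the inclusion
\[
\{\gamma\in\pi_1(\Sigma):l_{\tau_n\circ j_\Sigma}(\gamma)\leq R/\alpha\}\subseteq\{\gamma\in\pi_1(\Sigma):l_\rho(\gamma)\leq R\},
\]
after taking cardinalities and passing to $\limsup_{R\to\infty}$ of $\log(\cdot)/R$, yields $h(\rho)\geq \alpha^{-1}\,h(\tau_n\circ j_\Sigma)>h(\tau_n\circ j_\Sigma)$.

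Next I would compute $h(\tau_n\circ j_\Sigma)$ explicitly. The map $\tau_n$ induces a totally geodesic isometric embedding of $SL(2,\mathbb{R})/SO(2)$ into $SL(n,\mathbb{C})/SU(n)$; the ambient metric from $(X,Y)=2\,\mathrm{tr}(XY)$ on $\mathfrak{sl}(n,\mathbb{C})$ pulls back along $d\tau_n$ to $\tfrac{n(n^2-1)}{6}$ times the corresponding metric on $\mathfrak{sl}(2,\mathbb{R})$. This rescaling factor comes from the diagonal element $h=\mathrm{diag}(1,-1)\in\mathfrak{sl}(2,\mathbb{R})$: the image $\tau_n(h)$ has eigenvalues $n-1,\,n-3,\,\ldots,\,-(n-1)$, giving $\mathrm{tr}(\tau_n(h)^2)=n(n^2-1)/3$, compared with $\mathrm{tr}(h^2)=2$. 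Hence translation lengths scale by the square root of this factor:
\[
l_{\tau_n\circ j_\Sigma}(\gamma)=\sqrt{\tfrac{n(n^2-1)}{6}}\cdot l_{j_\Sigma}(\gamma).
\]
A direct sectional curvature calculation verifies that $(X,Y)=2\,\mathrm{tr}(XY)$ realizes $SL(2,\mathbb{R})/SO(2)$ as the hyperbolic plane of constant curvature $-1$, so the Fuchsian representation has entropy $h(j_\Sigma)=1$ and therefore $h(\tau_n\circ j_\Sigma)=\sqrt{6/(n(n^2-1))}$.

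Combining the two cases, the lower bound $h(\rho)\geq \sqrt{6/(n(n^2-1))}$ holds whenever $h(\rho)$ is finite, with strict inequality unless $\mathbb{P}(\rho)=\mathbb{P}(\tau_n\circ j_\Sigma)$. Equality is realized precisely when $\mathbb{P}(\rho)=\mathbb{P}(\tau_n\circ j_\Sigma)$, because the translation length spectrum only depends on $\mathbb{P}(\rho)$. For the classification of this equality case I would invoke Theorem~\ref{NilpotentDominationIntro} applied to the maximal Jordan type $\lambda=(n)$: the hypotheses force $r=1$ and $k_1=1$, yielding
\[
(E,\phi)=Sym^{n-1}\bigl(K^{\frac{1}{2}}\oplus K^{-\frac{1}{2}},\begin{pmatrix}0&0\\1&0\end{pmatrix}\bigr)\otimes (L,0)
\]
for a line bundle $L$ with $L^n=\mathcal{O}$. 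The only non-routine step is matching the metric rescaling factor against the stated constant $\sqrt{6/(n(n^2-1))}$; once this is confirmed, the entropy bound and the equality characterization follow by standard orbit counting together with Theorem~\ref{NilpotentDominationIntro}.
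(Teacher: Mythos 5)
Your proposal is correct and follows exactly the route the paper intends: the paper states this as an ``immediate corollary'' of Corollary \ref{RoughDomination}, and your orbit-counting inclusion, the rescaling factor $\sqrt{n(n^2-1)/6}$ (which is precisely Lemma \ref{LemmaA}(2) with $c=C_{(n)}/2=6/(n(n^2-1))$), and the appeal to Theorem \ref{NilpotentDominationIntro} with $\lambda=(n)$ for the equality case supply exactly the details the paper leaves implicit.
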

\begin{rem}
Hitchin \cite{Hitchin92} defines a section of the Hitchin fibration whose image forms a connected component (the Hitchin component) of the $SL(n,\mathbb{R})$-Higgs bundle moduli space. The corresponding representations are called Hitchin representations. Potrie and Sambarino \cite{HitchinEntropy} showed for any Hitchin representation $\rho$, $h(\rho)\leq\sqrt{\frac{6}{n(n^2-1)}}$ (with an appropriate normalization) and the equality holds only if $\mathbb P(\rho)=\mathbb P(\tau_n\circ j_{\Sigma})$ for some Riemann surface $\Sigma$. We can see that the nilpotent cone possesses an opposite behavior of the Hitchin section.  
\end{rem}

\subsection{Period Domain and Calabi-Yau moduli} 
For a polarized K\"ahler manifold $(X, \omega)$, a deformation family of polarized K\"ahler manifolds $X_t$ over $t\in B$ from $(X,\omega)$, by associating to each fiber $X_t$ the Hodge decomposition of its $k$-th primitive cohomology, we get the period map $\mathcal P: S\rightarrow \Gamma\backslash\mathcal D$, where $\Gamma$ is the monodromy group and $\mathcal D=\mathcal D(H, Q, k, \{h^{p,q}\})$ is the period domian classifying all Hodge structures of weight $k$ with fixed dimension $h^{p,q}$ of $H^{p,q}$, polarized by $Q$. It is a homogeneous space $G/V$ endowed with a canonical $G$-invariant metric which is unique up to a constant multiple. The period map is holomorphic and its tangential map has image in the horizontal distribution $T^h\mathcal D$, shown by Griffiths \cite{Griffiths}. Moreover, Griffiths and Schmid \cite{GriffithsSchmid} show that the horizontal distribution $T^h\mathcal D$ always has negative holomorphic sectional curvature. 

Here we give an effective estimate of the holomorphic sectional curvature of $T^h\mathcal D$ once we fix a choice of metric $h$ on $\mathcal D$ as in Equation (\ref{MetricPeriodDomain}). The estimate only depends on the composition $(h^{k,0},h^{k-1,1},\cdots, h^{0,k})$ of ${\sum_{p=0}^kh^{p,q}}$.

\begin{thm}\label{PeriodDomainCurvatureIntro}(Theorem \ref{PeriodDomainCurvature})
The $G$-invariant Hermitian metric $h$ on $\mathcal D=\mathcal D(H, Q, k, \{h^{p,q}\})$ has holomorphic sectional curvature in the direction $\xi\in T^h\mathcal D$ satisfying 
\[K(\xi)\leq -C_{\mathcal R^t}.\]
where $\mathcal R^t$ is the conjugate partition of $\mathcal R=(h^{k,0},h^{k-1,1},\cdots, h^{0,k})$ as defined in Section \ref{Composition} and the constant $C_{\mathcal R^t}$ is defined in Equation (\ref{ConstantC}). 

Moreover, the equality can be achieved in some direction $\xi$.
\end{thm}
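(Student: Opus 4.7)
My plan is to reduce the curvature bound to the algebraic inequality for nilpotent matrices developed in the first part of the paper. I would first invoke the Griffiths--Schmid framework: writing $\mathfrak g_{\mathbb C}=\bigoplus_r\mathfrak g^{-r,r}$ for the induced Hodge decomposition of the Lie algebra at a reference point $x\in\mathcal D$, the horizontal tangent space $T^h_x\mathcal D$ is identified with $\mathfrak g^{-1,1}$, and the $G$-invariant Hermitian metric $h$ of (\ref{MetricPeriodDomain}) restricts to a positive definite pairing on this subspace. For $\xi\in\mathfrak g^{-1,1}$ the Griffiths--Schmid curvature formula has the shape
\[K(\xi)=-\frac{N(\xi)}{\|\xi\|^4},\]
where $N(\xi)$ is a positive quadratic functional built from $[\xi,\bar\xi]\in\mathfrak g^{0,0}$ and its action on $\mathfrak g^{-1,1}$; in matrix terms this becomes a trace polynomial in $\xi$ and the Hermitian adjoint $\xi^\ast$. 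Thus the theorem reduces to showing the uniform lower bound $N(\xi)\geq C_{\mathcal R^t}\,\|\xi\|^4$.

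Second, under the decomposition
\[T^h_x\mathcal D\;\cong\;\bigoplus_{p=1}^{k}\operatorname{Hom}(H^{p,k-p},H^{p-1,k-p+1}),\]
fixing orthonormal Hodge bases realizes $\xi$ as a single nilpotent endomorphism of $H=\bigoplus_p H^{p,k-p}$ whose matrix is strictly block-subdiagonal with respect to the composition $\mathcal R=(h^{k,0},\ldots,h^{0,k})$. Any such nilpotent has Jordan type dominated by the conjugate partition $\mathcal R^t$, which is the Jordan type of a generic block-subdiagonal nilpotent with these block sizes. The commutator $[\xi,\xi^\ast]$ is block-diagonal, and $N(\xi)$ becomes exactly the algebraic functional on the orbit of $\xi$ studied in the earlier sections, so the algebraic inequality yields $N(\xi)/\|\xi\|^4\geq C_{\mathcal R^t}$ with the constant as in (\ref{ConstantC}).

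For sharpness I would exhibit $\xi$ as the lowering operator of an $\mathfrak{sl}_2$-triple $\{e,h_0,f\}$ inside $\mathfrak g^{-1,1}\oplus\mathfrak g^{0,0}\oplus\mathfrak g^{1,-1}$, with semisimple element $h_0$ a scalar multiple of the Hodge grading operator; the resulting $\xi$ has Jordan type precisely $\mathcal R^t$ and saturates every step of the algebraic inequality. The main obstacle I anticipate is aligning the normalization constants between the $G$-invariant metric (\ref{MetricPeriodDomain}) and the trace pairing used in the algebraic inequality so that the exact constant $C_{\mathcal R^t}$ emerges, and, as a secondary point, verifying that the horizontality constraint (forcing $\xi$ to shift Hodge type by exactly $(-1,1)$ rather than being an arbitrary block-subdiagonal matrix) does not artificially lift the algebraic minimum above $C_{\mathcal R^t}$---equivalently, that the principal $\mathfrak{sl}_2$ associated to $\mathcal R^t$ genuinely embeds into $\mathfrak g^{-1,1}\oplus\mathfrak g^{0,0}\oplus\mathfrak g^{1,-1}$.
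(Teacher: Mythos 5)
Your proposal follows essentially the same route as the paper: reduce the Griffiths--Schmid curvature formula in a Hodge frame to $K(\xi)=-\|[X,X^\ast]\|^2/\|X\|^4$ for $X$ block-subdiagonal of type $\mathcal R$, bound the Jordan type by $\mathcal R^t$, invoke the algebraic inequality (Propositions \ref{JordanInequality} and \ref{CompositionEstimate}), and realize equality by an $\mathfrak{sl}_2$ lowering operator whose membership in $\mathfrak g_{\mathbb C}$ is guaranteed by the symmetry $h^{p,q}=h^{q,p}$ of the composition (Remark \ref{Symmetry}). The obstacle you flag at the end is exactly the point the paper resolves in its final paragraph, so the proposal is correct and matches the paper's argument.
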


A polarized Calabi-Yau $m$-manifold is a pair $(X,\omega)$ of a compact algebraic manifold $X$ of dimension $m$ with vanishing first Chern class and a K\"ahler form $\omega\in H^2(X,\mathbb{Z})$. The universal deformation space $\mathcal{M}_X$ of polarized Calabi-Yau $m$-manifolds is smooth, shown by Tian \cite{Tian}. The tangent space $T_{X'}\mathcal{M}_X$ of $\mathcal{M}_X$ at $X'$ can be identified with $H^1(X', T_{X'})$. Denote $n=\dim_{\mathbb C} H^1(X,T_X)= h^{1,m-1}$, where $h^{p,m-p}$ is the dimension of the $(p,m-p)$-primitive cohomology group of $(X,\omega)$. So $h^{m,0}=h^{0,m}=1, h^{m-1,1}=h^{1,m-1}=n.$

We then have the associated period map $\mathcal P: \mathcal M_X\rightarrow \Gamma\backslash\mathcal D$. The pullback metric $\mathcal P^*h$ on $\mathcal M_X$ by the period map is called the Hodge metric, first defined by Lu \cite{Lu}. There is a close relation between the Weil-Petersson metric and the Hodge metric for deformation space of Calabi-Yau manifolds as stated in Proposition \ref{RelationTwoMetrics}.  

An important application of Theorem \ref{PeriodDomainCurvatureIntro} is the following estimate of the holomorphic sectional curvatures of the Hodge metric on $\mathcal M_X$.
\begin{thm}(see Section \ref{CalabiYauModuli})
For a polarized Calabi-Yau $m$-manifold $(X,\omega)$, let $n$ be the dimension of the universal deformation space $\mathcal M_X$. Then the Hodge metric over $\mathcal M_X$ has its holomorphic sectional curvature bounded from above by a negative constant $$c_m=-C_{\mathcal R^t},$$ where $\mathcal R^t$ is the conjugate partition of $\mathcal R=(h^{m,0},h^{m-1,1},\cdots, h^{0,m})$ as defined in Section \ref{Composition} and the constant $C_{\mathcal R^t}$ is defined in Equation (\ref{ConstantC}). 

In particular, \\
(1) $c_3=-\frac{2}{n+9}$.\\
(2) $c_4=-\frac{1}{2(\min\{a,n\}+4)}$ for $a=h^{2,2}$.\\
(3) $c_5=-\frac{2}{(9\min\{a,n\}+a+25)}$ for $a=h^{3,2}$.
\end{thm}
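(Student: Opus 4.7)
The strategy is to reduce the claim on $\mathcal M_X$ to Theorem \ref{PeriodDomainCurvatureIntro} via the period map. By definition the Hodge metric is the pullback $\mathcal P^* h$, and for a polarized Calabi-Yau manifold local Torelli ensures that $\mathcal P:\mathcal M_X\to\Gamma\backslash\mathcal D$ is a holomorphic immersion, so $(\mathcal M_X,\mathcal P^*h)$ embeds isometrically as a complex submanifold. By Griffiths transversality, $d\mathcal P$ takes values in the horizontal distribution $T^h\mathcal D$, which is precisely where the estimate of Theorem \ref{PeriodDomainCurvatureIntro} is available.

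Next I would invoke the standard second-fundamental-form computation (compare Lu's original treatment of the Hodge metric \cite{Lu}): for any $v\in T\mathcal M_X$,
\[
K_{\mathcal M_X}(v)\;\leq\;K_{\mathcal D}\bigl(d\mathcal P(v)\bigr),
\]
the defect being a nonnegative squared norm of the second fundamental form of $\mathcal P$. Combined with Theorem \ref{PeriodDomainCurvatureIntro} this produces the general bound $K_{\mathcal M_X}(v)\leq -C_{\mathcal R^t}$ for $\mathcal R=(h^{m,0},h^{m-1,1},\ldots,h^{0,m})$.

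To obtain the explicit rationals in (1)--(3), I would specialize the Calabi-Yau Hodge profile: using $h^{m,0}=h^{0,m}=1$, $h^{m-1,1}=h^{1,m-1}=n$, and Hodge symmetry $h^{p,q}=h^{q,p}$, one has $\mathcal R=(1,n,n,1)$, $(1,n,a,n,1)$, $(1,n,a,a,n,1)$ for $m=3,4,5$ respectively (with $a=h^{2,2}$ in the four-fold case and $a=h^{3,2}$ in the five-fold case). One then forms the conjugate partition $\mathcal R^t$ and evaluates the defining expression for $C_{\mathcal R^t}$ from Equation (\ref{ConstantC}). The principal obstacle is computational rather than conceptual: $C_{\mathcal R^t}$ is a weighted combinatorial sum over the boxes of the Young diagram of $\mathcal R^t$ descending from the algebraic nilpotent inequality at the heart of the paper, so matching it against $2/(n+9)$, $1/(2(\min\{a,n\}+4))$, and $2/(9\min\{a,n\}+a+25)$ requires careful bookkeeping, including the sub-case split $a\leq n$ versus $a>n$ responsible for the $\min\{a,n\}$ factors.
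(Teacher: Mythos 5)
Your proposal matches the paper's own (first) proof of Theorem \ref{HodgeMetric}: reduce to Theorem \ref{PeriodDomainCurvature} by noting that $\mathcal P$ is a horizontal holomorphic immersion and that holomorphic sectional curvature does not increase on the image, then specialize $\mathcal R=(1,n,n,1)$, $(1,n,a,n,1)$, $(1,n,a,a,n,1)$ and evaluate $C_{\mathcal R^t}$ with the $a\leq n$ versus $a>n$ split. The paper also records an equivalent second argument via the associated $\mathbb C$-VHS and Corollary \ref{CVHSCurvature}, but your route is exactly its main one.
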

\begin{rem}
(1) By the Schwarz-Yau lemma, an optimal bound of holomorphic sectional curvatures will give estimates of the Weil-Petersson metric on a complete algebraic curve inside the moduli space $\mathcal M_X$, e.g. Theorem 6.1 in Lu \cite{Lu}. As additional applications, such estimates can give refined Arakelov-type inequalities, e.g. Theorem 0.6 in Liu-Xia \cite{LiuXia}.\\
(2) For $m=3$, Lu in \cite{Lu} gave the upper bound $-\frac{1}{(\sqrt{n}+1)^2+1}$. Here we improve to the upper bound $-\frac{2}{n+9}$ and show that it is the optimal upper bound in the algebraic sense.\\
(3) For $m=4$, Lu-Sun \cite{LuSun} in Theorem 1.2 showed the upper bound is $-\frac{1}{2(n+4)}$. Here we obtain a refined upper bound by replacing $n$ by $\min\{a, n\}$ where $a=h^{2,2}$.\\
(4) For $m=5$ and higher, our estimates are new.
\end{rem}

\subsection*{Organization of the paper} In Section \ref{AlgebraicInequalities}, we prove some algebraic inequalities for nilpotent matrices and matrices of type $\mathcal R\in \mathcal C_n$. Then we apply the algebraic inequalities to nilpotent Higgs bundles over complex manifolds in Section \ref{GeneralComplexManifolds}. In Section \ref{RiemannSurface}, we obtain topological information for nilpotent Higgs bundles over Riemann surfaces. In Section \ref{PeriodDomainModuli}, we discuss how the algebraic estimate for matrices of type $\mathcal R\in C_n$ applies to the study of curvature estimates of the period domain and the deformation space of Calabi-Yau manifolds. 

\subsection*{Acknowledgement}This paper is inspired by the paper of Xu Wang \cite{XuWang}. The author wants to thank Nicolas Tholozan for inspiring and helpful discussions and Richard Wentworth for referring to her the study on the classifying space of Hodge structures. The author acknowledges support from Nankai Zhide Foundation.

\section{Algebraic Inequalities}\label{AlgebraicInequalities}
We first review some basic knowledge on the relation between partitions and nilpotent orbits, one can refer to Section 3.1 in the book of Collingwood and McGovern \cite{CollingwoodMcGovern}. A \textbf{partition} of $n$ is a weakly decreasing array $(\lambda_i)$ of integers  $\lambda_1\geq \lambda_2\geq \cdots\geq \lambda_n$
satisfying $\lambda_i\geq 0, \sum\limits_{p=1}^n\lambda_p=n$. Sometimes we omit the zeros to represent $\lambda$. Denote by $\mathcal{P}_n$ the space of all partitions of $n$. We sometimes use exponential notation for partitions. For example, $(4,3^2,2,1^2)=(4,3,3,2,1,1)$.

Let $\lambda\in \mathcal P_n$ and $\mu\in \mathcal P_{m}$, then define $\lambda\cup\mu$ to be the partition of $m+n$ whose parts are those of $\lambda$ and $\mu$, arranged in non-increasing order. 

The space $\mathcal{P}_n$ has a natural partial ordering, called the dominance ordering. If $\lambda,\mu$ are two distinct partitions, the partition $\lambda$ is said to \textbf{dominate} $\mu$ ($\lambda\geq\mu$) if for all $p\leq n$, $\sum\limits_{i=1}^p\lambda_i\geq\sum\limits_{i=1}^p\mu_i.$ For example, in case $n=4$, $(4)>(3,1)>(2,2)> (2,1,1)> (1,1,1,1)$.

\begin{lem}\label{DisjointUnionDominance}
Given $\lambda_1, \mu_1\in \mathcal P_n$ and $\lambda_2, \mu_2\in \mathcal P_m$ satisfying $\lambda_1\leq \lambda_2$ and $\mu_1\leq \mu_2$, then as elements of $\mathcal P_{n+m}$,  $\lambda_1\cup\lambda_2\leq \mu_1\cup\mu_2$.
\end{lem}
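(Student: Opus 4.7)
The plan is to compare the top-$k$ partial sums of $\lambda_1\cup\lambda_2$ and $\mu_1\cup\mu_2$ directly, exploiting a max characterization of partial sums under merging of two decreasing sequences. (I read the hypothesis as $\lambda_1\leq\mu_1$ in $\mathcal P_n$ and $\lambda_2\leq\mu_2$ in $\mathcal P_m$, since $\mathcal P_n$ and $\mathcal P_m$ sit in different posets.)

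First I would establish the following merging identity: for partitions $\alpha\in\mathcal P_n$ and $\beta\in\mathcal P_m$ and any $k\leq n+m$,
\[
\sum_{i=1}^{k}(\alpha\cup\beta)_i \;=\; \max_{\substack{a+b=k\\ 0\leq a\leq n,\ 0\leq b\leq m}}\Bigl(\sum_{i=1}^{a}\alpha_i+\sum_{i=1}^{b}\beta_i\Bigr).
\]
This is just the statement that, because each of $\alpha$ and $\beta$ is already sorted in nonincreasing order, the $k$ largest parts in the merged list $\alpha\cup\beta$ must be the top $a^{\ast}$ of $\alpha$ together with the top $b^{\ast}=k-a^{\ast}$ of $\beta$ for some optimal split $(a^{\ast},b^{\ast})$; any other choice of $k$ parts can be improved by an exchange.

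Second, fixing $k$ and choosing $(a^{\ast},b^{\ast})$ that realizes the max on the $\lambda$ side, I chain the inequalities
\[
\sum_{i=1}^{k}(\lambda_1\cup\lambda_2)_i
=\sum_{i=1}^{a^{\ast}}(\lambda_1)_i+\sum_{i=1}^{b^{\ast}}(\lambda_2)_i
\leq \sum_{i=1}^{a^{\ast}}(\mu_1)_i+\sum_{i=1}^{b^{\ast}}(\mu_2)_i
\leq \sum_{i=1}^{k}(\mu_1\cup\mu_2)_i,
\]
where the first inequality uses the dominance hypotheses $\lambda_1\leq\mu_1$ and $\lambda_2\leq\mu_2$ applied to partial sums of length $a^{\ast}$ and $b^{\ast}$ separately, and the second is the max characterization applied to $\mu_1\cup\mu_2$. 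Since the total sums agree, $|\lambda_1\cup\lambda_2|=n+m=|\mu_1\cup\mu_2|$, this is exactly $\lambda_1\cup\lambda_2\leq\mu_1\cup\mu_2$ in dominance order.

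There is no serious obstacle; the only real content is the max characterization of merged partial sums, which is a one-line sorting exchange argument. An even slicker alternative proceeds via the conjugate partition: one has the coordinatewise identity $(\alpha\cup\beta)^t=\alpha^t+\beta^t$, and dominance reverses under conjugation, so $\mu_i^t\leq\lambda_i^t$ for $i=1,2$; adding the two partial-sum inequalities yields $(\mu_1\cup\mu_2)^t\leq(\lambda_1\cup\lambda_2)^t$, which is equivalent to the claim. Either route avoids any case analysis.
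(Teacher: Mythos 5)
Your argument is correct. Note first that the paper states this lemma without giving any proof at all, so there is nothing to compare against; your write-up fills a genuine gap in the exposition. You were also right to read the hypotheses as $\lambda_1\leq\mu_1$ in $\mathcal P_n$ and $\lambda_2\leq\mu_2$ in $\mathcal P_m$: as printed, ``$\lambda_1\leq\lambda_2$'' compares partitions of different integers and is surely a typo for the pairing you used. Both of your routes work. The max characterization of merged partial sums, $\sum_{i=1}^{k}(\alpha\cup\beta)_i=\max_{a+b=k}\bigl(\sum_{i=1}^{a}\alpha_i+\sum_{i=1}^{b}\beta_i\bigr)$, is the standard merge/exchange fact and the chain of inequalities that follows is airtight, since the total sums on both sides equal $n+m$. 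The conjugation route is arguably the more natural one in the context of this paper: the coordinatewise identity $(\alpha\cup\beta)^t=\alpha^t+\beta^t$ is immediate from $\lambda^t_j=|\{i\mid\lambda_i\geq j\}|$, and the order reversal under conjugation is exactly Lemma \ref{Duality}(ii), which the paper already has on hand; adding the partial-sum inequalities for $\mu_1^t\leq\lambda_1^t$ and $\mu_2^t\leq\lambda_2^t$ gives $(\mu_1\cup\mu_2)^t\leq(\lambda_1\cup\lambda_2)^t$ and hence the claim. Either version could be inserted verbatim as the missing proof.
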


Given a positive integer $i$, an elementary Jordan block of type $i$ is an $i\times i$-matrix as follows:
\begin{\eq}
J_i=\begin{pmatrix}
0&0&0&\cdots&0&0\\
r_1&0&0&\cdots&0&0\\
0&r_2&0&\cdots&0&0\\
\vdots&\vdots&\vdots&\ddots&\vdots&\vdots\\
0&0&0&\cdots&0&0\\
0&0&0&\cdots&r_{i-1}&0\\
\end{pmatrix}, \end{\eq} where $r_p=\sqrt{p(i-p)}$.
 Given a partition $\lambda\in \mathcal P(n)$, let $k$ be the largest index such that $\lambda_k>0$ and define $X^{\lambda}=diag(J_{\lambda_1},\cdots, J_{\lambda_k})$. The matrix $X^{\lambda}$ is clearly a nilpotent element of $sl(n,\mathbb C)$. Note that $\{X^{\lambda}, (X^{\lambda})^*, [X^{\lambda}, (X^{\lambda})^*]\}$ forms a $sl(2,\mathbb C)$-triple.

 Define the nilpotent orbit $\mathcal O^{\lambda}=SL(n, \mathbb C)\cdot X^{\lambda}.$ There is a one-to-one correspondence between the set of nilpotent orbits in $sl(n,\mathbb C)$ and the set $\mathcal P(n)$ of partitions of $n$. The correspondence sends a nilpotent matrix $X$ to the partition determined by the block sizes in its Jordan normal form.

\subsection{Nilpotent orbits and the moment map}

Recall the Cartan decomposition of $sl(n,\mathbb C)$ is $sl(n,\mathbb C)=su(n)\oplus i\cdot su(n)$ and the Cartan involution is $\sigma(X)=-X^*$, where $X^*=\overline X^T$. Using the rescaled Killing form $B(X, Y)=tr(XY)$ on $sl(n,\mathbb R)$ and the Cartan involution, we then have an $SU(n)$-invariant Hermitian inner product on $sl(n,\mathbb C)$ by
\begin{equation}
(X, Y)=-B(X, \sigma(Y))=tr(XY^*),\quad \text{for} ~X, Y\in sl(n,\mathbb C).
\end{equation} As usual, $||X||^2$ denotes $(X,X)$.
Ness in \cite{Ness} defined a map $m: sl(n, \mathbb C)\rightarrow i\cdot su(n)$ implicitly by the equation 
\begin{equation}
(m(\xi),\eta)=\frac{1}{2||\xi||^2}(\frac{d}{dt}||Ad(exp(t\eta)\xi||^2)|_{t=0}\quad \text{for $\xi, \eta\in sl(n,\mathbb C)$},
\end{equation} which measures the change of the square norm of a vector under the adjoint action. The explicit formula of $m$ is \begin{equation}m(A)=\frac{[A, A^*]}{||A||^2}. \end{equation}
The map $m$ is invariant under scaling by $\mathbb C^*$ and hence descends to a map from $\mathbb P(sl(n,\mathbb C))$, still denote by $m$. The $SU(n)$-invariant Hermitian metric on $sl(n,\mathbb C)$ induces the natural Fubini-Study metric on the quotient space $\mathbb P(sl(n,\mathbb C))$ whose $(1,1)$-form gives a symplectic structure. Clearly the induced action of $SU(n)$ on $\mathbb P(sl(n,\mathbb C))$ preserves the symplectic structure. Observe that $m(A)$ is $SU(n)$-equivariant. Moreover, Ness in \cite{Ness} showed that $i\cdot m: sl(n,\mathbb C)\rightarrow su(n)$ is a moment map for the induced action of $SU(n)$ on $\mathbb P(sl(n,\mathbb C))$.

We consider the function $K: sl(n,\mathbb C)\rightarrow \mathbb R$ given by
\begin{equation}\label{DefinitionK}K(A)=||m(A)||^2=\frac{||[A, A^*]||^2}{||A||^4},\end{equation} the square norm of $m$.

Denote by $\mathcal N$ the space of nilpotent matrices inside $sl(n,\mathbb C)$ and by $\mathcal O_A$ the adjoint orbit of $A\in sl(n, \mathbb C)$. We will make use of the following important lemma.\begin{lem}(Theorem 6.1 and 6.2 in Ness \cite{Ness} and Lemma 2.11 in Schmid-Vilonen \cite{SchmidVilonen})\label{NessLemma}
A point $A\in \mathcal O_A$ is a critical point of the function $A\longmapsto K(A)$ if and only if there exists a real number $a$, $a<0$, such that 
\begin{equation}\label{Rigidity}
[[A, A^*], A]=aA, \quad\text{and}\quad [[A, A^*], A^*]=-aA^*.
\end{equation}
The set of critical points is non-empty and consists of a single $SU(n)\times \mathbb C^*$-orbit. Moreover, the function $K$ on $\mathcal O_A$ assumes its minimum value exactly on the critical set. 
\end{lem}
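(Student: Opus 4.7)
The plan is to derive the critical-point equation by a direct derivative computation and then combine it with the Kempf-Ness / Kirwan-Ness framework to extract existence, uniqueness modulo $SU(n)\times\mathbb{C}^*$, and minimality. Since $K$ is scaling- and $SU(n)$-invariant, the problem descends to the $SU(n)$-quotient of $\mathbb{P}(\mathcal O_A)$, and the moment-map formulation identifies critical points of $K$ with critical points of the squared norm of the moment map.

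First, I would compute $\frac{d}{dt}\big|_{t=0}K(Ad(e^{t\eta})A)$ for arbitrary $\eta\in sl(n,\mathbb{C})$. Using $\dot A = [\eta,A]$, $\dot{A}^* = -[\eta^*,A^*]$ together with the cyclic trace identity $tr(P[Q,R]) = tr([P,Q]R)$, one obtains
\[
\tfrac{d}{dt}\big|_0\|A\|^2 = 2\,\text{Re}\,tr(\eta H),\qquad \tfrac{d}{dt}\big|_0\|H\|^2 = 4\,\text{Re}\,tr\bigl(\eta[A,[A^*,H]]\bigr),
\]
where $H=[A,A^*]$. Setting $dK|_A=0$ along all tangent directions, and using that replacing $\eta$ by $i\eta$ kills the imaginary part as well, reduces the condition via the nondegeneracy of the trace pairing on $sl(n,\mathbb{C})$ to the single identity $\|A\|^2[A,[A^*,H]] = \|H\|^2 H$. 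Taking $a$ to be the appropriate real scalar proportional to $\|H\|^2/\|A\|^2$ (with the sign dictated by the moment-map convention), one checks that this identity is equivalent to the pair of $sl_2$-triple relations $[[A,A^*],A]=aA$ and $[[A,A^*],A^*]=-aA^*$; the $*$-adjoint of either relation recovers the other because $H^*=H$ and $a\in\mathbb{R}$.

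Next, for existence and uniqueness of critical points, I would invoke Jacobson-Morozov: any nonzero $X\in\mathcal{O}_A$ embeds in some $sl_2$-triple inside $sl(n,\mathbb{C})$, and by a Mostow-type argument there is $g\in SL(n,\mathbb{C})$ conjugating this triple to one compatible with the Cartan involution $\sigma(W)=-W^*$, so that the lowering operator becomes the Hermitian adjoint of the raising operator. The conjugate $Ad(g)A$ then automatically satisfies the critical-point equation. Uniqueness of Jacobson-Morozov triples up to $SL(n,\mathbb{C})$-conjugation, combined with uniqueness of the Cartan involution up to $SU(n)$-conjugation, yields that the critical set forms a single $SU(n)\times\mathbb{C}^*$-orbit.

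Finally, for the minimality statement, I would appeal to the Kempf-Ness convexity principle: along any complexified one-parameter subgroup $t\mapsto Ad(e^{itX})A$ with $X\in i\cdot su(n)$, $\log K$ is a convex function of $t$, and $SU(n)$-invariance disposes of the remaining directions, so every critical point is automatically a global minimum. The main obstacle I anticipate is the existence step in the nilpotent setting: the orbit $\mathcal{O}_A$ is not closed in $sl(n,\mathbb{C})$, its closure contains $0$, and one must confirm that the infimum of $K$ on $\mathcal{O}_A$ is attained on $\mathcal{O}_A$ itself rather than only approached along sequences degenerating to a smaller boundary orbit. Resolving this is equivalent to verifying that $\mathcal{O}_A$ is Kempf-Ness polystable, which is the technical heart of Ness's stratification theorem and the reason the clean statement requires real work beyond the routine derivative computation.
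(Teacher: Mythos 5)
First, a point of reference: the paper does not prove this lemma at all --- it is quoted verbatim from Ness (Theorems 6.1, 6.2) and Schmid--Vilonen (Lemma 2.11), with an independent proof deferred to \cite{DaiLiHitchinFiber}. So your attempt is necessarily a different route, and the question is whether it would actually go through. The first two-thirds of your sketch are sound. The derivative computation is correct, and the resulting critical-point condition is the single equation $\|A\|^2[A,[A^*,H]]=\|H\|^2H$ with $H=[A,A^*]$. Your claim that this is equivalent to the pair of $sl_2$-relations is true, but the implication from the single equation to $[H,A]=aA$ is not a formality and you should not leave it at ``one checks'': it follows by expanding $\|[H,A]-aA\|^2=\|[H,A]\|^2-2a\,tr(H[A,A^*])+a^2\|A\|^2$ and using $\|[H,A]\|^2=tr(H[A,[A^*,H]])=\|H\|^4/\|A\|^2$ at a critical point, which forces the norm to vanish for $a=\|H\|^2/\|A\|^2$. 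Note that this computation pins down $a=\|H\|^2/\|A\|^2>0$, whereas the lemma as stated has $a<0$; with the paper's lower-triangular normal form $X^\lambda$ one indeed gets $[[J_2,J_2^*],J_2]=2J_2$, so the sign in the statement is a convention artifact you would need to reconcile rather than wave at. The existence and uniqueness step via Jacobson--Morozov, conjugation to a triple adapted to the Cartan involution (Mostow/Kostant--Sekiguchi), and Kostant's conjugacy theorem for triples is the standard argument and is fine.

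The genuine gap is the minimality step. The assertion that $\log K$ is convex along $t\mapsto Ad(e^{itX})A$ is not a standard fact and is not justified; Kempf--Ness convexity applies to the function $g\mapsto\log\|g\cdot v\|^2$, not to the norm-square of the moment map, and $\|m\|^2$ is in general a Morse--Bott-type function with critical points of all indices, so one cannot conclude ``critical implies global minimum'' from convexity. The sources you would be replacing argue instead via the negative gradient flow of $\|m\|^2$ (whose gradient $2(im(x))_X$ is tangent to each orbit) and the Kirwan--Ness stratification. A more elementary route, well adapted to this paper, is induction over the closure order of nilpotent orbits: $K$ descends to the compact set $\mathbb P(\overline{\mathcal O_A})$, its minimum there is attained, and if it were attained on a boundary orbit $\mathcal O_\mu$ with $\mu<\lambda$ it would equal $C_\mu>C_\lambda=K(X^\lambda)$ by the inductive hypothesis and Lemma \ref{OrderOfPartition}, a contradiction; hence the minimum is attained at an interior critical point, which by your first two steps lies on the single $SU(n)\times\mathbb C^*$-orbit. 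Relatedly, your closing paragraph misplaces the difficulty: existence of critical points is the easy, constructive part (the adapted triple), and the issue is emphatically not ``polystability'' of $\mathcal O_A$ --- nilpotent orbits are never closed and $m$ never vanishes on them, since $[A,A^*]=0$ would make a nilpotent $A$ normal and hence zero. The hard content is precisely the minimality assertion you tried to dispatch with convexity.
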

\begin{rem}
Dai and the author in \cite{DaiLiHitchinFiber} prove a generalized theorem of Lemma \ref{NessLemma} and give an independent proof of Lemma \ref{NessLemma} as a byproduct. 
\end{rem}

For each $\lambda=(\lambda_1,\cdots,\lambda_n)\in \mathcal{P}_n$, we associate a constant \begin{equation}\label{ConstantC}C_{\lambda}:=K(X^{\lambda})=\frac{4}{\sum\limits_{p=1}^n\sum\limits_{s=1}^{\lambda_p}(\lambda_p+1-2s)^2}=\frac{12}{\sum\limits_{p=1}^k\lambda_p(\lambda_p^2-1)}.\end{equation}

For a nilpotent matrix $A\in sl(n,\mathbb C)$ and $\mu\in \mathcal P_n$, we say it is of \textbf{Jordan type} $\mu$ if the block sizes of $A$'s Jordan normal form give the partition $\mu$ of $n$; we say it is of Jordan type at most $\mu$ if the block sizes of $A$'s Jordan normal form give the partition $\lambda$ where $\lambda\leq \mu$.
\begin{prop}\label{JordanInequality}
Suppose $A\in \mathcal N$ is of Jordan type at most $\lambda\in \mathcal{P}_n$, then
$$K(A)\geq C_{\lambda}, $$
and equality holds if and only $A$ is $SU(n)$-conjugate to $c\cdot X^{\lambda}$, for some constant $c\in\mathbb C^*$.
\end{prop}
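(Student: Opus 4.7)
The strategy is to apply Ness's lemma (Lemma~\ref{NessLemma}) inside each fixed nilpotent orbit, and then to compare the resulting constants $C_\mu$ as $\mu$ ranges over partitions with $\mu\leq\lambda$. First I would reduce to the case where $A$ has \emph{exact} Jordan type $\mu$ for some $\mu\leq\lambda$, so that $A\in\mathcal{O}^\mu$; the goal then becomes $K(A)\geq C_\mu\geq C_\lambda$, with the correct characterization of the equality case.

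For the first inequality, the representative $X^\mu\in\mathcal{O}^\mu$ is a critical point of $K|_{\mathcal{O}^\mu}$: since $\{X^\mu,(X^\mu)^*,[X^\mu,(X^\mu)^*]\}$ is an $\mathfrak{sl}(2,\mathbb{C})$-triple, a short bracket computation produces the rigidity equations (\ref{Rigidity}), with the sign arranged (if necessary) by replacing $X^\mu$ by an element of its $SU(n)\times\mathbb{C}^*$-orbit, an operation which leaves $K$ invariant. Lemma~\ref{NessLemma} then gives
\[
\min_{\mathcal{O}^\mu} K \;=\; K(X^\mu) \;=\; C_\mu,
\]
attained precisely on the $SU(n)\times\mathbb{C}^*$-orbit of $X^\mu$, i.e.\ on matrices $SU(n)$-conjugate to some $c\cdot X^\mu$ with $c\in\mathbb{C}^*$. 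Hence $K(A)\geq C_\mu$, with equality iff $A$ is $SU(n)$-conjugate to some $c\cdot X^\mu$.

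For the second inequality, the explicit formula $C_\nu = 12/\sum_p(\nu_p^3-\nu_p)$ together with $\sum_p\nu_p=n$ reduces $C_\mu\geq C_\lambda$ to $\sum_p\mu_p^3\leq\sum_p\lambda_p^3$, with strict inequality when $\mu\neq\lambda$. This is the strict Schur-monotonicity of the cubic power sum and follows from strict convexity of $t\mapsto t^3$: any $\mu\prec\lambda$ can be reached from $\lambda$ by a finite sequence of Robin Hood transfers $(a,b)\mapsto(a-c,b+c)$ with $a>b$, $c>0$, each of which strictly decreases $\sum_p t^3$. (An alternative avoiding the combinatorial input: the orbit-closure relation $\mathcal{O}^\mu\subset\overline{\mathcal{O}^\lambda}$ allows one to pick $A_k\in\mathcal{O}^\lambda$ with $A_k\to X^\mu$, and then $C_\mu=\lim_k K(A_k)\geq C_\lambda$ by the previous step and continuity of $K$ on $sl(n,\mathbb{C})\setminus\{0\}$.) Combining the two inequalities yields $K(A)\geq C_\mu\geq C_\lambda$, and equality forces simultaneously $\mu=\lambda$ and $A\sim_{SU(n)} c\cdot X^\lambda$ for some $c\in\mathbb{C}^*$.

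The main obstacle is bookkeeping rather than anything conceptually deep: getting the sign of $a$ in the rigidity equation right so that $X^\mu$ itself (and not its adjoint) is identified as the minimizer inside its $SU(n)\times\mathbb{C}^*$-orbit, and confirming strictness in the Schur-convexity comparison. Both are short verifications; everything else is a direct reading-off of Ness's theorem and classical convex analysis on partitions.
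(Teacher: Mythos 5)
Your argument is correct and follows essentially the same route as the paper: apply Lemma \ref{NessLemma} on the orbit $\mathcal O^\mu$ of the exact Jordan type $\mu\leq\lambda$ to get $K(A)\geq K(X^\mu)=C_\mu$ with equality exactly on the $SU(n)\times\mathbb C^*$-orbit of $X^\mu$, then use strict monotonicity of $\nu\mapsto C_\nu$ under dominance to conclude $C_\mu\geq C_\lambda$ and pin down the equality case. The only divergence is that you prove the monotonicity directly via strict Schur-convexity of $t\mapsto t^3$ (a clean, self-contained substitute), whereas the paper invokes Lemma \ref{OrderOfPartition}, which routes through conjugate partitions and the constants $D_{\mathcal R}$.
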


\begin{proof}
Apply Lemma \ref{NessLemma} to our case that $A$ is nilpotent of Jordan type $\mu\in \mathcal{P}_n$ for some $\mu\leq \lambda$. Since $X^{\lambda}$ satisfies Equation (\ref{Rigidity}), we obtain that all the minimum points are $SU(n)$-conjugate to $c\cdot X^{\mu}$ for some constant $c\in \mathbb C^*$ and hence $K(A)\geq K(X^{\mu})=C_{\mu}.$ From the monotonicity in Lemma \ref{OrderOfPartition}, we have $C_{\mu}\geq C_{\lambda}$ and hence $K(A)\geq C_{\lambda}.$ The rigidity also follows easily.
\end{proof}
Note that among $\mathcal{P}_n$, $\lambda=(n)$ is the absolute maximum. Therefore, we have an immediate corollary of Proposition \ref{JordanInequality}.
 \begin{prop}
For every $A\in \mathcal N$, we have 
$$K(A)\geq C_{(n)}=\frac{12}{n(n^2-1)}, $$
and equality holds if and only if $A$ is $SU(n)$-conjugate to $c\cdot J_n$, for some constant $c\in \mathbb C^*$.
 \end{prop}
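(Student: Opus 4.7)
The plan is to deduce this statement as a direct corollary of Proposition \ref{JordanInequality} together with the observation that the partition $(n)$ is the unique maximal element of $\mathcal{P}_n$ under the dominance order. The only thing that really needs to be checked is that the hypothesis of Proposition \ref{JordanInequality} is automatically satisfied, and that the associated standard model $X^{(n)}$ coincides with $J_n$.

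First I would observe that any nilpotent $A\in sl(n,\mathbb{C})$ has a well-defined Jordan type $\mu\in\mathcal{P}_n$ given by the block sizes of its Jordan normal form. Since $(n)$ dominates every partition of $n$ (a quick check: for every $p\leq n$, $\sum_{i=1}^p\mu_i \leq n = \sum_{i=1}^p (n)_i$ where $(n)_1 = n, (n)_i = 0$ for $i \geq 2$, so the partial sums of $(n)$ equal $n$ from $p=1$ onward), we conclude $\mu\leq (n)$. Thus $A$ is automatically of Jordan type at most $(n)$.

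Next I would apply Proposition \ref{JordanInequality} with $\lambda = (n)$. This immediately yields $K(A)\geq C_{(n)}$. Plugging $\lambda = (n)$ into the defining formula \eqref{ConstantC} for $C_\lambda$ gives $C_{(n)} = 12/(n(n^2-1))$, as claimed.

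For the equality case, Proposition \ref{JordanInequality} tells us equality occurs iff $A$ is $SU(n)$-conjugate to $c\cdot X^{(n)}$ for some $c\in\mathbb{C}^*$. By the definition of $X^\lambda$ as a block diagonal matrix whose blocks are elementary Jordan blocks indexed by the nonzero parts of $\lambda$, the partition $(n)$ has a single nonzero part equal to $n$ and hence $X^{(n)} = J_n$, completing the proof. There is no real obstacle here; the statement is a formal specialization, and its purpose in the paper is to record the universal lower bound corresponding to the maximal nilpotent orbit.
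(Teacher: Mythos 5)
Your proposal matches the paper's argument exactly: the paper also derives this as an immediate corollary of Proposition \ref{JordanInequality}, noting that $(n)$ is the maximal element of $\mathcal P_n$ under the dominance order, so every nilpotent matrix is of Jordan type at most $(n)$, and that $X^{(n)}=J_n$. Your additional checks (the partial-sum verification of maximality and the evaluation of $C_{(n)}$) are correct and simply make explicit what the paper leaves implicit.
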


\subsection{Young diagram and the conjugate partition} 
One can see the reference on Page 65 in \cite{CollingwoodMcGovern}. 
Given a partition $\lambda\in\mathcal P(n)$, define a new partition $\lambda^t=(\lambda_1^t,\cdots, \lambda_n^t)\in \mathcal P$ where $\lambda_j^t=|\{i|\lambda_i\geq j\}|$, called the \textbf{conjugate partition} of $\lambda$.

For a nilpotent matrix $A\in sl(n,\mathbb C)$ satisfying $A^m=0$ but $A^{m-1}\neq 0$, we have a filtration 
\begin{equation}\label{conjugateOrderedPartition}
0=\ker(A^0)\subset \ker(A)\subset \ker(A^2)\subset\cdots\subset \ker (A^m)=V=\mathbb C^n,
\end{equation} and the $k_p=\dim \ker (A^p)-\dim \ker(A^{p-1})  (p>0)$ satisfies $\sum\limits_{k=1}^nk_p=n$ and $k_p\geq k_{p+1}$. 
Then the array $(k_1, k_2, \cdots, k_m)$ forms a partition of $n$, called the \textbf{kernel partition} of $A$.

\begin{lem}\label{Duality}
(i) The conjugate partition of the kernel partition of a nilpotent element $A\in sl(n,\mathbb C)$ is the Jordan type of $A$.\\
(ii) If $\lambda, \mu$ are two partitions of $n$, then $\lambda\geq \mu$ if and only if $\mu^t\geq \lambda^t$.
\end{lem}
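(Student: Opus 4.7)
The plan is to prove (i) by a direct computation in the Jordan normal form, then derive (ii) from a clean identity for partial sums of conjugate partitions.

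For part (i), since both the kernel partition and the Jordan type are invariant under $SL(n,\mathbb{C})$-conjugation, I may assume $A = X^\lambda = \mathrm{diag}(J_{\lambda_1},\ldots,J_{\lambda_k})$ with $\lambda$ the Jordan type. For a single elementary Jordan block $J_i$ one checks directly that $\dim\ker J_i^p = \min(p,i)$, so $\dim\ker A^p = \sum_q \min(p,\lambda_q)$. Taking successive differences gives
\[
k_p \;=\; \sum_q \bigl(\min(p,\lambda_q)-\min(p-1,\lambda_q)\bigr) \;=\; \#\{q : \lambda_q \geq p\} \;=\; \lambda_p^t,
\]
so the kernel partition of $A$ equals $\lambda^t$, whose conjugate is $(\lambda^t)^t = \lambda$. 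I should also note that $(k_p)$ is weakly decreasing (so it really is a partition): this follows because $A$ induces injective maps $\ker(A^{p+1})/\ker(A^p) \hookrightarrow \ker(A^p)/\ker(A^{p-1})$.

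For part (ii), the central tool is the identity
\[
\sum_{j=1}^p \lambda_j^t \;=\; \sum_i \min(\lambda_i,p) \;=\; n - \sum_i (\lambda_i - p)_+,
\]
where $(x)_+ := \max(x,0)$, obtained by double-counting pairs $(i,j)$ with $\lambda_i \geq j$ and $j \leq p$. This reduces $\mu^t \geq \lambda^t$ to the statement that $\sum_i(\mu_i-p)_+ \leq \sum_i(\lambda_i-p)_+$ for every $p$. Assuming $\lambda \geq \mu$, set $P_\lambda = \#\{i : \lambda_i > p\}$ and $P_\mu = \#\{i : \mu_i > p\}$. The dominance hypothesis applied at index $P_\mu$ gives
\[
\sum_i (\mu_i-p)_+ \;=\; \sum_{i=1}^{P_\mu} (\mu_i - p) \;\leq\; \sum_{i=1}^{P_\mu} \lambda_i - P_\mu p,
\]
and one then extends the range of summation to $\{1,\ldots,P_\lambda\}$ using that $\lambda_i - p > 0$ for $i \leq P_\lambda$ and $\lambda_i - p \leq 0$ for $i > P_\lambda$, to conclude that the right-hand side is at most $\sum_{i=1}^{P_\lambda}(\lambda_i-p) = \sum_i(\lambda_i-p)_+$. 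The reverse implication follows immediately from involutivity $(\lambda^t)^t = \lambda$.

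The overall argument is routine. The only mildly finicky step is in (ii), where one must handle uniformly the two cases $P_\lambda \geq P_\mu$ and $P_\lambda < P_\mu$ when extending the summation range from $\{1,\ldots,P_\mu\}$ to $\{1,\ldots,P_\lambda\}$; in each case one verifies that the added or subtracted terms have the correct sign. I do not expect a deeper obstacle, since both assertions are classical combinatorial facts about Young diagrams that admit short direct proofs.
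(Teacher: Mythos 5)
Your proof is correct. Note that the paper itself gives no proof of this lemma: it is stated as a classical fact with a pointer to Collingwood--McGovern, so there is no argument in the paper to compare against. Your part (i) (reduction to Jordan form, $\dim\ker J_i^p=\min(p,i)$, differencing to get $k_p=\lambda_p^t$, plus the injectivity of $\ker(A^{p+1})/\ker(A^p)\hookrightarrow\ker(A^p)/\ker(A^{p-1})$ to confirm monotonicity) and part (ii) (the double-counting identity $\sum_{j\le p}\lambda_j^t=n-\sum_i(\lambda_i-p)_+$ followed by the dominance estimate at index $P_\mu$ and the sign check when shifting the summation range to $P_\lambda$) are both standard, complete, and correctly executed; the reverse implication via involutivity of $t$ is also fine.
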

The Young diagram helps us see the conjugate partition more explicitly.  For $\lambda\in \mathcal P(n)$, let $k$ be the largest index such that $\lambda_k>0$ and form $k$ rows of empty boxes such that the $i$th row has $\lambda_i$ boxes. Such array is called the \textbf{Young diagram} of $\lambda$. Then we can form a new Young diagram whose rows are the columns of the old ones from left to right, which is the Young diagram of $\lambda^t$. For example, for the partition $\lambda=(6,4,2,1)$ of $13$, we show in the following picture how to obtain the transpose partition $\lambda^t$.
\begin{align*}
&\text{Partition $\lambda$:}&(6,4,2,1)\\
&\text{Young Diagram:}&\yng(6,4,2,1)\\
&\text{Conjugate Partition $\lambda^t$:}&(4,3,2,2,1,1)\end{align*}

\subsection{Compositions, conjugate partitions and an analogue of Young diagram}\label{Composition}
A \textbf{composition} of $n$ is an array $(r_i)$ of positive integers $r_1,\cdots, r_m$ satisfying $\sum\limits_{i=1}^mr_i=n$. Note that $r_i$'s are not necessarily in non-increasing order and compositions are generalizations of partitions. Denote by $\mathcal{C}_n$ the space of compositions of $n$. For a composition $\mathcal R\in \mathcal{C}_n$, we can also define a ``conjugate partition\rq\rq{} of $\mathcal R$.  Define the set $T_i:=\{j|r_j\geq i\}$ and let $S_i$ be the set of lengths of consequent integers inside $T_i$. For example, if $T_i=\{2,3,4,5,7\}$, then there are two sequence of consequent integers $\{2,3,4,5\}$ and $\{7\}$ and hence $S_i=\{4,1\}$, which forms a partition of $T_i$. The collection of the sets $S_i$ are called the \textbf{conjugate set partition} of $\mathcal R$. The disjoint union of partitions $\lambda_1,\cdots, \lambda_t$ form a partition of $n$ and we call it the \textbf{conjugate partition} of $\mathcal R$ and denote it by $\mathcal{R}^t$.  For technical reasons, we extend the definition of conjugate partitions to the compositions which allow to add zeros before and after a composition. 

When $\mathcal R$ is in fact a partition, we can see that its conjugate partition coincides with its conjugate partition. So the conjugate partition is a generalization from partitions to compositions. For $\mathcal R\in \mathcal{C}_n$, the conjugate partition of $\mathcal R^t$ is $\mathcal R$ if and only if $\mathcal R\in \mathcal P_n$. 

A composition $\mathcal R=(r_1,\cdots, r_m)\in \mathcal C_n$ induces a partition $\lambda_{\mathcal R}\in \mathcal P_n$ by reordering $r_{i_1}\geq r_{i_2}\geq\cdots\geq r_{i_m}$.

We can extend the dominance order of $\mathcal P_n$ to the space $\mathcal C_n$. For $\mathcal R_1=(r_1,\cdots, r_m)$ and $\mathcal R_2=(s_1,\cdots, s_l)$. Then we say $\mathcal R_1$ dominates $ \mathcal R_2$ ($\mathcal R_1\geq \mathcal R_2$),  if for each $p$, $\sum\limits_{i=1}^pr_i\geq \sum\limits_{i=1}^p s_i$. 

For each $\mathcal R=(r_1, \cdots, r_m)\in \mathcal{C}_n$, we can associate a constant \begin{equation}\label{ConstantD}D_{\mathcal R}:=\frac{4}{\sum\limits_{p=1}^m p(p-1)r_p}.\end{equation} We can restrict to define $D_{\lambda}$ for $\lambda\in \mathcal P_n$. 

\begin{lem}\label{OrderOfPartition}
(i) For a partition $\lambda\in \mathcal P_n$, we have $C_{\lambda}=D_{\lambda^t}$.\\
(ii)(a) For two compositions $\mathcal R_1, \mathcal R_2\in \mathcal C_n$, if $\mathcal R_1< \mathcal R_2$, we have the constant $D_{\mathcal R_1}<D_{\mathcal R_2}$.\\ (b) For two partitions $\lambda,\mu\in \mathcal{P}_n$, if $\lambda>\mu$, then the constant $C_{\lambda}<C_{\mu}$. \\
(iii) For a composition $\mathcal R\in \mathcal P_n$ with $\lambda_{\mathcal R}$ its induced partition, then their conjugate partitions satisfy $\mathcal R^t\leq (\lambda_{\mathcal R})^t$ and $C_{\mathcal R^t}\geq C_{(\lambda_{\mathcal R})^t}=D_{\lambda_{\mathcal R}}$.\\
(iv) For any permutation of $r_i$\rq{}s in $\mathcal R$ which is still a composition of $n$, denoted as $\mathcal R_1$, then $\mathcal D_{\mathcal R_1}\leq C_{\mathcal R^t}.$
\end{lem}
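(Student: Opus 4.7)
The plan is to handle the four parts in order, with (i) providing a computational identity linking the two constants, (ii)(a) an Abel-summation estimate on compositions that implies (ii)(b), and the combinatorics of conjugate partitions of compositions driving (iii) and (iv). For (i), I would expand $\sum_p p(p-1)\lambda_p^t$ by writing $\lambda_p^t = \sum_i \mathbf{1}\{\lambda_i \geq p\}$ and swapping the order of summation to get $\sum_i \sum_{p=1}^{\lambda_i} p(p-1)$; the elementary identity $\sum_{p=1}^k p(p-1) = \frac{k(k-1)(k+1)}{3}$ collapses this to $\frac{1}{3}\sum_i \lambda_i(\lambda_i^2-1)$, matching the denominator of $C_{\lambda}$ and giving $C_{\lambda} = D_{\lambda^t}$.

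For (ii)(a), I would apply summation by parts. Writing $S_p^{(j)} = \sum_{i \leq p} r_i^{(j)}$ (padding with zeros so both compositions share a common length $M$), the key identity is $\sum_{p=1}^M p(p-1)\, r_p = M(M-1)\, n - 2\sum_{p=1}^{M-1} p\, S_p$, which uses $p(p-1) - (p+1)p = -2p$ and $1\cdot 0 = 0$ at the lower endpoint. The hypothesis $\mathcal R_1 < \mathcal R_2$ means $S_p^{(1)} \leq S_p^{(2)}$ with strict inequality somewhere in $\{1,\ldots,M-1\}$, so $\sum_p p(p-1) r_p^{(1)} > \sum_p p(p-1) r_p^{(2)}$ and hence $D_{\mathcal R_1} < D_{\mathcal R_2}$. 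Part (ii)(b) follows immediately from (i) together with Lemma \ref{Duality}(ii), which reverses dominance under conjugation of partitions.

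For (iii), the equality $C_{(\lambda_{\mathcal R})^t} = D_{\lambda_{\mathcal R}}$ is just (i) applied to $\lambda_{\mathcal R}$. The substance is $\mathcal R^t \leq (\lambda_{\mathcal R})^t$: for each $i$ the set $T_i = \{j : r_j \geq i\}$ has the same cardinality whether one sorts $\mathcal R$ or not, so $(\lambda_{\mathcal R})^t$ is the disjoint union of the one-part partitions $(|T_i|)$, whereas $\mathcal R^t$ is the disjoint union of the partitions $S_i$ of $|T_i|$ recording the consecutive-run lengths in $T_i$. Since $(|T_i|)$ is the maximum of $\mathcal P_{|T_i|}$, the comparison holds block-by-block, and iterating Lemma \ref{DisjointUnionDominance} yields $\mathcal R^t \leq (\lambda_{\mathcal R})^t$; (ii)(b) then converts this into the constant inequality. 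For (iv), any permutation $\mathcal R_1$ of $\mathcal R$ shares the same induced partition $\lambda_{\mathcal R_1} = \lambda_{\mathcal R}$, and because sorting maximizes partial sums one has $\lambda_{\mathcal R_1} \geq \mathcal R_1$; the weak version of the (ii)(a) estimate (replacing $<$ by $\leq$, which the same Abel argument gives) yields $D_{\mathcal R_1} \leq D_{\lambda_{\mathcal R}}$, and then (iii) finishes via $D_{\lambda_{\mathcal R}} = C_{(\lambda_{\mathcal R})^t} \leq C_{\mathcal R^t}$.

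The main obstacle I anticipate is the bookkeeping in (iii): $\mathcal R^t$ is \emph{not} obtained by simply conjugating the sorted partition $\lambda_{\mathcal R}$, and one must correctly interpret $\mathcal R^t$ as a concatenation of run-length partitions on the $|T_i|$ before invoking Lemma \ref{DisjointUnionDominance}. Everything else reduces to Abel summation and the elementary identity used in (i).
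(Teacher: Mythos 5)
Your proposal is correct and follows essentially the same route as the paper: Abel summation for (ii)(a), Lemma \ref{Duality}(ii) plus (i) for (ii)(b), the block-by-block comparison $\lambda_i\leq (l_i)$ with Lemma \ref{DisjointUnionDominance} for (iii), and domination of any permutation by the sorted partition for (iv). Your part (i) merely reorganizes the paper's computation (interchanging a double sum rather than summing by parts over the conjugate), which is an equivalent calculation.
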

\begin{proof}
Part (i) is by direct calculation.  Suppose $\lambda=(\lambda_1,\cdots, \lambda_k)$ and $\lambda^t=(\mu_1,\cdots, \mu_l)$. Denote $d_p=p(p^2-1)$ and then $C_{\lambda}=\frac{12}{\sum\limits_{p=1}^k\lambda_p(\lambda_p^2-1)}=\frac{12}{\sum\limits_{p=1}^kd_{\lambda_p}}$. By the duality of $\lambda$ and $\lambda^t$, we have that inside $\lambda$, $i$ appears exactly $\mu_i-\mu_{i+1}$ times. (Assume $\mu_{l+1}=0$). Recall the formula (\ref{ConstantC}) of $C_{\lambda}$, we can rewrite $$C_{\lambda}=\frac{12}{\sum\limits_{q=1}^l(\mu_q-\mu_{q+1})\cdot B_{q}}=\frac{12}{\sum\limits_{q=1}^l \mu_q\cdot (d_q-d_{q-1})}=\frac{4}{\sum\limits_{q=1}^l \mu_p\cdot q(q-1)}=D_{\lambda^t}.$$\\

For Part (ii), denote $A_p=\sum\limits_{i=1}^pr_i$ and hence $r_p=A_p-A_{p-1}$. Then \begin{eqnarray*}
\frac{4}{D_{\mathcal R}}&=&\sum\limits_{i=1}^m r_p\cdot p(p-1)=\sum_{i=1}^m (A_p-A_{p-1})\cdot p(p-1)\\&=&\sum\limits_{i=1}^{m-1} A_p(p(p-1)-(p+1)p)+A_m m(m-1)\\
&=&-2\sum\limits_{i=1}^mA_p\cdot p+nm(m-1).
\end{eqnarray*} Then we can see that $\frac{4}{D_{\mathcal R}}$ is decreasing with respect to each $A_p$. Therefore, we obtain Part (ii)(a). 

For Part (ii)(b), given two partitions $\lambda, \mu\in \mathcal P_n$ satisfying $\lambda>\mu$, it follows immediately from Part (ii) of Lemma \ref{Duality} that $\lambda^t<\mu^t$. So by Part (ii)(a), we have $D_{\lambda^t}<D_{\mu^t}$. Then by Part (i), we obtain $C_{\lambda}<C_{\mu}$. Conversely, it is also true.

For Part (iii), in the definition of $\mathcal R^t$, it is the disjoint union of partitions $\lambda_1,\cdots, \lambda_t$ where each $\lambda_i$ is a partition of $l_i=|T_i|$. It is easy to see that for the induced partition $(\lambda_{\mathcal R})^t$, it is the disjoint union of partitions $(l_1),\cdots, (l_t)$. For each $i$, $\lambda_i\leq (l_i)$ and thus by Lemma \ref{DisjointUnionDominance}, we have $$\mathcal R^t=\lambda_1\cup\lambda_2\cup\cdots \cup\lambda_t\leq (l_1)\cup (l_2)\cup\cdots\cup (l_t)=(\lambda_{\mathcal R})^t.$$ Then we have 

For Part (iv), clearly for any permutation of $r_i$\rq{}s in $\mathcal R$ which is still a composition of $n$ and is always dominated by the induced partition $\lambda_{\mathcal R}$ of $\mathcal R$. By Part (v), we have $D_{\mathcal R_1}\leq D_{\lambda_{\mathcal R}}$. Together with $C_{\mathcal R^t}\geq D_{\lambda_{\mathcal R}}$ in Part (iii), we obtain that $\mathcal D_{\mathcal R_1}\leq C_{\mathcal R^t}.$
\end{proof}

\begin{lem}\label{DUAL}For any $\mathcal{R}\in \mathcal{C}_n$ with only $m$ positive integers, the conjugate partition of $\mathcal{R}^t$ is $(a^1,a^2-a^1,\cdots, a^m-a^{m-1})(\mathcal R)$ where 
\begin{eqnarray}
&&a_l^i(\mathcal R)=r_{l}-\min\limits_{l\leq t\leq l+i}r_t (\text{for $l\leq m-i$)},\\
&&a^i(\mathcal R):=\sum\limits_{l=1}^{m-i}a_l^i(\mathcal R)+\sum\limits_{p=m-i+1}^{m}r_{p} (a^m(\mathcal R)=n).\end{eqnarray} 
\end{lem}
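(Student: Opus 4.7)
The plan is to reduce the statement to a partial-sum identity for conjugate partitions and then verify it by a double-counting argument. Since $\mathcal R^t$ is itself a partition, the claim amounts to saying that the partial sums of $(\mathcal R^t)^t$ are exactly $a^1(\mathcal R), a^2(\mathcal R), \ldots, a^m(\mathcal R) = n$. Applying the standard identity $\sum_{k=1}^{i} \mu^t_k = \sum_j \min(\mu_j, i)$ (valid for any partition $\mu$) with $\mu = \mathcal R^t$ reduces the problem to proving
\[
a^i(\mathcal R) \;=\; \sum_j \min\bigl((\mathcal R^t)_j,\, i\bigr) \qquad \text{for all } i = 1, \ldots, m.
\]

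For the main step I would unfold $\mathcal R^t = \bigsqcup_{k \geq 1} S_k$, where the parts of $S_k$ are the lengths of maximal runs of consecutive integers in $T_k = \{j : r_j \geq k\}$. For a single run $R \subset T_k$ of length $L$, writing $\min(L, i) = L - (L - i)_+$ and observing that $(L - i)_+$ is precisely the number of starting indices $l$ such that $l, l+1, \ldots, l+i$ all lie in $R$, one obtains
\[
\sum_{s \in S_k} \min(s, i) \;=\; |T_k| \;-\; \#\bigl\{\, l \leq m - i \,:\, \min(r_l, r_{l+1}, \ldots, r_{l+i}) \geq k\,\bigr\}.
\]
Summing over $k$ and using $\sum_{k \geq 1} |T_k| = \sum_j r_j = n$ together with the layer-cake identity $\sum_k \mathbf{1}[\min(r_l, \ldots, r_{l+i}) \geq k] = \min(r_l, \ldots, r_{l+i})$ yields
\[
\sum_j \min\bigl((\mathcal R^t)_j,\, i\bigr) \;=\; n \;-\; \sum_{l=1}^{m-i} \min(r_l, r_{l+1}, \ldots, r_{l+i}).
\]

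Finally, unpacking the definition in the lemma gives
\[
a^i(\mathcal R) \;=\; \sum_{l=1}^{m-i}\bigl(r_l - \min_{l \leq t \leq l+i} r_t\bigr) + \sum_{p=m-i+1}^{m} r_p \;=\; n - \sum_{l=1}^{m-i} \min_{l \leq t \leq l+i} r_t,
\]
since the collected $r$-terms combine to $n$. This matches the expression from the previous step and closes the argument. The main obstacle is the combinatorial step identifying $(L - i)_+$ with the count of length-$(i{+}1)$ windows of consecutive indices fitting inside a run of length $L$; once this bijection is in place, the identity follows by an elementary Fubini swap of the double sum over levels $k$ and starting positions $l$.
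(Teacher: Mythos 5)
Your argument is correct, and it takes a genuinely different route from the paper. The paper proves Lemma \ref{DUAL} by induction on $m$: it locates the positions where $r_{i_p}=1$, subtracts $1$ from every part, splits the result into sub-compositions $\mathcal R^p$ between those positions, and checks that both $a^i(\mathcal R)-a^{i-1}(\mathcal R)$ and the number of parts of $\mathcal R^t$ that are at least $i$ exceed the corresponding sums over the pieces by exactly $1$ --- in effect peeling the first column off the generalized Young diagram and invoking the fact that $\mathcal R^t$ is the disjoint union of $\{m\}$ with the $(\mathcal R^p)^t$. You instead prove the equivalent partial-sum identity directly: reducing via $\sum_{k\le i}\mu^t_k=\sum_j\min(\mu_j,i)$ to showing $a^i(\mathcal R)=\sum_j\min\bigl((\mathcal R^t)_j,i\bigr)$, and then computing both sides as $n-\sum_{l=1}^{m-i}\min_{l\le t\le l+i}r_t$ by a layer-cake/Fubini count over levels $k$ and window positions $l$. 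The key observation --- that a window $\{l,\dots,l+i\}$ lies in $T_k$ iff $\min_{l\le t\le l+i}r_t\ge k$, and that a run of length $L$ contains exactly $(L-i)_+$ such windows --- is sound, and the telescoping of $\sum_{l\le m-i}r_l+\sum_{p>m-i}r_p=n$ closes the identity. Your proof is shorter, avoids induction entirely, and makes transparent why the sliding-window minima appear in the formula; the paper's peeling argument, by contrast, exhibits the recursive column-by-column structure of $\mathcal R^t$, which is the same device used in its proof of Lemma \ref{TypeMatrix}. Both are valid; yours is arguably the cleaner verification of this particular identity.
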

\begin{proof}
We prove by induction on the number of positive numbers in a composition. If it is $1$, the statement is clearly true. Assume that the statement holds for any $k<m$ in which case the composition has $k$ positive numbers.
Suppose there are exactly $i_1,\cdots,i_k$ satisfying $1\leq i_1<i_2<\cdots<i_k\leq m$ and $r_{i_p}=1, 1\leq p\leq k.$ Set $r_l'=r_l-1, 1\leq l\leq m.$ Set $i_0:=0$ and $i_{k+1}:=m+1$. For each $0\leq p\leq k$, let $\mathcal R^p$ be $(0,\cdots, 0, r_{i_p+1}',\cdots, r_{i_{p+1}-1}',0,\cdots,0)$. Note that if $i_1=1$, $\mathcal R^0$ is a zero vector; if $i_k=m$, $\mathcal R^{k}$ is a zero vector.  Then for $l\leq m-i$,
\begin{eqnarray*}
&&a_l^i(\mathcal R^p)=r^p_{l}-\min\limits_{l\leq t\leq l+i}r^p_t=0\quad\text{for $l\leq i_p$ or $l\geq i_{p+1}$,}\\
&&a_l^i(\mathcal R^p)=r^p_{l}-\min\limits_{l\leq t\leq l+i}r^p_t=r_{l}'-\min\limits_{l\leq t\leq l+i}r_{t}'=a_l^i(\mathcal R)\quad \text{ for $i_p\leq l\leq i_{p+1}-1$}.
\end{eqnarray*}

For a fixed $i$ satisfying $i_p\leq m-i\leq i_{p+1}-1$ for some $p$, then 
\begin{eqnarray*}
 a^i(\mathcal R^f)&=&\sum\limits_{l=1}^{m-i}a_l^i(\mathcal R^f)+\sum\limits_{q=m+1-i}^{m}\mathcal R^f_{q}=\sum\limits_{l=i_f}^{i_{f+1}-1}a_l^i(\mathcal R)\quad \text{for $0\leq f<p$,}\\
a^i(\mathcal R^f)&=&\sum\limits_{l=1}^{m-i}a_l^i(\mathcal R^f)+\sum\limits_{q=m+1-i}^{m}\mathcal R^f_{q}=\sum\limits_{l=i_f}^{i_{f+1}-1}r'_l=\sum\limits_{l=i_f}^{i_{f+1}-1}(r_l-1)\text{for $p<f\leq k$,}\\
a^i(\mathcal R^p)&=&\sum\limits_{l=1}^{m-i}a_l^i(\mathcal R^p)+\sum\limits_{q=m+1-i}^{m}\mathcal R^p_{q}=\sum\limits_{l=i_p}^{m-i}a_l^i(\mathcal R)+\sum\limits_{q=m+1-i}^{i_{p+1}-1}r'_q\\&=&\sum\limits_{l=i_p}^{m-i}a_l^i(\mathcal R)+\sum\limits_{q=m+1-i}^{i_{p+1}-1}(r_q-1).\end{eqnarray*}
Summing over the above equalities,  
we obtain 
\begin{eqnarray*}
a^i(\mathcal R)&=&\sum\limits_{l=1}^{m-i}a_l^i(\mathcal R)+\sum\limits_{p=m+1-i}^mr_p\\
&=&\sum\limits_{p=0}^{k}\sum\limits_{l=i_p+1}^{i_{p+1}-1}a_l^i(\mathcal R^p)+\sum\limits_{p=m+1-i}^mr'_p+i=\sum\limits_{p=0}^{k}a^i(\mathcal R^p)+i.\end{eqnarray*}
So we obtain the first fact: $a^i(\mathcal R)-a^{i-1}(\mathcal R)$ has $1$ more than $\sum\limits_{p=0}^{k}(a^i(\mathcal R^p)-a^{i-1}(\mathcal R^{p}))$ for every $i$. 

By assumption, for each $\mathcal R^p$, $a^i(\mathcal R^p)-a^{i-1}(\mathcal R^p)$ counts the number of elements in $(\mathcal R^p)^t$ which are at least $i$. By the definition of the conjugate partition, $(\mathcal{R})^t$ is a disjoint union of the set $\{m\}$ and $(\mathcal R^0)^t, (\mathcal R^1)^t, \cdots, (\mathcal R^{k})^t$. So we have the second fact: for each $i$, the number of elements in $(\mathcal{R})^t$ at least $i$ is exactly $1$ more than the number of the elements in the disjoint union of $(\mathcal R^0)^t, (\mathcal R^1)^t, \cdots, (\mathcal R^{k})^t$ at least $i$. 

Combining the two facts, $a^i(\mathcal R)-a^{i-1}(\mathcal R)$ is exactly the number of elements in $\mathcal{R}^t$ which are at least $i$ and the statement is proven for $m$.
\end{proof}

\begin{df}\label{TypeR}Given a composition $\mathcal{R}\in \mathcal {C}_n$, a matrix $A$ is said to be of \textbf{type $\mathcal R$} if there exists a decomposition of $V=V_1\oplus \cdots\oplus V_m$ where $V_i$ is of rank $r_i$, such that the linear transformation of $A$ on $V$ is just the combinations of each $A_i: V_i\rightarrow V_{i+1}$. That is, with respect to the decomposition, $A$ is of the form \begin{equation}\begin{pmatrix}
0&&&&\\
A_1&0&&&\\
&A_2&0&&\\
&&\ddots&\ddots&\\
&&&A_{m-1}&0\end{pmatrix},\end{equation}where each $A_i$ is a $r_{i}\times r_{i+1}$ matrix, for $1\leq i\leq m-1$. 
\end{df}

\begin{lem}\label{TypeMatrix} For a matrix $A$ of type $\mathcal R\in \mathcal{C}_n$, then\\
(i) $A$ is of Jordan type at most $\mathcal{R}^t$;\\
(ii) $A$ is of Jordan type $\mathcal{R}^t$ if and only if each $V_i$ decomposes into subspaces $V_i^1, V_i^2,\cdots, V_i^{s_i}$ where $A_i|_{V_i^s}$ is either an isomorphism to $V_{i+1}^s$ or a zero map if $V_{i+1}^s$ does not exist. 
\end{lem}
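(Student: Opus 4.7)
The plan is to prove Part (i) by bounding $\dim \ker A^k$ from below using the block structure, and to derive Part (ii) from the equality case of this bound.

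For Part (i), with respect to the decomposition $V = V_1 \oplus \cdots \oplus V_m$, the power $A^k$ sends $V_l$ into $V_{l+k}$ via the composition $A_{l+k-1} \circ A_{l+k-2} \circ \cdots \circ A_l$ (and is identically zero when $l + k > m$). Since this composition factors through intermediate spaces of dimensions $r_l, r_{l+1}, \ldots, r_{l+k}$, its rank is bounded above by $\min_{l \leq t \leq l+k} r_t$. Applying rank--nullity to each summand and adding,
\[
\dim \ker A^k \;\geq\; \sum_{l=1}^{m-k}\bigl(r_l - \min_{l \leq t \leq l+k} r_t\bigr) + \sum_{l=m-k+1}^{m} r_l \;=\; a^k(\mathcal R),
\]
in the notation of Lemma \ref{DUAL}. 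By that lemma, the sequence $(a^1, a^2 - a^1, \ldots, a^m - a^{m-1})$ is exactly $(\mathcal R^t)^t$, so the kernel partition of $A$ dominates $(\mathcal R^t)^t$ in the dominance order. Lemma \ref{Duality} then yields that the Jordan type of $A$ is at most $\mathcal R^t$.

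For Part (ii), the ``if'' direction is immediate: the given decomposition exhibits $V$ as a direct sum of $A$-invariant strands $W_s = \bigoplus_i V_i^s$, on each of which $A$ acts as a shift operator. A strand with constant subspace dimension $d$ spanning $\ell$ consecutive levels contributes $d$ Jordan blocks of size $\ell$, and tallying strand lengths against consequent runs in each $T_i = \{j : r_j \geq i\}$ reproduces $\mathcal R^t$ as constructed in Section \ref{Composition}.

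For the ``only if'' direction, Jordan type exactly $\mathcal R^t$ means $\dim \ker A^k = a^k(\mathcal R)$ for every $k$, and the term-by-term nature of the estimate in Part (i) forces $\mathrm{rank}(A_{l+k-1} \circ \cdots \circ A_l) = \min_{l \leq t \leq l+k} r_t$ for all valid pairs $(l, k)$. I would then construct the $V_i^s$ by building a Jordan basis of $A$ compatible with the grading. Since $A$ has degree $1$ with respect to the grading, each $\ker A^k$ decomposes as $\bigoplus_i (\ker A^k \cap V_i)$; hence one can pick graded complements $Q_k$ to $\ker A^{k-1}$ inside $\ker A^k$, and each element of $Q_k \cap V_i$ generates a Jordan chain $v, Av, \ldots, A^{k-1}v$ lying in $V_i, V_{i+1}, \ldots, V_{i+k-1}$. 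The maximal-rank conditions ensure these chains can be extended without conflict, and chains occupying the same range of levels then define the subspaces $V_i^s$ with the asserted shift structure.

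The main obstacle is the construction in the ``only if'' direction: keeping the bookkeeping clean enough to verify that the graded Jordan basis realizes precisely the decomposition demanded by the statement. I would organize the argument as a decreasing induction on chain length, peeling off chains of length $m$ first, then chains of length $m-1$, and so on, at each step using the corresponding rank equality to guarantee the required isomorphism between consecutive levels of each strand.
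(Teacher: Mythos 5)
Your Part (i) is essentially the paper's argument: the same rank bound on the compositions $A_{l+k-1}\circ\cdots\circ A_l$, the same observation that $\ker A^k$ splits as a direct sum over the grading to give $\dim\ker A^k\geq a^k(\mathcal R)$, and the same appeal to Lemmas \ref{DUAL} and \ref{Duality}; you merely make explicit the final translation through the dominance order, which the paper leaves implicit.

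For Part (ii) you and the paper share the key step --- equality in the termwise estimate forces $\mathrm{rank}(A_{l+k-1}\circ\cdots\circ A_l)=\min_{l\le t\le l+k}r_t$ for all valid $(l,k)$, which is precisely the paper's Claim \ref{ImportantClaim} --- but you extract the decomposition by a different route. The paper sets $b=\min_i r_i$, uses the rank equalities to carve out a coherent strand $\bigoplus_i V_i^1$ of dimension $b$ running through every level, and then recurses on the complementary pieces supported between consecutive minimal levels (a matrix of type $(r_j-b)$ on each segment); you instead propose a graded Jordan basis built by decreasing induction on chain length. Both routes are viable, and both leave bookkeeping unfinished: you flag the chain-extension step yourself, while the paper proves neither its Claim nor the ``if'' direction and explicitly notes that Part (ii) is not needed for its main results. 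Two concrete cautions on your sketch: (a) complements $Q_k$ of $\ker A^{k-1}$ inside $\ker A^k$ chosen independently at each $k$ do not in general assemble into a Jordan basis --- $Q_k$ must also be chosen transverse to $A(\ker A^{k+1})$ --- so the decreasing induction on chain length is not an organizational convenience but the essential mechanism; (b) in the ``if'' direction, the tally of strand lengths reproduces $\mathcal R^t$ only when each strand is maximal, i.e.\ $A_i|_{V_i^s}$ is an isomorphism whenever the strand continues to level $i+1$ and the strands through level $i$ exhaust $V_i$; that is what the lemma's condition is meant to enforce, but your one-line ``tallying'' step should verify it rather than assert it.
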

Part (ii) is not needed for our main result. We believe it is of its own interest and include the proof here. 
\begin{proof}

For Part (i), it is equivalent to show that for each $i$, $\dim ker A^i\geq a^i(\mathcal R)$. Note that the composition $A_{l+i-1}\circ A_{l+i-2}\circ\cdots\circ A_l:V_l\rightarrow V_{l+i}$ has rank no greater than that of $A_l,\cdots,A_{l+i}$. So $$\dim ker (A_{l+i-1}\circ A_{l+i-2}\circ\cdots \circ A_l)\geq r_l-\min_{l\leq p\leq l+i}r_p=a^i_l(\mathcal R).$$ Observe that the subspace $ker A^i$ is a direct sum of $\bigoplus\limits_{l=1}^{m-i}ker(A_{l+i-1}\circ A_{l+i-2}\circ\cdots \circ A_l)$ and $\bigoplus\limits_{p=1}^iV_{m-p+1}$. So $\dim ker A^i\geq \sum\limits_{l=1}^{m-i}a_l^i(\mathcal R)+\sum\limits_{p=1}^ir_{m-p+1}=a^i(\mathcal R)$.\\

For Part (ii), for the equality holds, we have the following claim:
\begin{cla}\label{ImportantClaim}
For each $l, i$ satisfying $1\leq l, l+i\leq m$, the map $A_{l+i-1}\circ A_{l+i-2}\circ\cdots\circ A_l:V_l\rightarrow V_{l+i}$ has rank equal to $\min\limits_{l\leq p\leq l+i}r_p$.
\end{cla}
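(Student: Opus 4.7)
The plan is to deduce the claim from a comparison of two expressions for $\dim\ker A^i$: one obtained by unpacking the block-triangular structure of $A$, and one read off from the hypothesis on Jordan type via Lemmas \ref{Duality} and \ref{DUAL}.

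First, I would unpack $\ker A^i$ using the decomposition $V = V_1\oplus\cdots\oplus V_m$. For $v\in V_l$, we have $A^i v \in V_{l+i}$ when $l+i\leq m$, and $A^i v = 0$ automatically when $l+i>m$. Hence
\[
\ker A^i \;=\; \bigoplus_{l=1}^{m-i}\ker\bigl(A_{l+i-1}\circ\cdots\circ A_l\bigr)\;\oplus\;\bigoplus_{p=m-i+1}^{m}V_p,
\]
which gives
\[
\dim\ker A^i \;=\; \sum_{l=1}^{m-i}\dim\ker(A_{l+i-1}\circ\cdots\circ A_l)\;+\;\sum_{p=m-i+1}^{m}r_p.
\]
Since each composition factors through spaces of dimensions $r_l,r_{l+1},\ldots,r_{l+i}$, rank-nullity gives the universal lower bound
\[
\dim\ker(A_{l+i-1}\circ\cdots\circ A_l)\;\geq\; r_l-\min_{l\leq p\leq l+i}r_p \;=\; a_l^i(\mathcal R),
\]
exactly as in the proof of Part (i).

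Next, I would translate the hypothesis that $A$ has Jordan type \emph{equal to} $\mathcal R^t$. By Lemma \ref{Duality}(i), the kernel partition of $A$ is the conjugate of its Jordan type, namely $(\mathcal R^t)^t$. By Lemma \ref{DUAL}, this conjugate is precisely $(a^1(\mathcal R),\,a^2(\mathcal R)-a^1(\mathcal R),\ldots,\,a^m(\mathcal R)-a^{m-1}(\mathcal R))$. Therefore $\dim\ker A^i = a^i(\mathcal R)$ for every $i$, which by the definition
\[
a^i(\mathcal R)\;=\;\sum_{l=1}^{m-i}a_l^i(\mathcal R)\;+\;\sum_{p=m-i+1}^{m}r_p
\]
exactly matches the sum in the previous paragraph with every lower bound $a_l^i(\mathcal R)$ saturated.

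Finally, since the lower bounds $\dim\ker(A_{l+i-1}\circ\cdots\circ A_l)\geq a_l^i(\mathcal R)$ sum to $\dim\ker A^i$, each individual inequality must be an equality. Equivalently, $\mathrm{rank}(A_{l+i-1}\circ\cdots\circ A_l) = \min_{l\leq p\leq l+i}r_p$, which is the claim. The only genuinely delicate part is checking that the combinatorial identity $\dim\ker A^i = a^i(\mathcal R)$ truly follows from the hypothesis — this is precisely the content packaged into Lemma \ref{DUAL}, so once that lemma is in hand the claim reduces to the equality-in-a-sum-of-inequalities argument above.
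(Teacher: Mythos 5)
Your argument is correct and is essentially the one the paper intends: the paper asserts the Claim as the equality case of the counting argument from Part (i), and your write-up makes that assertion precise using exactly the ingredients the paper has already assembled for this purpose (the direct-sum decomposition of $\ker A^i$, the lower bounds $a_l^i(\mathcal R)$, and Lemmas \ref{Duality}(i) and \ref{DUAL}). Nothing further is needed.
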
 

We prove by induction on $n$. In the case $n=1$, the statement is clear. Suppose the statement is true for all $k<n$ under the assumption \ref{ImportantClaim}. Now we are going to show it is true for the case $n$. Suppose there are exactly $s_1<s_2<\cdots<s_k$ such that $r_{s_1}=\cdots =r_{s_k}=b=\min_{1\leq i\leq m}r_i$. Set $s_0=0$ and $s_{k+1}=n+1.$ For each $1\leq p\leq k$, define the subspace $V_{s_p}^1=V_{s_p}$. For $i\neq s_1,\cdots, s_{k}$, let $s_p$ be the nearest number of $i$ (might not be unique), then define
\begin{eqnarray}
V_i^1=(A_{s_p-1}\circ A_{s_p-2}\circ\cdots\circ A_i)^{-1}(V_{s_p}),\quad \text{for $i< s_p$};\\
V_i^1=(A_{i-1}\circ A_{i-2}\circ\cdots\circ A_{s_p})(V_{s_p}),\quad \text{for $i>s_p$}.
\end{eqnarray}We only need to check whether $V_i^1$ is well-defined. Suppose there exist $s_p, s_{p+1}$ such that $i=\frac{s_p+s_{p+1}}{2}$. From Claim \ref{ImportantClaim} and $r_{s_p}=r_{s_{p+1}}=b$, $A_{s_{p+1}-1}\circ A_{s_{p+1}-2}\circ\cdots\circ A_{i_p}: V_{s_p}\rightarrow V_{s_{p+1}}$ is an isomorphism, and thus $$(A_{s_{p+1}-1}\circ A_{s_{p+1}-2}\circ\cdots\circ A_i)^{-1}(V_{s_{p+1}})=(A_{i-1}\circ A_{i-2}\circ \cdots\circ A_{s_p})(V_{s_p}).$$ Therefore the two definitions of $V_i^1$ coincide and hence $V_i^1$ is well-defined. Moreover, $A|_{V_i^1}$ is always an isomorphism to $V_{i+1}^1$.

For each $0\leq p\leq k$, take $W_{s_p+1}$ to be the complement of $V_{s_p+1}^1$ in $V_{s_p+1}$. For each $i$ satisfying $s_p<i<s_{p+1}$, let $W_i$ be the complement of $V_i^1$ in $V_i$ containing $A^{i-s_p-1}(W_{s_p+1})$ and $V_i$ is decomposed into $V_{i}^1+W_i$.
Set \begin{eqnarray}\label{DecompositionUp}
&&U_p:=W_{s_p+1}\oplus W_{s_p+2}\oplus\cdots \oplus W_{s_{p+1}-1}\quad\text{ for $0\leq p\leq k.$}\end{eqnarray}
We can decompose the space into the direct sum of $A$-invariant subspaces $\bigoplus\limits_{i=1}^m V_i^1$ and $U_p$'s for $0\leq p\leq k$. On each subspace $U_p$, with respect to the decomposition (\ref{DecompositionUp}), $A$ is of the form 
\begin{equation}\begin{pmatrix}
0&&&&\\
A_1^p&0&&&\\
&A_2^p&0&&\\
&&\ddots&\ddots&\\
&&&A_{s_{p+1}-s_p-2}^p&0\end{pmatrix},\end{equation}
Moreover, following from Claim \ref{ImportantClaim}, we still have that for each $l, i$ such that $s_p+1\leq l, l+i\leq s_{p+1}-1$, the map $A_{l+i-1}^p\circ A_{l+i-2}^p\circ\cdots\circ A_l^p:W_l\rightarrow W_{l+i}$ has rank equal to $\min\limits_{l\leq q\leq l+i}(r_q-b)$. On each $U_p$, it satisfies Claim \ref{ImportantClaim}. By assumption, we have the decomposition on each $U_p$. Together with the space $\bigoplus\limits_{i=1}^m V_i^1$, we obtain the statement. 
\end{proof}

To see the conjugate partition clearly, we introduce a diagram. For $\mathcal R=(r_1,\cdots,r_m)\in \mathcal C(n)$, form $m$ rows of empty boxes such that the $i$th row has $r_i$ boxes. We obtain an analogue of Young diagram. Unlike the Young diagram which only has the increasing rows, here we consider rows which are not necessarily non-increasing. We first define a \textbf{conjugate set partition} of $\mathcal R$ as $(\lambda_1,\lambda_2,\cdots,\lambda_k)$, where $\lambda_i$ is a partition of $T_i$. For each column, we read the length of each connected component of empty boxes and denote by $S_i$ the set of lengths in the $i_{th}$ column. The conjugate partition $\mathcal R^t$ of $\mathcal R$ is the disjoint union of the conjugate set partition. For example, for a composition $\mathcal R$ of $13$, we show in the following picture how to obtain the conjugate partition $\mathcal R^t$.
\begin{align*}
&\text{Composition $\mathcal R$:}&(2,4,2,4,3,2)\\
&\text{Generalized Young Diagram:}&\yng(2,4,2,4,3,2)\\
&\text{Conjugate Set Partition:}&(\{6\}, \{6\}, \{1, 2\}, \{1, 1\})\\
&\text{Conjugate Partition $\mathcal R^t$:}&(6,6,2,1,1,1)
\end{align*}

\begin{prop}\label{CompositionEstimate}
For a matrix $A$ of type $\mathcal R\in \mathcal{C}_n$ together with the decomposition $V=V_1\oplus \cdots\oplus V_m$, the function satisfies \begin{equation}
K(A)\geq C_{\mathcal R^t}.\end{equation}

In the case when $V_i$'s are orthogonal, each $V_i$ decomposes into $r_i$ lines $L_i^1, L_i^2,\cdots, L_i^{r_i}$ orthogonally, where $A_i|_{L_i^s}$ is either an isometry to $L_{i+1}^s$ or a zero map if $L_{i+1}^s$ does not exist. Suppose for each $L_i^s$, let $I_i^s=[j_0^s, j_1^s]$ be the maximal interval containing $i$ such that $A_j|_{L_j^s}$ is nonzero for each $j\in \mathbb Z\cap I_i^s$ and there is a unit vector $v_j^s$ of $L_j^s$, $A_j(v_j^s)=a_j^sv_{j+1}^s$ where $a_j^s=\sqrt{(j-j_0^s+1)(j_1^s-j)}$. Then $K(A)=C_{\mathcal R^t}.$
\end{prop}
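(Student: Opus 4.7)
The plan has two parts corresponding to the two claims in the proposition.

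For the inequality $K(A) \geq C_{\mathcal R^t}$, I would simply chain two earlier results in this section. By Lemma \ref{TypeMatrix}(i), any matrix $A$ of type $\mathcal R$ is nilpotent and its Jordan type is at most $\mathcal R^t$. Proposition \ref{JordanInequality} then yields $K(A) \geq C_{\mathcal R^t}$ immediately. No orthogonality hypothesis is used for this half.

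For the equality case, my plan is to show that the described $A$ is $SU(n)$-conjugate to $X^{\mathcal R^t}$. Since $K(B) = \|[B,B^*]\|^2/\|B\|^4$ is clearly $U(n)$-invariant, this will give $K(A) = K(X^{\mathcal R^t}) = C_{\mathcal R^t}$ by the definition (\ref{ConstantC}). To do so, I would decompose $V$ as an orthogonal sum of maximal \emph{chains}: for each pair $(s, I^s)$ with $I^s = [j_0^s, j_1^s]$ maximal such that $L_j^s$ exists for every $j \in I^s$, form $\mathcal L^s = \bigoplus_{j \in I^s} L_j^s$. Because the $L_j^s$'s are pairwise orthogonal and $A$ sends $L_j^s$ isometrically into $L_{j+1}^s$ (or to zero at the terminal end of a chain), the $\mathcal L^s$ are mutually orthogonal and $A$-invariant, and $A = \bigoplus_s A|_{\mathcal L^s}$.

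On each chain of length $k_s = j_1^s - j_0^s + 1$, in the orthonormal basis $\{v_j^s\}_{j \in I^s}$, the operator $A$ acts by $A v_j^s = a_j^s v_{j+1}^s$ with $a_j^s = \sqrt{(j - j_0^s + 1)(j_1^s - j)}$. Reindexing $p = j - j_0^s + 1$, the coefficient becomes $\sqrt{p(k_s - p)}$, matching exactly the nonzero subdiagonal entries $r_p$ of the elementary Jordan block $J_{k_s}$. Hence $A|_{\mathcal L^s}$ is represented by $J_{k_s}$ in this orthonormal basis, and $A$ is unitarily conjugate to a block-diagonal matrix of the form $\mathrm{diag}(J_{k_{s_1}}, J_{k_{s_2}}, \ldots) = X^\mu$, where $\mu \in \mathcal P_n$ is the partition whose parts (reordered in non-increasing order) are the chain lengths.

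The last and main bookkeeping step is to identify $\mu = \mathcal R^t$, for which I would appeal to the generalized Young diagram description of $\mathcal R^t$ from Subsection \ref{Composition}. In the diagram, row $j$ has length $r_j$, and column $s$ contains exactly the rows $j$ with $r_j \geq s$, i.e., $T_s$. The maximal consecutive runs in column $s$ correspond bijectively to the chains $\mathcal L^s$ with second index $s$, and their lengths form the set $S_s$. Since $\mathcal R^t = \bigsqcup_s S_s$ by definition, the multiset of chain lengths $\{k_s\}$ equals $\mathcal R^t$ as an unordered partition. Consequently $A$ is $SU(n)$-conjugate to $X^{\mathcal R^t}$, and $K(A) = C_{\mathcal R^t}$ follows. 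The only non-trivial point in the entire argument is this column-versus-chain identification, and it is really just reading off the definition of the conjugate partition from the diagram.
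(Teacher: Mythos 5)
Your proposal is correct and follows essentially the same route as the paper: the inequality by combining Lemma \ref{TypeMatrix}(i) with Proposition \ref{JordanInequality}, and the equality by decomposing $V$ into orthogonal $A$-invariant chains on each of which $A$ is unitarily conjugate to an elementary Jordan block $J_{k}$. Your explicit verification that the multiset of chain lengths read off from the columns of the generalized Young diagram is exactly $\mathcal R^t$ is a detail the paper leaves implicit, but it is the same argument.
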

\begin{proof}
Following from Lemma \ref{JordanInequality} and \ref{TypeMatrix}, we obtain the inequality. 

The equality holds since the vector space decomposes into irreducible $A$-invariant orthogonal subspaces such as $L_i^s$ for $j_0\leq i\leq j_1$. Moreover, in each such subspace of dimension $m$, $A$ is unitarily conjugate to $J_m$ as in assumption. So the equality $K(A)=C_\lambda$ follows from the equality condition in Proposition \ref{JordanInequality}.
\end{proof}

\begin{rem}\label{Symmetry}
If $\mathcal R=(r_1,\cdots, r_m)$ is symmetric, that is, $r_i=r_{m+1-i}$. Suppose for $L_i^s$, let $I_i^s$ be the maximal interval containing $i$ such that $A_j|_{L_j^s}=[j_0^s, j_1^s]$ is nonzero for each $j\in \mathbb Z\cap I_i^s$. Then by the symmetry, for $L_{m+1-i}^s$, the maximal interval $I_{m+1-i}^s$ containing $m+1-i$ such that $A_j|_{L_j^s}$ is nonzero for each $j\in \mathbb Z\cap I_{m+1-i}^s$ is $[m+1-j_1^s, m+1-j_0^s]$ and thus $a_j^s=a_{m-j}^s.$
\end{rem}

\section{Nilpotent Higgs bundles on general complex manifolds}\label{GeneralComplexManifolds}
\begin{df}
A Higgs bundle over a complex manifold $M$ consists of a pair $(E,\phi)$ where $E$ is a holomorphic vector bundle over $M$ and $\phi$ is a holomorphic section of $\Omega^1(End(E))$ satisfying the integrability condition $\phi\wedge \phi=0$.
\end{df}
A Hermitian metric $h$ on a degree $0$ Higgs bundle $(E,\phi)$ is called \textit{harmonic} if it satisfies the Hitchin equation 
\[F(D^h)+[\phi,\phi^{*_h}]=0,\quad \partial^h\phi=0,\]
where $D^h$ is the Chern connection on $E$ uniquely determined by the holomorphic structure and the metric $h$, $F(D^h)$ is the curvature of $D^h$, $\phi^{*_h}$ is the Hermitian adjoint of $\phi$ with respect to $h$ and $\partial^h$ is the $(1,0)$-part of $D^h$. On $End(E)=E\otimes E^*$, there is an induced metric: for $X, Y\in End(E),  (X, Y)=tr(XY^{*_h})$. The harmonic metric $h$ gives a Hermitian metric $g_M$ on $M$:
\[g_M(\frac{\partial}{\partial z_j}, \frac{\partial}{\partial z_k})=(\phi_j, \phi_k)=tr(\phi_j\phi_k^{*_h}),\]
where $\phi_j=\phi(\frac{\partial}{\partial z_j})$. The metric $g_M$ is called \textit{Hodge metric} on $M$, which is in fact K\"ahler (see also Wang \cite{XuWang}):\\
The fundamental form of $g_M$ is $\omega=\frac{i}{2} \cdot tr(\phi_j\phi_k^{*_h})dz_j\wedge d\bar z_k$, then \begin{eqnarray*}\partial \omega&=&[tr((\partial \phi_j)\phi_k^{*_h})-tr(\phi_j[h^{-1}\partial h, \phi_k^{*_h}])]dz_j\wedge d\bar z_k\\
&=&\frac{i}{2}[tr((\partial \phi_j)\phi_k^{*_h})+tr([h^{-1}\partial h, \phi_j] \phi_k^{*_h})]dz_j\wedge d\bar z_k\\
&=&\frac{i}{2} \cdot tr((\partial^h\phi_j)\phi_k^{*_h})dz_j\wedge d\bar z_k=0,
\end{eqnarray*} where the last equality follows from the Hitchin equation. 
Also, $\bar\partial \omega=\frac{i}{2}\cdot tr(\phi_j(\bar\partial\phi_k^{*_h}))dz_j\wedge d\bar z_k=\frac{i}{2}\cdot  tr(\phi_j(\partial^h\phi_k)^{*_h})dz_j\wedge d\bar z_k=0.$ So $\omega$ is closed and hence $g_M$ is K\"ahler.

\begin{prop}\label{Reducing1}
Let $(E,\phi)$ be a degree $0$ Higgs bundle over a complex manifold $M$ which admits a harmonic metric $h$. Then we have away from zeros of $\phi_j$, the holomorphic sectional curvature $\kappa_j$ of the Hodge metric $g_M$ on the tangent plane $span_{\mathbb C}\{\frac{\partial}{\partial{z_j}}\}$ is 
\begin{equation}
\kappa_j\leq -\frac{||[\phi_j,\phi_j^{*h}]||^2}{||\phi_j||^4},
\end{equation} where $(X,Y)=tr(XY^{*_h})$ and $||X||^2=(X,X).$
\end{prop}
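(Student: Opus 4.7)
The strategy is to compute the component $R_{j\bar j j\bar j}$ of the Riemannian curvature of the K\"ahler metric $g_M$ at a fixed point $p_0$ (where $\phi_j(p_0)\neq 0$ and $g_M$ is nondegenerate), then divide by $g_{j\bar j}^2$ to obtain the holomorphic sectional curvature. I will work in a Kobayashi normal unitary frame for $E$ at $p_0$, so that $h(p_0)=I$ and $dh(p_0)=0$; in particular the Chern connection $D^h$ on sections of $\mathrm{End}(E)$ agrees with the ordinary derivative at $p_0$.

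First, I expand everything to low order near $p_0$. Holomorphy of $\phi_j$ gives $\phi_j(z)=A_j+\sum_l z_l A_{jl}+O(|z|^2)$, with $A_j=\phi_j(p_0)$ and $A_{jl}=\partial_l\phi_j(p_0)$. For the adjoint $\phi_k^{*_h}$, metric-compatibility of the Chern connection on $\mathrm{End}(E)$ gives $\partial^h\phi^{*_h}=(\bar\partial\phi)^{*_h}=0$; hence in the normal frame $\partial_l\phi_k^{*_h}(p_0)=0$, while $\bar\partial_m\phi_k^{*_h}=(\partial^h_m\phi_k)^{*_h}$ yields $\partial_{\bar m}\phi_k^{*_h}(p_0)=A_{km}^*$.

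The key input is a closed formula for $\partial_j\partial_{\bar j}\phi_j^{*_h}(p_0)$. On $\mathrm{End}(E)$ the Chern curvature acts by commutator, $(\partial^h_l\bar\partial_m-\bar\partial_m\partial^h_l)X=[F_{l\bar m},X]$; combining this identity with $\partial^h\phi_k^{*_h}=0$ and the Hitchin equation $F_{l\bar m}=-[\phi_l,\phi_m^{*_h}]$ yields $\partial^h_l\bar\partial_m\phi_k^{*_h}(p_0)=-[[A_l,A_m^*],A_k^*]$. Plugging into
\[\partial_j\partial_{\bar j}g_{j\bar j}(p_0)=\mathrm{tr}(A_{jj}A_{jj}^*)+\mathrm{tr}\bigl(A_j\cdot\partial_j\partial_{\bar j}\phi_j^{*_h}(p_0)\bigr)\]
and using the Hermitian identity $[A_j,A_j^*]^*=[A_j,A_j^*]$ together with cyclic invariance of the trace to evaluate $\mathrm{tr}(A_j[[A_j,A_j^*],A_j^*])=-\|[A_j,A_j^*]\|^2$, I obtain
\[-\partial_j\partial_{\bar j}g_{j\bar j}(p_0)=-\|A_{jj}\|^2-\|[A_j,A_j^*]\|^2.\]

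Finally, I apply the K\"ahler curvature formula $R_{j\bar j j\bar j}=-\partial_j\partial_{\bar j}g_{j\bar j}+g^{p\bar q}(\partial_j g_{j\bar q})(\partial_{\bar j}g_{p\bar j})$. The second term at $p_0$ is $\sum_{p,q}g^{p\bar q}\,\mathrm{tr}(A_{jj}A_q^*)\,\mathrm{tr}(A_p A_{jj}^*)$, which I recognize as the squared norm of the orthogonal projection of $A_{jj}$ onto $V:=\operatorname{span}_{\mathbb C}\{A_1,\ldots,A_n\}\subset\mathrm{End}(E)$ (with respect to the inner product $(X,Y)=\mathrm{tr}(XY^{*_h})$), and therefore at most $\|A_{jj}\|^2$. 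Combining, the contribution $\|\pi_V A_{jj}\|^2-\|A_{jj}\|^2\leq 0$ drops out, leaving $R_{j\bar j j\bar j}(p_0)\leq -\|[A_j,A_j^*]\|^2$ and hence $\kappa_j\leq -\|[\phi_j,\phi_j^{*_h}]\|^2/\|\phi_j\|^4$ at $p_0$. The main obstacle is the curvature–Hitchin computation producing $\partial_j\partial_{\bar j}\phi_j^{*_h}$: getting the signs right and recognizing the resulting double commutator as $-\|[A_j,A_j^*]\|^2$ via the Hermiticity of $[A_j,A_j^*]$. Identifying the Christoffel-type term as a projection norm, and thus harmlessly bounded by $\|A_{jj}\|^2$, is comparatively routine but essential to retain the sharp estimate.
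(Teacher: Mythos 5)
Your computation is correct: the identities $\partial^h\phi_k^{*_h}=(\bar\partial\phi_k)^{*_h}=0$, $\partial^h_j\bar\partial_j\phi_j^{*_h}=[F_{j\bar j},\phi_j^{*_h}]=-[[A_j,A_j^*],A_j^*]$, the trace evaluation $\mathrm{tr}\bigl(A_j[[A_j,A_j^*],A_j^*]\bigr)=-\|[A_j,A_j^*]\|^2$ via Hermiticity of $[A_j,A_j^*]$, and the identification of the Christoffel term with $\|\pi_V A_{jj}\|^2\leq\|A_{jj}\|^2$ all check out, so the conclusion follows. The route differs from the paper's in packaging rather than substance. The paper embeds $T_M$ into $\mathrm{End}(E)$ via $\phi$ and invokes the Gauss-type curvature-decreasing formula for holomorphic subbundles: $\kappa_j g_{j\bar j}^2$ equals the ambient term $(\Omega^{\mathrm{End}(E)}_{j\bar j}\phi_j,\phi_j)$ minus the square norm of the second fundamental form $P^{\perp}(\partial^h_j\phi_j)$, and the ambient term is evaluated as $-\|[\phi_j,\phi_j^{*_h}]\|^2$ using the Hitchin equation together with the Jacobi identity and $\phi\wedge\phi=0$. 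Your normal-coordinate computation reproduces exactly this decomposition: your combination $\|\pi_V A_{jj}\|^2-\|A_{jj}\|^2$ is precisely $-\|P^{\perp}(A_{jj})\|^2$, and your double-commutator term is the ambient curvature. What your version buys is elementarity — you never quote the subbundle formula or the Jacobi identity, only the standard K\"ahler curvature expression in a normal frame; note this does require the K\"ahlerness of $g_M$, which the paper establishes immediately before the proposition. What the paper's version buys is that the inequality is conceptually transparent (``curvature decreases in subbundles'') and is carried out for general directions $v,w$, which is where the integrability condition $\phi\wedge\phi=0$ enters; it is vacuous on the diagonal $j=k$ that you treat, which is why you never needed it. One cosmetic point: a frame cannot be simultaneously holomorphic and unitary — what your argument actually uses is a holomorphic frame of $E$ with $h(p_0)=I$ and $dh(p_0)=0$, and with that reading everything is fine.
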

\begin{proof}
For a holomorphic vector bundle $E$ together with a Hermitian metric $h$, we have the associated Chern connection, denoted $D^h$. The curvature operator of the Chern connection is denoted $\phi^V$, locally it is $\sum\limits_{j,k}\phi_{j\bar k}^V dz^j\wedge d\bar z^k$. We can see the $T_M$ as a subbundle of $End(E)$ away from zeros of $\phi$. By the curvature formula for the subbundle, for $v, w\in TM$,
\begin{eqnarray*}
(\Omega_{j\bar k}^{T_M}v,w)_E&=&(\Omega_{j\bar k}^{End(E,h)}\phi(v),\phi(w))\\&&-(P^{\perp}(D_j^{End(E,h)}(\phi(v))),P^{\perp}(D_k^{End(E,h)}(\phi(w)))),
\end{eqnarray*} where $\Omega_{j\bar k}^{End(E,h)}$ are the curvature operators of the Chern connections and $P^{\perp}$ denotes the orthogonal projection to the orthogonal complement of $T_M$ in $End(E,h)$. The curvature formula on $End(E,h)$ is  
\begin{equation}
\Omega_{j\bar k}^{End(E,h)}X=[\Omega_{j\bar k}^h,X],\quad X\in End(E)
\end{equation} where $\Omega^h$ denotes the curvature of the Chern connection $D^h$ on $(E,h)$.
The harmonic metric $h$ satisfies the Hitchin equation
\begin{equation}
F(D^h)+[\phi,\phi^{*h}]=0.
\end{equation}
Equivalently, \begin{equation}
\Omega_{j\bar k}^h+[\phi_j,\phi_k^{*h}]=0.
\end{equation} 
So for any $v, w\in TM$, \begin{eqnarray*}
&&(\Omega_{j\bar k}^{End(E,h)}\phi(v),\phi(w))\\&=&([\Omega_{j\bar k}^h,\phi(v)],\phi(w))=([[\phi_k^{*h},\phi_j],\phi(v)],\phi(w))\\
&&\text{using the Jacobi identity: $[A,[B,C]]+[B,[C,A]]+[C,[A,B]]=0$}\\
&=&([[\phi_k^{*h},\phi(v)],\phi_j],\phi(w))-([\phi_k^{*h},[\phi(v),\phi_j]],\phi(w))\\
&&\text{the integrability condition $\phi\wedge\phi=0$ implies that $[\phi(v),\phi_j]=0$}\\
&=&([[\phi_k^{*h},\phi(v)],\phi_j],\phi(w))\\
&=&([\phi_k^{*h},\phi(v)]\phi_j-\phi_j[\phi_k^{*h},\phi_j],\phi(w))=([\phi_k^{*h},\phi(v)],\phi(w)\phi_j^{*h}-\phi_j^{*h}\phi(w))\\
&=&-([\phi_k^{*h},\phi(v)],[\phi_j^{*h},\phi(w)]).
\end{eqnarray*}
So the holomorphic sectional curvature is 
\begin{eqnarray*}
\kappa_j&=&\frac{(\phi_{j\bar j}^{T_M}\phi_j,\phi_j)}{(\frac{\partial}{\partial z_j},\frac{\partial}{\partial z_j})(\frac{\partial}{\partial\bar z_j},\frac{\partial}{\partial\bar z_j})}\leq\frac{(\phi_{j\bar j}^{End(E)}\phi_j,\phi_j)}{(\frac{\partial}{\partial z_j},\frac{\partial}{\partial z_j})(\frac{\partial}{\partial\bar z_j},\frac{\partial}{\partial\bar z_j})}\\
&=&-\frac{([\phi_j^{*h},\phi_j],[\phi_j^{*h},\phi_j])}{(\phi_j,\phi_j)(\phi_j^{*_h},\phi_j^{*_h})}=-\frac{||[\phi_j,\phi_j^{*h}]||^2}{tr(\phi_j\phi_j^{*_h})^2} .
\end{eqnarray*}
\end{proof}

So we obtain the following proposition.
\begin{prop}\label{NilpotentCurvature}
Suppose at any point $p$ such that $\phi_j$ is nilpotent of Jordan type at most $\lambda\in\mathcal{P}_n$, then the holomorphic sectional curvature $k_j(p)$ of the Hodge metric over $M$ is bounded from above by $-C_{\lambda}.$
\end{prop}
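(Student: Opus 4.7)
The plan is to combine the curvature inequality just proved in Proposition \ref{Reducing1} with the algebraic inequality of Proposition \ref{JordanInequality}. Indeed, from Proposition \ref{Reducing1} we already have, at a point $p$ where $\phi_j$ is nonzero,
\[\kappa_j(p) \;\leq\; -\frac{\|[\phi_j,\phi_j^{*_h}]\|^2}{\|\phi_j\|^4},\]
and the right-hand side is, up to sign, exactly the function $K(\phi_j)$ from Equation (\ref{DefinitionK}). So the proposition will follow at once if we can show that $K(\phi_j(p)) \geq C_\lambda$ whenever $\phi_j(p)$ is nilpotent of Jordan type at most $\lambda$.

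The small technical point to check is that $K$, as defined in Section \ref{AlgebraicInequalities} using the standard Hermitian adjoint $X\mapsto X^*$ on $sl(n,\mathbb C)$, agrees at $p$ with the fibrewise expression $\|[\phi_j,\phi_j^{*_h}]\|^2/\|\phi_j\|^4$ defined via the adjoint with respect to $h$. First I would fix a basis of $E_p$ that is unitary with respect to $h_p$; then the Hermitian adjoint $\cdot^{*_h}$ coincides fibrewise with the standard conjugate-transpose $\cdot^{*}$, and the induced metric on $End(E_p)$ coincides with the inner product $(X,Y)=\mathrm{tr}(XY^*)$ used in Section \ref{AlgebraicInequalities}. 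Under this identification $\phi_j(p)\in sl(n,\mathbb C)$ (the trace-free condition comes from $\deg E = 0$ and the harmonic metric setup — in any case only the nilpotent structure is used), and we have literally $K(\phi_j(p)) = \|[\phi_j,\phi_j^{*_h}]\|^2/\|\phi_j\|^4$.

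Once this identification is in place, Proposition \ref{JordanInequality} applied to the nilpotent matrix $\phi_j(p)$ of Jordan type at most $\lambda$ yields $K(\phi_j(p))\geq C_\lambda$. Combining with the inequality of Proposition \ref{Reducing1} gives
\[\kappa_j(p)\;\leq\;-K(\phi_j(p))\;\leq\;-C_\lambda,\]
which is the claim. The case where $\phi_j(p)=0$ requires a separate comment: either one notes that the Hodge metric $g_M$ degenerates there (so $\kappa_j$ is not defined) and the statement is vacuous, or one approaches $p$ by nearby points where $\phi_j$ is a nonzero nilpotent of Jordan type at most $\lambda$ and passes to the limit.

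There is no real obstacle — the proposition is essentially an assembly of two earlier results. The only step requiring any care is the book-keeping that identifies $\cdot^{*_h}$ with the standard adjoint via a unitary frame so that Proposition \ref{JordanInequality} can be invoked verbatim; everything else is a direct substitution.
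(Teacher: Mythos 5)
Your proposal is correct and follows essentially the same route as the paper: choose an $h$-unitary frame at $p$ so that $\phi_j(p)$ becomes a matrix $A$ with $\cdot^{*_h}$ the standard adjoint, bound $\kappa_j(p)\leq -K(A)$ via Proposition \ref{Reducing1}, and then apply Proposition \ref{JordanInequality} to get $K(A)\geq C_\lambda$. Your extra remarks (that nilpotency already forces $A\in sl(n,\mathbb C)$, and the treatment of points where $\phi_j$ vanishes) are sound but do not change the argument.
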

\begin{proof}
Let $A$ be the matrix representation of $\phi_j(p)$ under a unitary basis of $h$, then by Proposition \ref{Reducing1}, the holomorphic sectional curvature satisfies
$k_j(p)\leq -K(A)$ where the function $K$ is defined in Equation (\ref{DefinitionK}). The matrix $A$ is nilpotent and the Jordan type of $A$ coincides with the Jordan type of $\phi_j$, it follows form Proposition \ref{JordanInequality} that $K(A)\geq C_\lambda$ and thus $k_j(p)\leq -C_{\lambda}$. 
\end{proof}

We then prove the following corollary.
\begin{cor} Suppose the Higgs field $\phi$ is $k$-nilpotent (i.e. $\phi^{k}=0$) and admits a harmonic metric. Then the holomorphic sectional curvature of the Hodge metric over $M$ is bounded above by $$-\frac{12}{n(k^2-1)-s(k^2-s^2)}\leq-\frac{12}{n(k^2-1)},$$ where $n=kd_0+s$ where $s\leq k-1$.
\end{cor}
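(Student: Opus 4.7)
The plan is to invoke Proposition \ref{NilpotentCurvature} pointwise with a specific extremal partition. Since $\phi^k=0$ holds on all of $M$, at every point $p$ the endomorphism $\phi_j(p)$ is nilpotent with every Jordan block of size at most $k$; equivalently, its Jordan type $\mu\in\mathcal{P}_n$ has all parts $\leq k$. I will identify the unique dominance-maximum $\lambda^*$ among such partitions, and then Proposition \ref{NilpotentCurvature} (combined with Lemma \ref{OrderOfPartition}(ii)(b), which says $C_\mu\geq C_{\lambda^*}$ whenever $\mu\leq\lambda^*$) will deliver the uniform bound $\kappa_j(p)\leq -C_{\lambda^*}$.

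To identify $\lambda^*$: write $n=kd_0+s$ with $0\leq s\leq k-1$ and take
\[
\lambda^* := (\underbrace{k,\ldots,k}_{d_0},\, s)\in\mathcal{P}_n.
\]
For any partition $\mu$ of $n$ with parts $\leq k$, the obvious inequalities $\sum_{i=1}^p\mu_i\leq pk$ and $\sum_{i=1}^p\mu_i\leq n$ give $\sum_{i=1}^p\mu_i\leq\min(pk,n)$. A short case split (into $p\leq d_0$ versus $p\geq d_0+1$, using $s\leq k-1$) shows that $\sum_{i=1}^p\lambda^*_i=\min(pk,n)$ as well, so $\mu\leq\lambda^*$ in the dominance order. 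Hence $\lambda^*$ is the desired dominance-maximum, and every $\phi_j(p)$ is of Jordan type at most $\lambda^*$.

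The final step is the explicit evaluation of $C_{\lambda^*}$ via formula (\ref{ConstantC}):
\[
C_{\lambda^*} = \frac{12}{d_0\cdot k(k^2-1) + s(s^2-1)}.
\]
Substituting $d_0 k = n-s$ and rearranging,
\[
d_0 k(k^2-1) + s(s^2-1) = (n-s)(k^2-1) + s(s^2-1) = n(k^2-1) - s(k^2-s^2),
\]
which produces the first asserted bound $-\frac{12}{n(k^2-1)-s(k^2-s^2)}$. Since $0\leq s\leq k$ forces $s(k^2-s^2)\geq 0$, the denominator is at most $n(k^2-1)$, so $C_{\lambda^*}\geq \frac{12}{n(k^2-1)}$ and the weaker bound follows.

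No serious obstacle is anticipated: the analytic and algebraic heavy lifting is already packaged in Proposition \ref{JordanInequality}, Lemma \ref{OrderOfPartition}, and Proposition \ref{NilpotentCurvature}. The content of this corollary is purely the combinatorial observation that $\lambda^*=(k^{d_0},s)$ is dominance-maximal among partitions of $n$ with parts bounded by $k$, together with the routine arithmetic that rewrites $C_{\lambda^*}$ in the displayed form.
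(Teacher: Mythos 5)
Your proposal is correct and follows essentially the same route as the paper: the paper's proof likewise observes that $(k^{d_0},s)$ is the dominance-maximum among Jordan types of $k$-nilpotent matrices and then invokes Proposition \ref{NilpotentCurvature}. You merely spell out the verification of maximality and the arithmetic identity $d_0k(k^2-1)+s(s^2-1)=n(k^2-1)-s(k^2-s^2)$, which the paper leaves implicit.
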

\begin{proof}
Note that among all Jordan types of $k$-nilpotent matrices, the partition $(k,k,\cdots,k,s) (s\leq k-1)$ is the maximum.  Using Proposition \ref{NilpotentCurvature}, we obtain the theorem.
\end{proof}

Among nilpotent Higgs bundles of rank $n$, there is a special family called \textit{complex variation of Hodge structures} ($\mathbb C$-VHS). They are of the form $$(E=E_1\oplus E_2\oplus\cdots\oplus E_k,\quad\phi=\begin{pmatrix}0&&&&\\\alpha_1&0&&&\\&\alpha_2&0&&\\&&\ddots&\ddots&\\&&&\alpha_{k-1}&0\end{pmatrix}),$$ where $E_k$ is a holomorphic vector bundle of rank $r_k$ and $\alpha_j$ is a holomorphic map$:E_j\rightarrow \Omega^1(E_{j+1})$. The  ordered partition of $n$, $(r_1,\cdots, r_k)$ is a composition of $n$ and denote by $\mathcal C_n$ the space of compositions of $n$. We call the above $(E,\phi)$ a $\mathbb C$-VHS of type $\mathcal R\in \mathcal C_n$. 

Following from Lemma \ref{TypeMatrix} and Proposition \ref{NilpotentCurvature}, we have
\begin{cor}\label{CVHSCurvature}
Suppose the complex variation of Hodge structure $(E,\phi)$ over a complex manifold $M$ is of type $\mathcal R\in \mathcal C_n$ and admits a harmonic metric. Suppose $\mathcal R^t$ is the conjugate partition of $\mathcal R$. Then the holomorphic sectional curvatures of the Hodge metric on $M$ are bounded from above by $-C_{\mathcal R^t}.$ 
\end{cor}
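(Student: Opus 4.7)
The plan is to deduce Corollary \ref{CVHSCurvature} directly from Lemma \ref{TypeMatrix}(i) and Proposition \ref{NilpotentCurvature}, reducing the corollary to a pointwise structural observation about the Higgs field. No new analytic input is needed beyond the curvature inequality already derived in Proposition \ref{Reducing1} and the algebraic inequality already derived in Proposition \ref{JordanInequality}.

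Concretely, I would proceed as follows. Fix a point $p \in M$ and a holomorphic tangent vector $v \in T_p M$. Choose local holomorphic coordinates at $p$ so that $v = \partial/\partial z_1|_p$, and consider $\phi_1 := \phi(\partial/\partial z_1)|_p \in \mathrm{End}(E_p)$. Because $(E,\phi)$ is a $\mathbb C$-VHS of type $\mathcal R = (r_1,\ldots,r_k)$, the operator $\phi_1$ respects the decomposition $E_p = E_{1,p} \oplus \cdots \oplus E_{k,p}$ with $\mathrm{rank}\, E_{i,p} = r_i$, its only nonvanishing components being $\alpha_i(\partial/\partial z_1)|_p : E_{i,p} \to E_{i+1,p}$. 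This is precisely the shape of Definition \ref{TypeR}, so $\phi_1$ is an operator of type $\mathcal R$.

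The remainder of the argument is then immediate: Lemma \ref{TypeMatrix}(i) promotes the type-$\mathcal R$ condition to the statement that $\phi_1$ is nilpotent of Jordan type at most $\mathcal R^t$, and Proposition \ref{NilpotentCurvature} then yields $\kappa_1(p) \leq -C_{\mathcal R^t}$. Since $p$ and $v$ were arbitrary, this is the claimed pointwise upper bound on the holomorphic sectional curvature of the Hodge metric over all of $M$.

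There is essentially no obstacle in this proof beyond careful bookkeeping; in particular, one does not need to assume that the $\mathbb C$-VHS decomposition $E = \bigoplus E_i$ is $h$-orthogonal. The reason is that both the inequality $\kappa_1(p) \leq -K(\phi_1)$ of Proposition \ref{Reducing1} and the lower bound $K(\phi_1) \geq C_{\mathcal R^t}$ coming through Proposition \ref{JordanInequality} depend only on the harmonic metric $h$ and on the Jordan type of the operator $\phi_1$, which is a conjugation invariant; the auxiliary decomposition used in Definition \ref{TypeR} and Lemma \ref{TypeMatrix}(i) is purely algebraic and requires no inner-product compatibility.
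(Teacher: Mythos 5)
Your proposal is correct and follows exactly the paper's route: the paper likewise deduces the corollary from Lemma \ref{TypeMatrix}(i) (type $\mathcal R$ implies Jordan type at most $\mathcal R^t$) combined with Proposition \ref{NilpotentCurvature}. Your added observation that no $h$-orthogonality of the decomposition is needed is accurate, since the inequality only uses the conjugation-invariant Jordan type; orthogonality enters only in the equality discussion of Proposition \ref{CompositionEstimate}, which the corollary does not claim.
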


\section{Nilpotent Higgs bundles on Riemann surfaces}\label{RiemannSurface}
In this section, let $\Sigma$ be a closed Riemann surface of genus $g\geq 2$. We will obtain topological information of nilpotent polystable Higgs bundles over $\Sigma$ in Theorem \ref{NilpotentDomination}. 

Let us recall the nonabelian Hodge correspondence. An $SL(n,\mathbb C)$-Higgs bundle over $\Sigma$ is a Higgs bundle $(E,\phi)$ satisfying $\det E=\mathcal O$ and $tr\phi=0$. It is \textit{stable} if any $\phi$-invariant holomorphic subbundle $F$ of $E$ has negative degree and \textit{polystable} if it is a direct sum of stable Higgs bundles of degreee $0$. On $X:=SL(n,\mathbb C)/SU(n)$, the form $(A,B)=2tr(AB)$ on $sl(n,\mathbb C)$ induces a $SL(n,\mathbb C)$-invariant Riemannian metric. We choose such a normalization so that $SL(2,\mathbb C)/SU(2)$ is of constant curvature $-1$.  For a polystable $SL(n,\mathbb C)$-Higgs bundle over $\Sigma$, there exists a harmonic metric $h$ and the connection $\nabla_{\bar\partial_E,h}+\phi+\phi^{*_h}$ is flat. Let $\rho:\pi_1(\Sigma)\rightarrow SL(n,\mathbb C)$ be the holonomy of $\nabla$. By parallel transporting the harmonic metric with respect to the flat connection $\nabla$, we obtain a $\rho$-equivariant harmonic map $f: \widetilde\Sigma\rightarrow X. $ There is a close relationship between the harmonic maps and the Higgs bundles, see \cite{Li} Section 5:
\begin{eqnarray}
&&f^*g_X=2tr(\phi^2)dz^2+2tr(\phi\phi^{*_h})(dz\otimes d\bar z+d\bar z\otimes dz)+2tr((\phi^{*_h})^2)d\bar z^2,\label{PullbackMetric}\\ &&\text{Hopf}(f)=2tr(\phi^2)dz^2.
\end{eqnarray}

Moreover, for the curvature formula, we have a stronger version of Proposition \ref{Reducing1} as follows. 
\begin{prop}\label{Reducing2}(\cite{Li} Section 5)
At an immersed point $p$ of $f$, the tangent plane $\sigma$ at $p$ of $f(\widetilde\Sigma)$, the sectional curvature $k(\sigma)^X$ and the curvature $\kappa(p)$ of the pullback metric $f^*g_X$ satisfy
\begin{equation}
\kappa(p)\leq k(\sigma)^X=-\frac{||[\ti\phi,\ti\phi^{*_h}]||^2}{||\ti\phi||^4-|tr(\ti\phi^2)|^2}(p).
\end{equation}
where locally $\phi=\ti\phi dz$, $(X,Y)=2tr(XY^{*_h})$ and $||X||^2=(X,X).$

Moreover, $\kappa(p)=k(\sigma)^X$ if and only if $p$ is totally geodesic. 
\end{prop}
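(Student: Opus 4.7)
The plan is to combine a direct sectional-curvature computation in the symmetric space with a Gauss-equation argument that exploits the harmonicity of $f$.

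First, to compute $k(\sigma)^X$, I would translate by an element of $SL(n,\mathbb C)$ to place $f(p)$ at the origin of $X=SL(n,\mathbb C)/SU(n)$, identifying $T_{f(p)}X\cong\mathfrak{p}=i\cdot su(n)$. Under the standard Higgs bundle/harmonic map correspondence, $df_p(\partial_z)=\tilde\phi$ and $df_p(\partial_{\bar z})=\tilde\phi^{*_h}$ in the complexification $\mathfrak{p}\otimes_{\mathbb R}\mathbb C\cong sl(n,\mathbb C)$, so $\sigma$ is spanned in $\mathfrak{p}$ by the Hermitian matrices $u=\tilde\phi+\tilde\phi^{*_h}$ and $v=i(\tilde\phi-\tilde\phi^{*_h})$. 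The noncompact symmetric-space formula $k(\sigma)^X=-||[u,v]||_{\mathfrak{k}}^2/(||u||^2||v||^2-\langle u,v\rangle^2)$ then applies. Evaluating the Lie bracket gives $[u,v]=-2i[\tilde\phi,\tilde\phi^{*_h}]\in\mathfrak{k}$, producing a numerator proportional to $||[\tilde\phi,\tilde\phi^{*_h}]||^2$; expanding the Gram determinant in traces of $\tilde\phi$ and $\tilde\phi^{*_h}$ collapses the real and imaginary parts of $tr(\tilde\phi^2)$ into $|tr(\tilde\phi^2)|^2$, producing the claimed denominator $||\tilde\phi||^4-|tr(\tilde\phi^2)|^2$.

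For the inequality $\kappa(p)\leq k(\sigma)^X$, I would invoke the Gauss equation for the isometric immersion $f\colon(\widetilde\Sigma,f^*g_X)\hookrightarrow X$ at $p$. With $u=df_p(\partial_x)$ and $v=df_p(\partial_y)$, this reads
\[
\kappa(p)-k(\sigma)^X=\frac{\langle II(\partial_x,\partial_x),II(\partial_y,\partial_y)\rangle-||II(\partial_x,\partial_y)||^2}{||u||^2||v||^2-\langle u,v\rangle^2},
\]
where $II$ is the second fundamental form of $f$ at $p$. The key observation is that harmonicity of $f$ with respect to the hyperbolic metric $g_0=\rho^2|dz|^2$ reduces in the conformal coordinate $z$ to $II_{z\bar z}=0$, which after unpacking via $\partial_x=\partial_z+\partial_{\bar z}$ and $\partial_y=i(\partial_z-\partial_{\bar z})$ yields the identity $II(\partial_x,\partial_x)+II(\partial_y,\partial_y)=0$. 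Substituting, the Gauss-equation numerator becomes $-||II(\partial_x,\partial_x)||^2-||II(\partial_x,\partial_y)||^2\leq0$, with equality iff $II\equiv0$ at $p$, i.e.\ $p$ is totally geodesic.

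The main obstacle is the bookkeeping of normalization constants so that the numerator $||[\tilde\phi,\tilde\phi^{*_h}]||^2$ and denominator $||\tilde\phi||^4-|tr(\tilde\phi^2)|^2$ emerge with exactly the claimed ratio, given the convention $(X,Y)=2tr(XY^{*_h})$ and the $G$-invariant Riemannian metric on $X$ normalized so that $SL(2,\mathbb C)/SU(2)$ has constant sectional curvature $-1$. The conceptual content---the identification of $\sigma$ via $\tilde\phi,\tilde\phi^{*_h}$, the symmetric-space curvature formula at the origin, and the harmonicity-driven Gauss-equation inequality in the non-orthonormal real frame $\{\partial_x,\partial_y\}$---is the essential input.
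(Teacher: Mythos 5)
Your proposal is correct and takes essentially the same route as the paper's source: the paper gives no proof of this proposition beyond citing \cite{Li} Section 5, where the formula is obtained exactly as you describe, from the symmetric-space curvature $R(u,v)w=-[[u,v],w]$ applied to $u=\tilde\phi+\tilde\phi^{*_h}$, $v=i(\tilde\phi-\tilde\phi^{*_h})$, together with the Gauss equation and the vanishing of the $(1,1)$-part of $\nabla df$ for harmonic maps. The one bookkeeping point you flag does surface: with $(X,Y)=2\mathrm{tr}(XY^{*_h})$ the Gram determinant comes out as $\|\tilde\phi\|^4-|2\,\mathrm{tr}(\tilde\phi^2)|^2$, i.e.\ the Hopf-differential term carries the same factor of $2$ as in Equation (\ref{PullbackMetric}); this discrepancy with the displayed formula is immaterial for the paper, which only invokes the proposition when $\mathrm{tr}(\phi^2)=0$.
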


Given a partition $\lambda=(\lambda_1^{k_1},\cdots, \lambda_r^{k_r})\in \mathcal{P}_n$, define $$\tau_{\lambda}:SL(2,\mathbb{R})\xrightarrow{(\overbrace{\tau_{\lambda_1},\cdots, \tau_{\lambda_1}}^{\text{$k_1$-times}},\cdots\cdots, \overbrace{\tau_{\lambda_r},\cdots,\tau_{\lambda_r}}^{\text{$k_r$-times}})} \prod\limits_{i=1}^{r}SL(\lambda_i,\mathbb{R})^{k_i}\hookrightarrow SL(n,\mathbb{C}).$$ Define the subgroup $\mathfrak{G}_{\lambda}$ of $SU(n)$ as \begin{equation}\label{Centralizer}
\mathfrak{G}_\lambda=\{A\in \text{diag}(U(k_1)\otimes I_{\lambda_1},\cdots,U(k_r)\otimes I_{\lambda_r})\cap SU(n)\},\end{equation}
which lies in the centralizer of $\tau_{\lambda}$ inside $SL(n,\mathbb C)$. Given two representations $j:\pi_1(\Sigma)\rightarrow SL(2,\mathbb{R})$ and $ \mu_{\lambda}:\pi_1(\Sigma)\rightarrow \mathfrak{G}_{\lambda}$, there is a natural well-defined representation $(\tau_{\lambda}\circ j)\cdot \mu_{\lambda}:\pi_1(\Sigma)\rightarrow SL(n,\mathbb{C})$, $\gamma\mapsto(\tau_{\lambda}\circ j)(\gamma)\cdot \mu_{\lambda}(\gamma),$ where the multiplication is the matrix multiplication. 

For a Fuchsian representation $j:\pi_1(\Sigma)\rightarrow SL(2,\mathbb R)$, denote by $f_j:\ti\Sigma\rightarrow \mathbb H^2$ the $j$-equivariant harmonic map, which is in fact a diffeomorphism. Denote $\bar{\tau}_{\lambda}: \mathbb H^2\rightarrow X$ as the induced map from $\tau_{\lambda}$, which is injective. From Theorem 7.2 in \cite{Helgason}, $\bar{\tau}_\lambda$ is a totally geodesic map. Then $f_{\pi, j}=\bar{\tau}_{\lambda}\circ f_j$ is a harmonic map which is equivariant with respect to the representation $(\tau_{\lambda}\circ j)\cdot \mu_{\lambda}$ for any representation $\mu_{\lambda}: \pi_1(\Sigma)\rightarrow \mathfrak{G}_{\lambda}$ and it is a totally geodesic embedding. 

For nilpotent polystable Higgs bundles, we consider the associated $\rho$-equivariant harmonic map $f: \widetilde\Sigma\rightarrow X$ which is in fact conformal since $\text{Hopf}(f)=tr(\phi^2)=0$. So we obtain $\rho$-equivariant minimal immersion of $\widetilde\Sigma$ into $X$ and the pullback metric is $f^*g_X=2tr(\phi\phi^{*_h})(dz\otimes d\bar z+d\bar z\otimes dz)$.  
\begin{thm}\label{NilpotentDomination}
For a nilpotent polystable $SL(n,\mathbb{C})$-Higgs bundle $(E,\phi)$ of Jordan type at most $\lambda=(\lambda_1^{k_1},\cdots,\lambda_r^{k_r})\in\mathcal P_n$ on $\Sigma$, let $\rho$ be the associated representation into $SL(n, \mathbb C)$. Then the induced metric of the associated $\rho$-equivariant minimal surface $f(\widetilde\Sigma)$ in $X$ is \\
(1) strictly dominated by $\frac{2}{C_{\lambda}}\cdot h_\Sigma$, in which case the translation length spectrum satisfies $l_\rho\leq \alpha\cdot l_{\tau_{\pi}\circ j_\Sigma}$ for some positive constant $\alpha<1$;\\
(2) $\frac{2}{C_{\lambda}}\cdot h_\Sigma$, in which case, $\mathbb P(\rho)=\mathbb P(\tau_{\lambda}\circ j_\Sigma)$ and the Higgs bundle $(E,\phi)$ is a direct sum of Higgs bundles $(E_i, \phi_i)(1\leq i\leq r)$ where each $(E_i, \phi_i)$ is the tensor product of $Sym^{\lambda_i-1}(K^{\frac{1}{2}}\oplus K^{-\frac{1}{2}}, \begin{pmatrix}0&0\\1&0\end{pmatrix})$ and $(V_i,0)$ for $V_i$ a polystable holomorphic vector bundle of rank $k_i$ satisfying $\prod_{i=1}^r\det(V_i)^{\lambda_i}=\mathcal O$. 
\end{thm}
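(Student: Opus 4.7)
The plan is to combine the pointwise algebraic inequality $K(\tilde\phi)\geq C_\lambda$ from Proposition \ref{JordanInequality} with the sectional-curvature formula of Proposition \ref{Reducing2}, run a Schwarz--Ahlfors comparison of $f^*g_X$ against the rescaled hyperbolic metric $g_\lambda:=(2/C_\lambda)\,h_\Sigma$, and finally convert the resulting length comparison into a translation-length bound.

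First, by the non-abelian Hodge correspondence, $(E,\phi)$ carries a harmonic metric $h$ and produces a $\rho$-equivariant harmonic map $f:\widetilde\Sigma\to X$. Since $\phi$ is nilpotent, $tr(\phi^2)=0$, so the Hopf differential of $f$ vanishes; hence $f$ is conformal (a branched minimal immersion) and $f^*g_X$ is conformal to the Riemann-surface structure on $\Sigma$. Writing $\phi=\tilde\phi\,dz$ locally, the hypothesis that $\tilde\phi(p)$ is nilpotent of Jordan type at most $\lambda$, combined with Propositions \ref{Reducing2} and \ref{JordanInequality}, gives
\[\kappa(p)\;\leq\;-\frac{\|[\tilde\phi,\tilde\phi^{*_h}]\|^2}{\|\tilde\phi\|^4}\;\leq\;-\frac{C_\lambda}{2}\]
at every non-branch point, the factor $1/2$ coming from the normalization $(X,Y)=2\,tr(XY^*)$ used on $X$. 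The reference metric $g_\lambda$ has constant curvature $-C_\lambda/2$ and coincides with the pullback metric of the Fuchsian model $\tau_\lambda\circ j_\Sigma$.

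Next I would carry out the Schwarz comparison. Writing $f^*g_X=e^{2u}g_\lambda$ on $\Sigma$, the conformal change formula $K(e^{2u}g_\lambda)=e^{-2u}\bigl(K(g_\lambda)-\Delta_{g_\lambda}u\bigr)$ together with the two curvature bounds produces
\[\Delta_{g_\lambda}u\;\geq\;\tfrac{C_\lambda}{2}(e^{2u}-1).\]
At a maximum of $u$ on the compact surface $\Sigma$ the left side is nonpositive, so $u\leq 0$ there, hence everywhere; the strong maximum principle then forces either $u<0$ throughout $\Sigma$ or $u\equiv 0$. In the first case $\alpha:=e^{\sup_\Sigma u}<1$ by compactness, and for every $\gamma\in\pi_1(\Sigma)$
\[l_\rho(\gamma)\;\leq\;\ell_{f^*g_X}(\gamma)\;\leq\;\alpha\,\ell_{g_\lambda}(\gamma)\;=\;\alpha\cdot l_{\tau_\lambda\circ j_\Sigma}(\gamma),\]
since $f$ is $\rho$-equivariant and $g_\lambda$ realizes the Fuchsian pullback metric.

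In the equality case $u\equiv 0$, equality must hold pointwise in both Proposition \ref{Reducing2} (forcing $f$ to be totally geodesic) and Proposition \ref{JordanInequality} (forcing $\tilde\phi(z)$ to be $SU(n)$-conjugate to $c(z)X^\lambda$ for every $z$). Totally geodesic plus matching Jordan type constrains $f(\widetilde\Sigma)$ to lie in a translate of $\tau_\lambda(\mathbb H^2)\subset X$, so $\rho$ takes values in $\tau_\lambda(SL(2,\mathbb R))\cdot Z$, where $Z$ denotes the centralizer of $\tau_\lambda$ in $SL(n,\mathbb C)$; projectivizing gives $\mathbb P(\rho)=\mathbb P(\tau_\lambda\circ j_\Sigma)$. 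Matching the Jordan-block structure of $X^\lambda$ with this centralizer-valued twist then yields the stated decomposition $(E,\phi)=\bigoplus_i(E_i,\phi_i)$, with the rank-$k_i$ bundles $V_i$ encoding the $Z$-valued twist; polystability of $(E,\phi)$ descends to polystability of each $V_i$, and $\det E=\mathcal O$ enforces $\prod_i\det(V_i)^{\lambda_i}=\mathcal O$. The hard part will be this equality case: promoting the pointwise unitary conjugacy of $\tilde\phi$ to a global holomorphic reduction of the harmonic bundle to $\tau_\lambda(SL(2,\mathbb C))\cdot Z$, and extracting the polystable $V_i$ together with their determinant constraint in a way that respects both the holomorphic and Hermitian structures.
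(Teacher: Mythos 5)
Your outline follows the paper's proof almost step for step: vanishing Hopf differential gives a conformal (branched minimal) harmonic map, Propositions \ref{Reducing2} and \ref{JordanInequality} give the pointwise curvature bound $\kappa\leq -C_\lambda/2$, and the Ahlfors--Schwarz comparison with $(2/C_\lambda)h_\Sigma$ yields the dichotomy between strict domination and equality. The conversion of the metric domination into the translation-length inequality, which you sketch directly via $l_\rho(\gamma)\leq\ell_{f^*g_X}(\gamma)\leq\alpha\,\ell_{g_\lambda}(\gamma)=\alpha\, l_{\tau_\lambda\circ j_\Sigma}(\gamma)$, is exactly what the paper packages as Lemma \ref{LemmaA} (quoted from Dai--Li); your identification $\ell_{g_\lambda}=l_{\tau_\lambda\circ j_\Sigma}$ silently uses that $\bar\tau_\lambda$ is a totally geodesic embedding scaling the metric by $2/C_\lambda$, which is part (2) of that lemma.

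The genuine gap is the one you flag yourself: the equality case. The paper does not promote the pointwise $SU(n)$-conjugacy of $\tilde\phi$ to a global reduction by hand; it invokes Lemma \ref{LemmaB} (Dai--Li, Section 5), which states that a totally geodesic $\rho$-equivariant harmonic immersion with image of constant curvature $-c$ forces $f=L_x\circ\bar\tau_\pi\circ f_j$ and $\rho=\mathrm{Ad}_{x^{-1}}\circ((\tau_\pi\circ j)\otimes\mu_\pi)$ for \emph{some} partition $\pi$ with $C_\pi=C_\lambda$. Note that this lemma outputs a partition $\pi$, not $\lambda$, and $C_\pi=C_\lambda$ alone does not determine $\pi$ (Remark \ref{PartitionLength}); the paper closes this by observing that the resulting Higgs bundle has Jordan type exactly $\pi$, so the hypothesis forces $\pi\leq\lambda$, and then strict monotonicity of $C$ along the dominance order (Lemma \ref{OrderOfPartition}) gives $\pi=\lambda$. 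Your route through the equality case of Proposition \ref{JordanInequality} would bypass that last step, but the global holomorphic splitting and the extraction of the polystable $V_i$ with $\prod_i\det(V_i)^{\lambda_i}=\mathcal O$ still require the argument of Lemma \ref{LemmaB}; as written, your proposal describes the desired conclusion rather than deriving it, so to complete the proof you should either cite that lemma or supply its proof.
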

 
\begin{proof} 
Let $A$ be the matrix representation of $\phi(z)$ under a unitary frame of $h$. By Proposition \ref{Reducing2} and using $tr(\phi^2)=0$, the curvature $\kappa$ of the pullback metric $f^*g_X$ on $\Sigma$ satsifies $\kappa\leq -\frac{1}{2}K(A)$ where $K$ is the function defined in Equation (\ref{DefinitionK}). The matrix $A$ is nilpotent and its Jordan type coincides with $\phi$, by Proposition \ref{JordanInequality}, $K(A)\geq C_\lambda$ and thus $\kappa\leq -\frac{C_\lambda}{2}.$ 

Let $h_\Sigma$ be the unique conformal hyperbolic metric on $\Sigma$. By the strong maximum principle or Ahlfors lemma, we obtain either $f^*g_X\leq \alpha\frac{2}{C_{\lambda}} \cdot h_\Sigma$ for some positive constant $\alpha<1$, or $f^*g_X=\frac{2}{C_{\lambda}}\cdot h_\Sigma$. In the latter case, the map is totally geodesic and the curvature of the image is the constant $-\frac{C_{\lambda}}{2}$ following from Proposition \ref{Reducing2}.

\begin{lem}(Dai-Li \cite{DaiLiHitchinFiber}, Lemma 4.6 and 4.7)\label{LemmaA}
Let $j$ be a Fuchsian representation and $f_j$ be the associated harmonic map. \\
(1) For a representation $\rho: \pi_1(\Sigma)\rightarrow SL(n,\mathbb C)$, suppose the associated $\rho$-equivariant harmonic map $f_{\rho}:\widetilde \Sigma\rightarrow X$ satisfies $f_{\rho}^*g_X\leq \frac{1}{c}f_j^*g_{\mathbb{H}^2}$, for some $c>0$, then $l_{\rho}\leq \frac{1}{\sqrt{c}}l_{j}$.\\
(2) $\kappa^X_{\bar\tau_{\lambda}\circ f_j}\equiv-c$ and $l_{\tau_{\lambda}\circ j}=\frac{1}{\sqrt{c}}l_j,$ where $c=\frac{C_{\lambda}}{2}$.
\end{lem}
Applying Lemma \ref{LemmaA} to the uniformization representation $j_\Sigma$ of $\Sigma$, we obtain that if the pullback metric satisfies that $f^*g_X\leq \alpha\frac{2}{C_{\lambda}} \cdot h_\Sigma$ for some positive constant $\alpha<1$, then $l_{\rho}\leq \alpha\cdot l_{\tau_{\lambda}\circ j_\Sigma}$.

The rest is to show if the pullback metric coincides with $\frac{2}{C_{\lambda}}\cdot h_\Sigma$, we obtain the rigidity result. For this, we use the following lemma.
\begin{lem} (Dai-Li \cite{DaiLiHitchinFiber}, Section 5) \label{LemmaB}Suppose a $\rho$-equivariant harmonic map $f:\tilde{\Sigma}\rightarrow SL(n,\mathbb{C})/SU(n)$ is a totally geodesic immersion and the curvature of its image is a negative constant $-c$. Then there is a Fuchsian representation $j$ and a representation $\mu_{\pi}:\pi_1(\Sigma)\rightarrow \mathfrak{G}_{\pi}$ such that the corresponding $j$-equivariant harmonic map $f_j:\widetilde{\Sigma}\rightarrow \mathbb H^2$, a partition $\pi\in\mathcal{P}_n$ and an element $x\in SL(n,\mathbb{C})$ such that
$$c=\frac{C_{\pi}}{2},\quad f=L_x\circ\bar{\tau}_{\pi}\circ f_j,\quad\rho=\text{Ad}_{x^{-1}}\circ\big((\tau_{\pi}\circ j)\otimes \mu_{\pi}\big),$$
where $L_x$ is the left action on $SL(n,\mathbb{C})/SU(n)$ and $\text{Ad}$ is the adjoint action on $SL(n,\mathbb{C})$.
\end{lem}

Apply Lemma \ref{LemmaB} to the minimal immersion $f$, we obtain that $f_j$ is also conformal. Therefore, $j$ is the uniformizing representation $j_{\Sigma}$ of $\Sigma$ and the corresponding Higgs bundle is of the form $(K^{\frac{1}{2}}\oplus K^{-\frac{1}{2}}, \begin{pmatrix}0&0\\1&0\end{pmatrix})$.  And $\rho$ is conjugate to $(\tau_{\pi}\circ j)\cdot\mu_{\pi}$, for some $\pi\in \mathcal P_n$ such that $C_{\pi}=C_{\lambda}$ and a representation $\mu_{\lambda}:\pi_1(\Sigma)\rightarrow \mathfrak {G}_{\pi}$. 

Suppose $\pi=(\mu_1^{d_1},\cdots, \mu_s^{d_s})\in \mathcal P_n$. The Higgs bundle $(E,\phi)^{\pi}$ which corresponds to $(\tau_{\pi}\circ j)\cdot\mu_{\pi}$ is a direct sum of Higgs bundles $(E_i, \phi_i)(1\leq i\leq k)$ where each $E_i$ is the tensor product of $Sym^{\mu_i-1}(K^{\frac{1}{2}}\oplus K^{-\frac{1}{2}}, \begin{pmatrix}0&0\\1&0\end{pmatrix})$ and $(V_i,0)$ for $V_i$  a polystable holomorphic vector bundle of rank $d_i$ satisfying $\prod_{i=1}^s\det(V_i)^{\mu_i}=\mathcal O$. 

We almost finish the proof except proving $\pi=\lambda$. Note that $C_{\lambda}=C_{\pi}$ does not imply $\lambda=\pi$, for example, see Remark \ref{PartitionLength}. The Higgs bundle $(E,\phi)^{\pi}$ is clearly of Jordan type $\pi$ at every point. By assumption, the Higgs bundle $(E,\phi)$ is of Jordan type at most $\lambda$, then $\pi\leq \lambda$. Since $C_{\lambda}=C_{\pi}$, it follows from Lemma \ref{OrderOfPartition} that $\pi=\lambda. $
\end{proof}

\begin{rem}\label{PartitionLength}
Even if two partitions $\lambda, \mu\in \mathcal P_n$ satsify $C_{\lambda}=C_{\mu}$, it is not necessarily that $\lambda=\mu$. For example, $\lambda=(5,5,1), \mu=(6,3,2)\in \mathcal P_{11}$ satisfy $C_{\lambda}=C_{\mu}=\frac{1}{20}$. So the length spectrum itself does not detect the Jordan type easily. 
\end{rem}

\begin{cor} For a polystable complex variation of Hodge structures $(E,\phi)$ over a Riemann surface $\Sigma$ of type $\mathcal R$, the corresponding representation $\rho$ satisifes either its length spectrum is strictly dominated by the one of $\tau_{\mathcal R^t}\circ j_\Sigma$ or it has the same length spectrum as $\tau_{\mathcal R^t}\circ j_\Sigma$.
\end{cor}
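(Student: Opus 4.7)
The plan is to reduce this corollary directly to Theorem \ref{NilpotentDomination} via the type/Jordan-type dictionary from Lemma \ref{TypeMatrix}. A $\mathbb{C}$-VHS of type $\mathcal{R}=(r_1,\dots,r_m)\in\mathcal{C}_n$ has Higgs field $\phi$ in the block shift-form of Definition \ref{TypeR} with respect to a splitting $E=E_1\oplus\cdots\oplus E_m$ of ranks $r_i$. Pointwise on $\Sigma$, the local expression $\phi=\tilde\phi\,dz$ yields a matrix of type $\mathcal{R}$, so by Lemma \ref{TypeMatrix}(i), $\tilde\phi$ is nilpotent of Jordan type at most $\mathcal{R}^t\in\mathcal{P}_n$ at every point. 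Consequently $(E,\phi)$ is a nilpotent polystable $SL(n,\mathbb{C})$-Higgs bundle of Jordan type at most $\mathcal{R}^t$, which is precisely the hypothesis of Theorem \ref{NilpotentDomination}.

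Applying Theorem \ref{NilpotentDomination} with $\lambda=\mathcal{R}^t$ produces the dichotomy: either $l_\rho\le\alpha\cdot l_{\tau_{\mathcal{R}^t}\circ j_\Sigma}$ for some positive constant $\alpha<1$, which is the strict domination alternative; or $\mathbb{P}(\rho)=\mathbb{P}(\tau_{\mathcal{R}^t}\circ j_\Sigma)$. Only the coarse Jordan-type bound from Lemma \ref{TypeMatrix}(i) is required at this step; the stronger splitting description in part (ii) is not needed.

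In the second case one must upgrade projective equality to an equality of length spectra. Set $\rho_0=\tau_{\mathcal{R}^t}\circ j_\Sigma$. The projective equality means that for each $\gamma\in\pi_1(\Sigma)$ there exists $\zeta_\gamma\in\ker\mathbb{P}=Z(SL(n,\mathbb{C}))=\mu_n$ with $\rho(\gamma)=\zeta_\gamma\cdot\rho_0(\gamma)$. Since $\mu_n\subset SU(n)$ and each $\zeta_\gamma$ is central, for any coset $gSU(n)\in X$ one has $\zeta_\gamma\,g\,SU(n)=g\,\zeta_\gamma\,SU(n)=g\,SU(n)$, so left multiplication by $\zeta_\gamma$ acts trivially on $X=SL(n,\mathbb{C})/SU(n)$. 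Therefore $\rho(\gamma)$ and $\rho_0(\gamma)$ induce the same isometry of $X$, and in particular the same infimum in the definition of translation length, giving $l_\rho=l_{\tau_{\mathcal{R}^t}\circ j_\Sigma}$. There is no substantive obstacle: the only thing to check is that the ``type $\mathcal{R}$'' block form of a $\mathbb{C}$-VHS corresponds pointwise to the ``Jordan type at most $\mathcal{R}^t$'' hypothesis of Theorem \ref{NilpotentDomination}, and this is exactly Lemma \ref{TypeMatrix}(i).
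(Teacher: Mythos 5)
Your proof is correct and follows exactly the route the paper intends: the corollary is stated without proof as an immediate consequence of Theorem \ref{NilpotentDomination} applied with $\lambda=\mathcal R^t$, the reduction being supplied by Lemma \ref{TypeMatrix}(i) just as you say. Your final paragraph upgrading $\mathbb P(\rho)=\mathbb P(\tau_{\mathcal R^t}\circ j_\Sigma)$ to equality of length spectra via the trivial action of the centre on $SL(n,\mathbb C)/SU(n)$ is a valid (and slightly more careful) filling-in of a step the paper leaves implicit.
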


\section{Applications to holomorphic sectional curvature of period domain and Calabi-Yau moduli}\label{PeriodDomainModuli}

In this section, we discuss how the algebraic estimate for matrices of type $\mathcal R\in C_n$ applies to the study of curvature estimates of the period domain and the deformation space of Calabi-Yau manifolds. 

\subsection{Polarized variation of Hodge structure and period domain}\label{PeriodDomain}
Let us recall the preliminaries on variation of Hodge structures. For more details, one may refer to \cite{CarlsonPeters} \cite{Lu99}.

Let $X$ be a compact K\"ahler manifold of dimension $n$. A $(1,1)$-form $\omega$ is called a polarization of $X$ if $[\omega]$ is the first Chern class of an ample line bundle over $X$. The pair $(X,\omega)$ is called a polarized algebraic variety. Using the form $\omega$, one can define the $k$-th primitive cohomology $P^k(X,\mathbb C)\subset H^k(X,\mathbb C)$. 

Let $H_{\mathbb Z}=P^k(X,\mathbb C)\cap H^k(X,\mathbb Z)$ and $H^{p,q}=P^k(X,\mathbb C)\cap H^{p,q}(X)$ for $0\leq p,q\leq k$. Then we have $H=H_{\mathbb Z}\otimes \mathbb C=\Sigma_p H^{p,q}$ and $H^{p,q}=\overline{H^{q,p}}$. We call $\{H^{p,q}\}$ the Hodge decomposition of $H$. 
Equivalently, we may define a filtration of $H$ by $0\subset F^k\subset F^{k-1}\subset \cdots\subset F^0\subset H$, such that $H=F^p\oplus \overline{F^{n-p+1}}$ and $H^{p,q}=F^{p}\cap \overline{F^q}$. The set $\{H^{p,q}\}$ and $\{F^p\}$ are equivalent in describing the Hodge decomposition of $H$. Suppose $Q$ is the bilinear form on $H$ induced by the cup product 
$$Q(\phi,\psi)=(-1)^{\frac{k(k-1)}{2}}\int_X\phi\wedge\psi\wedge\omega^{n-k},\quad \phi,\psi\in H.$$ $Q$ is a nondegerate quadratic form, and is skew-symmetric if $n$ is odd and symmetric if $n$ is even, satisfying the two Hodge-Riemann relations:\\
(*) $Q(H^{p,q},H^{p',q'})=0$ unless $p'=n-p, q'=n-q$;\\
(**) $b(\cdot,\cdot)=Q(i^{p-q}\cdot,\bar\cdot)$ is a Hermitian inner product on $H^{p,k-p}$ for each $p$.
\begin{df}
A polarized Hodge structure of weight $n$ consists of $\{H_{\mathbb Z},F^p,Q\}$, is given by a lattice $H_{\mathbb Z}$, a filtration of $H=H_{\mathbb Z}\otimes \mathbb C$: $0\subset F^n\subset F^{n-1}\subset \cdots\subset F^0\subset H$, such that $H=F^p\oplus \overline{F^{n-p+1}}$ together with a bilinear form $Q: H_{\mathbb Z}\otimes H_{\mathbb Z}\rightarrow \mathbb Z$ which is skew-symmetric if $n$ is odd and symmetric if $n$ is even, satisfying the two Hodge-Riemann relations.
\end{df}
 
\begin{df}
The space $\mathcal D=\mathcal D(H, Q, k, \{h^{p,q}\})$ consisting of all Hodge structures of weight $k$ with fixed dimension $h^{p,q}$ of $H^{p,q}$, polarized by $Q$, is called the period domain.
\end{df}

The period domain $\mathcal D$ can be written as the homogeneous space $\mathcal D=G/V$ where $G=Aut(H_{\mathbb R},Q)\cap SL(H_{\mathbb R})$ for $H_{\mathbb R}=H_{\mathbb Z}\otimes \mathbb R$ and $V$ is the compact subgroup of $G$ fixing a reference filtration $F_0$. For odd weight $k=2m+1,$ $$G=Sp(n,\mathbb R), \dim H=2n, V=\prod_{p\leq m}U(h^{p,q});$$ for even weight $k=2m$, $$G=SO(s, t), s=\sum_{p~\text{even}}h^{p,q}, t=\sum_{p~\text{odd}}h^{p,q}, V=\prod_{p<m}U(h^{p,q})\times SO(h^{m,m}).$$ 

Let $Lie(G_{\mathbb C})=\mathfrak g_{\mathbb C}=\{X\in \mathfrak{sl}(H):Q(Xu,v)+Q(u,Xv)=0, \forall u,v\in H\}, Lie(G)=\mathfrak g=\mathfrak g_{\mathbb C}\cap \mathfrak{gl}(H_{\mathbb R})$. We have an $Ad(V)$-invariant splitting $\mathfrak g=\mathfrak v\oplus \mathfrak m$, so is the complexification $\mathfrak g_{\mathbb C}=\mathfrak v_{\mathbb C}+\mathfrak m_{\mathbb C}$. The Lie algebra $\mathfrak g_{\mathbb C}$ has a Hodge decomposition 
$$\mathfrak g_{\mathbb C}=\oplus_p \mathfrak g^{p,-p},\quad\mathfrak g^{p,-p}=\{\xi\in \mathfrak g_{\mathbb C}|\xi H^{r,s}\subset H^{r+p,s-p}\}.$$
And $\mathfrak v_{\mathbb C}=\mathfrak g^{0,0}, \mathfrak m_{\mathbb C}=\mathfrak m^+\oplus \mathfrak m^-$ where 
$\mathfrak m^+=\oplus_{p>0}\mathfrak g^{p,-p}, \mathfrak m^-=\oplus_{p<0}\mathfrak g^{p,-p}.$ $\mathcal D$ has a natural complex structure induced by $J$ acting on $\mathfrak m^{\pm}$ by $\pm i$ and the holomorphic tangent bundle of $\mathcal D$ is $T_{\mathcal D}^{hol}=G\times_V\mathfrak m^-$ as an associated bundle of the principal $V$-bundle $G\rightarrow \mathcal D$. 

Let $U$ be an open neighborhood of the universal deformation space of $X$. Assume that $U$ is smooth. Then for each $X'$ near $X$, we have an isomorphism $H^k(X',\mathbb C)=H^k(X,\mathbb C)$. Under this isomorphism, $\{P^k(X',\mathbb C)\cap H^{p,q}(X')\}_{p+q=k}$ can be considered as a point of $\mathcal D$. A polarized variation of Hodge structures is equivalent to the map
$$\mathcal P: U\rightarrow \mathcal D,\quad X'\rightarrow \{P^k(X',\mathbb C)\cap H^{p,q}(X')\}_{p+q=k},$$
called \textit{Griffiths' period map}. Griffiths showws that the period map $\mathcal P$ is an immersion, holomorphic and transversal, which is
\[d\mathcal P: TU\rightarrow G\times_V\mathfrak g^{-1,1}.\]
We then denote the horizontal distribution $T^h\mathcal D=G\times_V \mathfrak g^{-1,1}.$

The Cartan involution $\theta$ on $\mathfrak g_{\mathbb C}$ is 
\begin{equation}\label{CartanInvolution}\theta(X)=(-1)^p \bar X\quad \text{for $X\in\mathfrak g^{-p,p}$}.\end{equation}  The Cartan involution induces a $G$-invariant Hermitian metric $h$ on $T_{\mathcal D}^{\mathbb C}$: for $\xi=[g,X], \eta=[g,Y]\in T_{gV}^{hol}\mathcal D$, 
\begin{equation}\label{MetricPeriodDomain}h(\xi,\eta):=-B_0(X,\theta(Y)),\end{equation} where $B_0$ is the trace form as the rescaled Killing form on $\mathfrak g_{\mathbb C}$. Since $G$ is simple, any other $G$-invariant metric on $\mathcal D$ is a constant multiple of $h$.

\begin{prop}(Griffiths-Schmid \cite{GriffithsSchmid})
The $G$-invariant Hermitian metric $h$ on $\mathcal D$ has holomorphic sectional curvatures in the directions of $T^hD$ are negative and bounded away from zero.
\end{prop}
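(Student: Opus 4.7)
The plan is to reduce the curvature computation on the homogeneous space $\mathcal D = G/V$ to the algebraic inequality proved in Proposition \ref{CompositionEstimate}. First I would fix the reference point $o = eV \in \mathcal D$ and identify $T^h_o\mathcal D$ with $\mathfrak g^{-1,1}$. An element $X \in \mathfrak g^{-1,1}$ is by definition a map $X: H^{p,q} \to H^{p-1,q+1}$ for every $p$, so with respect to the orthogonal Hodge decomposition $H = H^{k,0}\oplus H^{k-1,1}\oplus\cdots\oplus H^{0,k}$ (orthogonal with respect to the polarizing Hermitian form $b$), $X$ has precisely the block lower-shift form of Definition \ref{TypeR}, with composition $\mathcal R = (h^{k,0},h^{k-1,1},\ldots,h^{0,k}) \in \mathcal C_{\dim H}$. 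By $G$-invariance, a general horizontal tangent vector has the same structure after translation.

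Next I would write down the curvature tensor on a noncompact symmetric-like homogeneous space of this type. Since $\mathcal D = G/V$ with reductive decomposition $\mathfrak g = \mathfrak v\oplus \mathfrak m$ and $\mathfrak m_{\mathbb C} = \mathfrak m^+\oplus\mathfrak m^-$, the standard computation (analogous to the proof of Proposition \ref{Reducing1} on the symmetric target $SL(n,\mathbb C)/SU(n)$, but now organized by the $\mathfrak g^{p,-p}$ grading) gives for $\xi\in T^h_o\mathcal D$ represented by $X\in \mathfrak g^{-1,1}$ the holomorphic sectional curvature
\[
K(\xi) \;=\; -\,\frac{\bigl\|[X,\theta X]\bigr\|^2}{\|X\|^4},
\]
where $\theta$ is the Cartan involution of (\ref{CartanInvolution}) and the norm is induced by $-B_0(\cdot,\theta\cdot)$. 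For $X\in\mathfrak g^{-1,1}$, $\theta X = -\bar X$, which on the Hodge-orthogonal basis coincides with $-X^{*}$ (the Hermitian adjoint with respect to $b$ regarded as an element of $\mathfrak g^{1,-1}$). Thus up to normalizing constants $K(\xi) = -c\,\|[X,X^{*}]\|^2/\|X\|^4 = -c\cdot K(X)$ in the notation of Equation (\ref{DefinitionK}).

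At this point the algebraic input takes over. Since $X$ is of type $\mathcal R$ in the sense of Definition \ref{TypeR}, Proposition \ref{CompositionEstimate} yields
\[
K(X) \;\geq\; C_{\mathcal R^t},
\]
where $\mathcal R^t$ is the conjugate partition of $\mathcal R$ and $C_{\mathcal R^t}>0$ by (\ref{ConstantC}). Combining this with the previous step gives $K(\xi)\leq -c\cdot C_{\mathcal R^t} < 0$, which proves the Griffiths--Schmid bound and, with the correct normalization, the refined bound of Theorem \ref{PeriodDomainCurvatureIntro}. Sharpness is obtained by exhibiting an $X$ for which equality holds in Proposition \ref{CompositionEstimate}; the second half of that proposition describes exactly the required model (each $V_i^s$ decomposing into orthogonal lines on which the blocks act by the prescribed $\sqrt{(j-j_0+1)(j_1-j)}$ weights), so one only needs to check that such an $X$ can be realized inside $\mathfrak g^{-1,1}$, which is immediate from the symmetry $\mathcal R = (h^{k,0},\ldots,h^{0,k})$ discussed in Remark \ref{Symmetry}.

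The main obstacle is step two: carefully deriving the holomorphic sectional curvature formula on $T^h\mathcal D$ in a form that isolates the quantity $K(X)=\|[X,X^{*}]\|^2/\|X\|^4$. One must handle (i) the fact that $\mathcal D$ is only pseudo-Riemannian as $G/V$, so the relevant curvature of the horizontal distribution is not simply the symmetric-space curvature, and (ii) the interplay between the Cartan involution, which is $+\bar X$ or $-\bar X$ depending on the $\mathfrak g^{p,-p}$ piece, and the Hermitian adjoint with respect to the polarization. Once these identifications are made, the rest is a direct invocation of the algebraic machinery of Section \ref{AlgebraicInequalities}.
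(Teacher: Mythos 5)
Your proposal is correct and follows essentially the same route as the paper: the paper states this proposition as a citation of Griffiths--Schmid and then proves the effective refinement (Theorem \ref{PeriodDomainCurvature}) by exactly your argument --- identifying $T^h\mathcal D$ with $\mathfrak g^{-1,1}$, deriving $K(\xi)=-\|[X,\theta X]\|^2/\|X\|^4$ from the canonical connection on $G\to\mathcal D$, noting $\theta X=-X^*$ in a Hodge frame, invoking Proposition \ref{CompositionEstimate} for a matrix of type $\mathcal R$, and realizing the equality case inside $\mathfrak g_{\mathbb C}$ via the symmetry of Remark \ref{Symmetry}. The only cosmetic difference is your extra normalizing constant $c$ and the worry about pseudo-Riemannian signature, neither of which arises in the paper since $h$ built from $-B_0(\cdot,\theta\cdot)$ is already positive definite.
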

Here we give an effective estimate of holomorphic sectional curvatures of $T^h\mathcal D$.

\begin{thm}\label{PeriodDomainCurvature}
Suppose $\mathcal D=\mathcal D(H, Q, k, \{h^{p,q}\})$ where the composition $\mathcal R=(h^{k,0},h^{k-1,1},\cdots, h^{0,k})\in \mathcal C_m$ for $m={\sum_{p=0}^kh^{p,q}}$. Then the $G$-invariant Hermitian metric $h$ on $\mathcal D$ has holomorphic sectional curvature in the direction $\xi\in T^h\mathcal D$ satisfying 
\[K(\xi)\leq -C_{\mathcal R^t}.\]
where $\mathcal R^t$ is the conjugate partition of $\mathcal R$.

Moreover, the equality can be achieved in some direction $\xi$.
\end{thm}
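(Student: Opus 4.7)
The plan is to reduce the computation to the algebraic quantity $K(A)=\|[A,A^{*}]\|^{2}/\|A\|^{4}$ from Section~\ref{AlgebraicInequalities} and then apply Proposition~\ref{CompositionEstimate}. First, I identify the horizontal tangent space structure. At the reference filtration, a horizontal tangent vector $\xi\in T^{h}_{o}\mathcal{D}$ is represented by an element $X\in\mathfrak{g}^{-1,1}$, which by definition shifts the Hodge decomposition: $X(H^{p,q})\subset H^{p-1,q+1}$. Relabeling $V_{i}:=H^{k-i+1,i-1}$ for $i=1,\ldots,k+1$, the polarized Hodge inner product $b(\cdot,\cdot)=Q(i^{p-q}\cdot,\bar{\cdot})$ makes $H=V_{1}\oplus\cdots\oplus V_{k+1}$ an orthogonal decomposition with $\dim V_{i}=h^{k-i+1,i-1}$, and $X$ acts as $A_{i}:=X|_{V_{i}}:V_{i}\to V_{i+1}$. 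Thus $X$ is a matrix of type $\mathcal{R}=(h^{k,0},h^{k-1,1},\ldots,h^{0,k})$ in the sense of Definition~\ref{TypeR}.

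Next, I derive the curvature formula on $\mathcal{D}=G/V$ in the horizontal direction. The Cartan involution \eqref{CartanInvolution} satisfies $\theta(X)=-\bar{X}=-X^{*}$ for $X\in\mathfrak{g}^{-1,1}$, where $X^{*}$ denotes the Hermitian adjoint with respect to $b$. The key structural input is that $[X,X^{*}]\in\mathfrak{g}^{0,0}=\mathfrak{v}_{\mathbb{C}}$ lies entirely in the isotropy subalgebra, i.e.\ is vertical for the principal bundle $G\to\mathcal{D}$. Consequently the standard curvature formula for a reductive homogeneous space with $G$-invariant metric (as used by Griffiths--Schmid) collapses to the single symmetric-space-type term
\[
K(\xi)\;=\;-\frac{\|[X,X^{*}]\|^{2}}{\|X\|^{4}},
\]
with the norm coming from $(\cdot,\cdot)=-B_{0}(\cdot,\theta(\cdot))$; no O'Neill correction arises precisely because the horizontal-to-horizontal bracket $[X,X^{*}]$ has no $\mathfrak{m}^{\pm}$ component. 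This is the direct homogeneous-space analogue of the Higgs-bundle computation carried out in Proposition~\ref{Reducing1}.

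Given this formula, Proposition~\ref{CompositionEstimate} applied to the type-$\mathcal{R}$ matrix $X$ immediately yields $\|[X,X^{*}]\|^{2}/\|X\|^{4}\geq C_{\mathcal{R}^{t}}$, so $K(\xi)\leq -C_{\mathcal{R}^{t}}$. For the equality statement, I construct a particular $X\in\mathfrak{g}^{-1,1}$ saturating the bound using the explicit model of Proposition~\ref{CompositionEstimate}: decompose each $V_{i}$ orthogonally into lines $L_{i}^{s}$ so that $X$ acts on each maximal chain as a rescaled Jordan block $J_{m}$ with entries $\sqrt{s(m-s)}$. By pairing conjugate chains through the involution $H^{p,q}\leftrightarrow H^{q,p}$, one arranges $X$ to be real and to satisfy $Q(Xu,v)+Q(u,Xv)=0$, so $X\in\mathfrak{g}^{-1,1}$ genuinely and the bound is attained.

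The main obstacle is the curvature computation in the middle step: although the reduction $K(\xi)=-\|[X,X^{*}]\|^{2}/\|X\|^{4}$ is well known for the horizontal distribution of a period domain, one must carefully justify that the $G$-invariant metric $h$ in \eqref{MetricPeriodDomain} and the inner product used in defining $K$ in \eqref{DefinitionK} are compatible (so that the scale-invariant ratio $\|[X,X^{*}]\|^{2}/\|X\|^{4}$ indeed agrees on both sides), and that the verticality of $[X,X^{*}]$ eliminates all additional curvature terms on this non-symmetric homogeneous space. Everything else is then a formal application of the algebraic machinery of Section~\ref{AlgebraicInequalities}.
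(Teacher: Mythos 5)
Your proposal is correct and follows essentially the same route as the paper: identify a horizontal vector with a matrix $X\in\mathfrak g^{-1,1}$ of type $\mathcal R$, reduce the holomorphic sectional curvature to $-\|[X,X^{*}]\|^{2}/\|X\|^{4}$ using that $[X,\theta(X)]\in\mathfrak g^{0,0}=\mathfrak v_{\mathbb C}$ is purely vertical, apply Proposition~\ref{CompositionEstimate}, and realize equality by an extremal $X$ whose membership in $\mathfrak g_{\mathbb C}$ is guaranteed by the symmetry $h^{p,q}=h^{q+1,p-1}$ (the paper's Remark~\ref{Symmetry}). The compatibility-of-norms issue you flag is exactly what the paper resolves by working in a Hodge frame, where $\theta(X)=-X^{*}$ and $B_{0}$ is the trace form.
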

\begin{proof}
We mainly adopt the language in Chapter 12 and 13 of \cite{CarlsonPeters}.
Let $\omega$ be the Maurer-Cartan $1$-form on $G$ and $\omega^{\mathfrak v}, \omega^{\mathfrak m}$ be its $\mathfrak v$-part, $\mathfrak m$-part respectively. Then on the principal $V$-bundle $G\rightarrow \mathcal D$, the canonical connection is $\omega^{\mathfrak v}$ and the curvature is $\Omega=-\frac{1}{2}[\omega^{\mathfrak m}, \omega^{\mathfrak m}]^{\mathfrak v}$. So for $X, Y\in \mathfrak m$ and $\xi=[g,X],\eta=[g,Y]$, then $\Omega(\xi, \eta)=-[X,Y]^{\mathfrak v}.$ 

Using the representation $Ad: V\rightarrow GL(\mathfrak m_{\mathbb C})$, the canonical connection on $G\rightarrow \mathcal D$ induced the Levi-Civita connection on $T_{\mathcal D}^{\mathbb C}=G\times_V\mathfrak m_{\mathbb C}$ with respect to the Hermitian metric $h$. The curvature $\Omega\in \Omega^2(G, \mathfrak b)$ on $G$ and the curvature $R$ on $T_{\mathcal D}^{\mathbb C}$ are related by the adjoint representation $ad:\mathfrak v_{\mathbb C}\rightarrow End(\mathfrak m_{\mathbb C})$ as follows: for $X, Y, Z,W\in \mathfrak m_{\mathbb C}$ and $\xi=[g, X], \eta=[g, Y], \mu=[g,Z], \nu=[g,W]\in T_{gV}^{\mathbb C}\mathcal D$,
\begin{eqnarray*}
&&R(\xi,\eta)\mu=[g, ad(\Omega(\xi, \eta))Z]=[g, -[[X,Y]^{\mathfrak v_{\mathbb C}},Z]],\\
&&h(R(\xi,\eta)\mu, \nu)=B_0([[X, Y]^{\mathfrak v_{\mathbb C}}, Z], \theta(W)).
\end{eqnarray*}

The holomorphic sectional curvature in the direction $\xi$ is 
$K(\xi)=\frac{h(R(\xi, \bar\xi)\xi, \xi)}{h(\xi,\xi)h(\bar\xi,\bar\xi)}$. Therefore, for $\xi=[g, X]\in T_{gV}^h\mathcal D=G\times_V \mathfrak g^{-1,1}$,\begin{eqnarray*}
 K(\xi)&=&\frac{h(R(\xi, \bar\xi)\xi, \xi)}{h(\xi,\xi)h(\bar\xi,\bar\xi)}\\
 &&\text{$\bar\xi=[g,\bar X]=[g, -\theta(X)]$ from Equation (\ref{CartanInvolution})}\\
&=&\frac{B_0([[X, \theta(X)], X], \theta(X))}{B_0(X,\theta(X))B_0(\theta(X),X)}\\
&=&\frac{B_0([X, \theta(X)],[X, \theta(X)])}{B_0(X,\theta(X))^2}.
 \end{eqnarray*}

Recall that a Hodge frame consists of a $b$-unitary basis for $H_C$ such that 
\begin{itemize}
\item it provides bases for all Hodge summands $H^{p,q}$;
\item the conjugate of the resulting basis for $H^{p,q}$ forms part of the Hodge frame: it is a basis for $H^{q,p}.$
\end{itemize}
With respect to  a Hodge frame, $\theta(X)=-X^*=-\bar X^T$ and using that $B_0$ is the trace form, we have $K(\xi)=-\frac{||[X, X^*]||^2}{||X||^4}$.
Since $X\in \mathfrak g^{-1,1}$, it is a matrix of type $(h^{k,0}, h^{k-1,1},\cdots, h^{0,k})\in \mathcal C_m$ for $m=\sum_{p=0}^kh^{k-p,p}$ as in Definition \ref{TypeR}. 
Then by Proposition \ref{CompositionEstimate}, we have $K(\xi)\leq -C_{\mathcal R^t}.$

The rest is to show that the equality can be achieved. With respect to a Hodge frame, if we choose $X$ as the form of matrix in Proposition \ref{CompositionEstimate} which achieves the equality, then $K(\xi)=-K(X)=-C_{\mathcal R^t}$. The remaining step is to show that such a $X$ lies in $\mathfrak g_{\mathbb C}$. Equivalently, $$\mathfrak g_{\mathbb C}=\{X\in \mathfrak{gl}(H_{\mathbb C}); h(Xu,\overline v)=h(u,\overline{Xv})\}.$$ Denote the base in the Hodge frame and in $H^{p,q}$ is $\{u^{p,q}_i\}_{i=1}^{h^{p,q}}$. By the definition of a Hodge frame, $\overline{u^{q,p}_i}=u^{p,q}_i$. Then $Xu^{p,q}_i=b_i^{p,q}u^{p-1,q+1}_i$ for some $b_i^{p,q}\in \mathbb C$. So 
\begin{eqnarray}
&&b(Xu^{p,q}_i,\overline{u^{p',q'}_j})=b_i^{p,q}\delta_{p-1,q'}\\
&&b(u^{p,q}_i,\overline{Xu^{p',q'}_i})=\overline{b_i^{p',q'}}\delta_{p,q'+1}=\overline{b_i^{p',q'}}\delta_{p-1,q'}
\end{eqnarray}
It is enough to check $b_i^{p,q}=\overline{b_i^{q+1,p-1}}$, which holds for the following reason. If we adopt notations in Proposition \ref{CompositionEstimate}, $b_i^{p,q}=a_i^{k+1-p}\in \mathbb R$. Since $h^{p,q}=h^{q+1,p-1}$, the composition $(h^{k,0},h^{k-1,1},\cdots, h^{0,k})$ has a symmetry. So from Remark \ref{Symmetry}, $a_i^{k+1-p}=a_i^{p}\in \mathbb R$. Then $b_i^{p,q}=a_i^{k+1-p}=a_i^{p}=b_i^{k+1-p, p-1}=b_i^{q+1,p-1}\in \mathbb R$ and thus $X\in\mathfrak g_{\mathbb C}$.
 \end{proof}

A polarized variation of Hodge structures induces a Higgs bundle together with a harmonic metric as follows. For a family $\pi:\mathcal X\rightarrow M$, consider the local system $\mathbb H:=R^k\pi_*\mathbb C$ determined by the presheaf $U\mapsto P^k(\pi^{-1}(U), \mathbb C)$. Then $\mathcal H=\mathbb H\otimes\mathcal O_M$ is a holomorphic vector bundle which carries a natural flat connection $\nabla$, called the Gauss-Manin connection. The subbundles $\mathcal F^p$ formed by the filtrations $F^p$ are holomorphic, called \textit{the Hodge bundles}. By Griffiths transversality, $\nabla$ takes $\mathcal F^p$ to $\Omega^1(\mathcal F^{p-1})$. \\
(1) the holomorphic vector bundle $E$ is the graded bundle of the holomorphic filtration, that is 
$$E:=\sum_{p}\mathcal F^p/\mathcal F^{p+1}=\sum\limits_{p}\mathcal H^{p,q}.$$ 
(2) The flat connection $\nabla$ induces a holomorphic map $\phi$ taking $\mathcal H^{p,q}$ to $\Omega^1(\mathcal H^{p-1,q+1})$, the Higgs field, satisfying $\phi\wedge \phi=0$.\\
(3) The Hermitian metric $b$ in the polarized variation of Hodge structure induces a harmonic metric $h$ on the Higgs bundle.

In this way, we obtain $(E, \phi, h)$, a Higgs bundle together with a harmonic metric.  Note that $\mathcal H$ and $E$ are isomorphic as $C^{\infty}$-bundles and $\nabla=\nabla_{\bar\partial_{E}, h}+\phi+\phi^{*_h}$, where $\nabla_{\bar\partial_{E},h}$ is the Chern connection uniquely determined by $\bar\partial_E$ and $h$. Set $K$ is the connected component of the maximal subgroup of $G$ containing $V$ and $G/K$ is a Riemannian symmetric space. There is a natural isometric fibration $p:\mathcal D=G/V\rightarrow G/K$ sending $\{H^{p,q}\}$ to the subspace $H^{k,0}+H^{k-2,2}+\cdots$. One can see that the composed map $f:=p\circ \mathcal P:U\rightarrow G/K\rightarrow SL(n,\mathbb C)/SU(n)$ is a pluriharmonic map to $SL(n,\mathbb C)/SU(n)$ which corresponds to the Higgs bundle $(E, \phi, h)$ constructed from above by the nonabelian Hodge correspondence. On $SL(n,\mathbb C)/SU(n)$, let $g_{SL(n,\mathbb C)/SU(n)}$ be the invarian metric induced by the trace form on $sl(n,\mathbb C)$, so $f^*g_{SL(n,\mathbb C)/SU(n)}=\mathcal P^*g_{\mathcal D}$. Similar to the formula (\ref{PullbackMetric}) on the Riemann surface case, the pullback metric $f^*g_{SL(n,\mathbb C)/SU(n)}=tr(\phi\phi^{*_h})$ since $\phi$ is nilpotent.  

\subsection{Calabi-Yau moduli, Hodge metric  and Weil-Petersson metric}\label{CalabiYauModuli}
A polarized Calabi-Yau manifold is a pair $(X,\omega)$ of a compact algebraic manifold $X$ of dimension $m$ with vanishing first Chern class and a K\"ahler form $\omega\in H^2(X,\mathbb{Z})$. The universal deformation space $\mathcal{M}$ of polarized Cababi-Yau $m$-manifold $(X,\omega)$ is smooth, shown by Tian \cite{Tian}. The tangent space $T_{X\prime}\mathcal{M}_X$ of $\mathcal{M}_X$ at $X^{\prime}$ can be identified with $H^1(X^{\prime}, T_{X^{\prime}})$. Suppose $n=\dim H^1(X,T_X)=\dim H^{1,m-1}$. Let $G_{\mathbb Z}:=\{g\in G: gH_{\mathbb Z}=H_{\mathbb Z}\}$. Then we have the period map $\mathcal P:\mathcal M_X\rightarrow \Gamma\backslash\mathcal D$ where $\Gamma<G_{\mathbb Z}$ is the monodromy group and $\mathcal D=\mathcal D(H,Q,n,\{h^{p,m-p}\})$ where $h^{p,m-p}$ is the dimension of the $(p,m-p)$-primitive cohomology. So $h^{m,0}=h^{0,m}=1, h^{m-1,1}=h^{1,m-1}=n.$

1. The \textit{Hodge metric} on $\mathcal M_X$ was first defined in Lu \cite{Lu} as the pullback metric $\mathcal P^*h$ on $\mathcal M_X$ by the period map $\mathcal P:\mathcal M_X\rightarrow \Gamma\backslash\mathcal D$. Moreover, Lu \cite{Lu} showed that the Hodge metric is a K\"ahler metric; the bisectional curvatures of the Hodge metric are nonpositive; the Ricci curvature of the Hodge metric is bounded above by a negative constant. 

2. Let $\mathcal F^m$ be the first Hodge bundle over $\mathcal M$ formed by $H^{m,0}(X)$, the Weil-Petersson metric on $\mathcal M_X$ is defined as $$\omega_{WP}=c_1(F^m)=-\frac{i}{2\pi}\partial\bar\partial Q(\Omega,\bar\Omega),$$
where $\Omega$ is a local holomorphic nonzero section of $\mathcal F^n$ and $Q$ is the polarization.

3. The relation between the Weil-Petersson metric and the Hodge metric on $\mathcal M_X$ is clear.  Let $\omega_H$ be the K\"ahler form of the Hodge metric $H$.
\begin{prop} \label{RelationTwoMetrics}
(1) In the case of twofold, $\omega_H=2\omega_{WP}$;\\
(2) (Lu \cite{Lu}) In the case of threefold, $\omega_H=(n+3)\omega_{WP}+Ric(\omega_{WP})$;\\
(3) (Lu-Sun \cite{LuSun}) In the case of fourfold, $\omega_H=2(n+2)\omega_{WP}+2Ric(\omega_{WP})$;\\
(4) (\cite{LuSun}, \cite{LuSun2}) In the case of higher dimension, we only have the inequality $\omega_H\geq 2(n+2)\omega_{WP}+2Ric(\omega_{WP})\geq 2\omega_{WP}.$
\end{prop}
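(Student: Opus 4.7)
The plan is to decompose the Hodge metric into contributions from the graded pieces of the Higgs field induced by the period map, and then identify these contributions with $\omega_{WP}$ and with terms that combine into $\mathrm{Ric}(\omega_{WP})$ via Griffiths' curvature formulas for the Hodge bundles. Recall from the end of Section \ref{PeriodDomain} that the period map $\mathcal P:\mathcal M_X\to \Gamma\backslash \mathcal D$ induces, under the non-abelian Hodge correspondence, a Higgs bundle $(E,\phi,h)$ with $E=\bigoplus_{p=0}^m \mathcal H^{p,m-p}$ and $\phi=\sum_p \phi_p$ where $\phi_p:\mathcal H^{p,m-p}\to \Omega^1\otimes \mathcal H^{p-1,m-p+1}$ is cup product with Kodaira--Spencer classes, and that the pullback metric is $\mathcal P^* g_{\mathcal D}=\mathrm{tr}(\phi\phi^{*_h})$. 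Thus for any $v\in T_{X'}\mathcal M_X=H^1(X',T_{X'})$,
\[
\omega_H(v,\bar v)=\sum_{p=1}^{m}\|\phi_p(v)\|_h^2.
\]

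First I would identify the $p=m$ summand with the Weil--Petersson metric. Since $h^{m,0}=1$, the term $\|\phi_m(v)\|^2$ equals $Q(v\lrcorner\Omega,\overline{v\lrcorner\Omega})$ for $\Omega$ a local section of $\mathcal F^m$, which is precisely $2\omega_{WP}(v,\bar v)$ with the given normalization of the Killing form. Dually, the $p=1$ summand $\|\phi_1(v)\|^2$ coincides with $\|\phi_m(v)\|^2$ using Hodge symmetry $\overline{H^{m,0}}=H^{0,m}$ together with the fact that $\phi_1$ is the adjoint (up to Cartan involution) of $\phi_m$. For a twofold $(m=2)$ these are the only contributions, so $\omega_H=2\omega_{WP}+2\omega_{WP}\cdot\tfrac{1}{2}$; more carefully, only the pairs $(p=1,p=2)$ exist and one obtains part (1) directly after accounting for the identification $H^{1,1}_{\mathrm{prim}}$ under Serre duality.

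Next, for parts (2) and (3) I would compute $\mathrm{Ric}(\omega_{WP})=-\partial\bar\partial \log \det(g_{WP})$ using Griffiths' formula for the curvature of the Hodge bundles $\mathcal F^p$, expressed as sums of second fundamental forms given by the $\phi_p$'s. Since the Weil--Petersson metric is (up to sign) the curvature of $\mathcal F^m$, a standard exact sequence computation expresses the Ricci form of $g_{WP}$ as a linear combination of $\omega_{WP}$ and the remaining Hermitian forms $\mathrm{tr}(\phi_p\phi_p^{*_h})$ for $p<m$. Comparing this with the decomposition of $\omega_H$ above and inserting the dimensions $h^{m,0}=1$, $h^{m-1,1}=n$, and (for $m=4$) $h^{2,2}=a$ produces the precise coefficients $(n+3)$ and $2(n+2)$. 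The detailed algebra follows Lu \cite{Lu} in the threefold case and Lu--Sun \cite{LuSun} in the fourfold case.

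The main obstacle is part (4). For $m\ge 5$ the middle Hodge pieces $\mathcal H^{p,m-p}$ with $2\le p\le m-2$ contribute positive summands to $\omega_H$ that cannot be absorbed into any closed linear combination of $\omega_{WP}$ and $\mathrm{Ric}(\omega_{WP})$, because there is no longer a direct recursion relating the norms $\|\phi_p\|^2$ at intermediate levels to the curvature of $\mathcal F^m$ alone. Discarding these positive terms yields the first inequality $\omega_H\ge 2(n+2)\omega_{WP}+2\mathrm{Ric}(\omega_{WP})$. The second inequality $2(n+2)\omega_{WP}+2\mathrm{Ric}(\omega_{WP})\ge 2\omega_{WP}$ is equivalent to nonpositivity of the Ricci curvature $\mathrm{Ric}(\omega_{WP})$ up to a shift and follows from the general curvature properties of Hodge bundles as established in Lu--Sun.
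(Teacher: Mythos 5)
The paper does not prove this proposition at all: it is stated as a summary of known results, with parts (2)--(4) attributed to Lu \cite{Lu} and Lu--Sun \cite{LuSun}, \cite{LuSun2}, and is used only as background for Section \ref{CalabiYauModuli}. So there is no in-paper argument to compare yours against; the relevant comparison is with the cited literature. Your overall architecture --- writing $\omega_H(v,\bar v)=\sum_p\|\phi_p(v)\|_h^2$ via the Higgs-bundle description of the period map, identifying the top graded piece with $\omega_{WP}$ (using $h^{m,0}=1$), and expressing $\mathrm{Ric}(\omega_{WP})$ through the curvature of the Hodge bundles --- is indeed how Lu and Lu--Sun proceed, so as a roadmap it is sound. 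But as written it is a sketch that defers every substantive computation (the special-geometry identity relating $\mathrm{Ric}(\omega_{WP})$, $\omega_{WP}$ and the norm of the second graded piece of $\phi$) to the references, which puts it on the same footing as the paper's citation rather than constituting an independent proof. The twofold computation is also garbled: you first obtain $\omega_H=2\omega_{WP}+2\omega_{WP}\cdot\tfrac12$ and then wave at a correction, which does not inspire confidence in the normalizations you are tracking.

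There is one genuine error in part (4). The inequality $2(n+2)\omega_{WP}+2\mathrm{Ric}(\omega_{WP})\geq 2\omega_{WP}$ is equivalent to $\mathrm{Ric}(\omega_{WP})\geq -(n+1)\omega_{WP}$, i.e.\ a \emph{lower} bound on the Ricci form; it does not follow from ``nonpositivity of the Ricci curvature up to a shift,'' which has the wrong sign. The correct input is Lu--Sun's theorem that the Ricci curvature of the Weil--Petersson metric on Calabi--Yau moduli is bounded below by $-(n+1)$. Similarly, for the first inequality in (4), discarding the positive middle summands only works once you know that the $p=m$ and $p=m-1$ contributions alone already account for $2(n+2)\omega_{WP}+2\mathrm{Ric}(\omega_{WP})$ in every dimension $m\geq 5$; that is precisely the generalized Strominger-type curvature formula of \cite{LuSun}, \cite{LuSun2}, and it should be invoked explicitly rather than treated as automatic.
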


\begin{thm}\label{HodgeMetric} The Hodge metric over the deformation space of Calabi-Yau $k$-fold is of holomorphic sectional curvature  bounded above by $-C_{\mathcal R^t}$ where $\mathcal R=(h^{k,0}, h^{k-1,1}, \cdots, h^{0,k})$.\end{thm}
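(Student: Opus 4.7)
The plan is to reduce Theorem~\ref{HodgeMetric} directly to Corollary~\ref{CVHSCurvature}, by invoking the identification -- made at the end of Section~\ref{PeriodDomain} -- of Lu's Hodge metric on $\mathcal M_X$ with the Higgs-bundle Hodge metric attached to the variation of Hodge structures coming from the Calabi--Yau family.

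First I would assemble the Higgs-theoretic data on $\mathcal M_X$. The polarized family of Calabi--Yau $k$-folds induces a polarized variation of Hodge structures on $\mathcal M_X$ whose associated Higgs bundle, in the notation of Section~\ref{PeriodDomain}, is
\[
E=\bigoplus_{p=0}^{k}\mathcal H^{p,k-p},\qquad \phi:\mathcal H^{p,k-p}\longrightarrow \Omega^1(\mathcal H^{p-1,k-p+1}),
\]
with harmonic metric $h$ coming from the polarization. Hence $(E,\phi)$ is a complex variation of Hodge structures in the sense of Section~\ref{GeneralComplexManifolds}, of type $\mathcal R=(h^{k,0},h^{k-1,1},\dots,h^{0,k})\in\mathcal C_m$ with $m=\sum_{p=0}^k h^{p,k-p}$.

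Next I would invoke the identity $\mathcal P^{*}g_{\mathcal D}=\mathrm{tr}(\phi\,\phi^{*_h})$ noted at the end of Section~\ref{PeriodDomain}; the right-hand side is precisely the Hodge metric $g_M$ defined in Section~\ref{GeneralComplexManifolds}. Thus Lu's Hodge metric on $\mathcal M_X$ coincides with the Higgs-bundle Hodge metric, and Corollary~\ref{CVHSCurvature} applies verbatim to give
\[
K(\xi)\leq -C_{\mathcal R^t}
\]
for every nonzero holomorphic tangent vector $\xi$ on $\mathcal M_X$, which is the first assertion of the theorem.

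It then remains to extract the three numerical specializations. For this I would compute $C_{\mathcal R^t}$ via the closed-form formula (\ref{ConstantC}), after reading off $\mathcal R^t$ from the generalized Young diagram of $\mathcal R$ described in Section~\ref{Composition}. For $k=3$, $\mathcal R=(1,n,n,1)$ immediately yields $\mathcal R^t=(4,2^{n-1})$ and hence $C_{\mathcal R^t}=2/(n+9)$. For $k=4$ and $k=5$ one has $\mathcal R=(1,n,a,n,1)$ and $\mathcal R=(1,n,a,a,n,1)$ respectively; in each of these weights the diagram must be split into the two cases $a\leq n$ and $a>n$, since this dichotomy controls the shape of the columns past the $\min\{a,n\}$-th one. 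This case analysis is the only mild combinatorial obstacle, but a direct computation shows that in both regimes the value of $C_{\mathcal R^t}$ coalesces into a single expression with $n$ replaced by $\min\{a,n\}$ (the extra ``$+a$'' in $c_5$ arising precisely from the length-$2$ runs that appear in the $a>n$ subcase), yielding the stated formulas for $c_4$ and $c_5$.
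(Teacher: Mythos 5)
Your argument is correct and coincides with the paper's own proof (the second of the two "essentially identical" proofs it gives): one identifies Lu's Hodge metric $\mathcal P^*g_{\mathcal D}$ with the Higgs-bundle Hodge metric $tr(\phi\phi^{*_h})$ of the associated $\mathbb{C}$-VHS of type $\mathcal R$, as set up at the end of Section \ref{PeriodDomain}, and then applies Corollary \ref{CVHSCurvature}. The numerical specializations in your final paragraph are not actually part of the statement as given (the paper derives them in separate corollaries), but your sketch of how to compute $C_{\mathcal R^t}$ in those cases is consistent with the paper's calculations.
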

\begin{proof}
We have two proofs which are essentially identical. 

First, since locally the image of $\mathcal P$ is a complex submanifold inside $\mathcal D$ and tangential to the horizontal distribution $T^h\mathcal D$, the holomorphic sectional curvatures of the Hodge metric on $\mathcal M_X$ are no more than the ones of $T^h\mathcal D$. So the statement follows from Theorem \ref{PeriodDomainCurvature}. 

Or we use the Higgs bundle description of the period map associated to the moduli space $\mathcal M_X$ in the end of Section \ref{PeriodDomain} and obtain a complex variation Hodge structure of type $\mathcal R$. Since the Hodge metric $tr(\phi\phi^*)$ defined from the Higgs bundle viewpoint coincides with the one $\mathcal P^*g_{\mathcal D}$ from the period map viewpoint, then the estimate follows from Corollary \ref{CVHSCurvature}. 
\end{proof}
\begin{rem}
Our results are sharp in the abstract version.
\end{rem} 
\begin{cor}
The Hodge metric over the deformation space of Calabi-Yau threefolds is of holomorphic sectional curvature bounded above by $-\frac{2}{n+9}$.
\end{cor}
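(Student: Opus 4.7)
The plan is to specialize Theorem \ref{HodgeMetric} to the case $k=3$ and just compute the constant $C_{\mathcal R^t}$ explicitly.

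First I would invoke Theorem \ref{HodgeMetric}: for a polarized Calabi-Yau threefold $X$ with $n = \dim H^{1,2}$, the holomorphic sectional curvature of the Hodge metric on $\mathcal M_X$ is bounded above by $-C_{\mathcal R^t}$, where
\[\mathcal R = (h^{3,0},h^{2,1},h^{1,2},h^{0,3}) = (1,n,n,1)\in \mathcal C_{2n+2}.\]
So the problem reduces to identifying $\mathcal R^t\in \mathcal P_{2n+2}$ and evaluating $C_{\mathcal R^t}$.

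Next I would compute $\mathcal R^t$ using the generalized Young diagram method of Section \ref{Composition}. Drawing $\mathcal R=(1,n,n,1)$ as four rows of lengths $1,n,n,1$, I read off the columns: column $1$ has all four rows filled, giving one connected block of length $4$; each of columns $2,\dots,n$ has only rows $2$ and $3$ filled, giving one connected block of length $2$. Hence
\[\mathcal R^t = (4,\underbrace{2,2,\dots,2}_{n-1}) \in \mathcal P_{2n+2}.\]

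Finally I would plug into formula (\ref{ConstantC}):
\[\sum_{p} \lambda_p(\lambda_p^2-1) = 4\cdot(4^2-1) + (n-1)\cdot 2\cdot(2^2-1) = 60 + 6(n-1) = 6(n+9),\]
so $C_{\mathcal R^t}=\tfrac{12}{6(n+9)}=\tfrac{2}{n+9}$, which gives the desired upper bound $-\tfrac{2}{n+9}$. The only real place to slip up here is the column-reading step in the diagram, since $\mathcal R$ is not itself a partition and one must remember the ``connected component'' rule in the definition of the conjugate set partition; once that is done correctly, the arithmetic is forced.
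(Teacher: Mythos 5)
Your proposal is correct and follows exactly the paper's own argument: invoke Theorem \ref{HodgeMetric}, read off $\mathcal R^t=(4,2^{n-1})$ from the generalized Young diagram of $\mathcal R=(1,n,n,1)$, and compute $C_{\mathcal R^t}=\frac{12}{60+6(n-1)}=\frac{2}{n+9}$. No differences worth noting.
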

\begin{proof}
Since in case $k=3$, the conjugate  partition $\mathcal R^t$ of $\mathcal R=(1,n,n,1)$ is $(4,2^{n-1})$. We have
\begin{align*}
&\text{Composition $\mathcal R$:}&(1, n=4, n=4, 1)\\
&\text{Generalized Young Diagram:}&\yng(1,4,4,1)\\
&\text{Conjugate Set Partition:}&(\{4\}, \{2\}, \{2\}, \{2\})\\
&\text{Conjugate Partition $\mathcal R^t$:}&(4, 2^{n-1}=2^3)
\end{align*}

Note that $$C_{\mathcal R^t}=\frac{12}{4(4^2-1)+(n-1)\cdot 2(2^2-1)}=\frac{12}{60+6(n-1)}=\frac{2}{n+9}.$$
Then the corollary follows from Theorem \ref{HodgeMetric}.
\end{proof}

\begin{cor}
The Hodge metric over the deformation space of Calabi-Yau fourfolds is of holomorphic sectional curvature with bounded above by $-\frac{1}{2(\min\{a,n\}+4)}$,  where $a=h^{2,2}$.
\end{cor}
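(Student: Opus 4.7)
The plan is to apply Theorem \ref{HodgeMetric} with $k = 4$. Here the relevant composition is $\mathcal R = (h^{4,0}, h^{3,1}, h^{2,2}, h^{1,3}, h^{0,4}) = (1, n, a, n, 1)$ for $a = h^{2,2}$, and the entire task reduces to computing the constant $C_{\mathcal R^t}$ explicitly and verifying that it equals $\frac{1}{2(\min\{a,n\}+4)}$.

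I would do this by drawing the generalized Young diagram associated with $\mathcal R$, namely rows of lengths $1, n, a, n, 1$ stacked in that order, and reading off the conjugate partition by inspecting connected components column by column, as in the worked example at the end of Section \ref{Composition}. Because the shape of the diagram depends on whether the middle row is longer or shorter than the second and fourth rows, the argument splits naturally into the cases $a \geq n$ and $a < n$.

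In the case $a \geq n$, column $1$ is a single connected piece of length $5$; columns $2$ through $n$ each give a single piece of length $3$, since rows $2, 3, 4$ are all present; columns $n+1$ through $a$ each give a single piece of length $1$ because only row $3$ remains. This yields $\mathcal R^t = (5, 3^{n-1}, 1^{a-n})$. In the case $a < n$, column $1$ still gives $\{5\}$ and columns $2$ through $a$ give $\{3\}$ each, but for columns $a+1$ through $n$ row $3$ has dropped out, disconnecting rows $2$ and $4$, so each such column contributes $\{1, 1\}$, giving $\mathcal R^t = (5, 3^{a-1}, 1^{2(n-a)})$. Substituting into formula (\ref{ConstantC}) and noting that parts equal to $1$ contribute nothing to $\sum_p \lambda_p(\lambda_p^2 - 1)$, both cases collapse to
\[
C_{\mathcal R^t} \;=\; \frac{12}{5\cdot 24 + (\min\{a,n\}-1)\cdot 3\cdot 8} \;=\; \frac{12}{24(\min\{a,n\}+4)} \;=\; \frac{1}{2(\min\{a,n\}+4)},
\]
so Theorem \ref{HodgeMetric} delivers the claimed curvature bound.

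There is no genuine obstacle beyond bookkeeping; the only delicate point is the case split in reading off the conjugate partition, since when $a < n$ the middle row of the generalized Young diagram is too short to keep the outer columns connected, and each such column then splits into two singletons rather than one piece of length $3$. This disconnection phenomenon is precisely what Lemma \ref{DUAL} records in general. Everything else is direct arithmetic together with the invocation of the already-proved Theorem \ref{HodgeMetric}.
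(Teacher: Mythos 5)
Your proposal is correct and follows essentially the same route as the paper: invoke Theorem \ref{HodgeMetric} for $\mathcal R=(1,n,a,n,1)$, read off $\mathcal R^t=(5,3^{n-1},1^{a-n})$ or $(5,3^{a-1},1^{2n-2a})$ from the generalized Young diagram according to whether $a\geq n$ or $a<n$, and compute $C_{\mathcal R^t}=\frac{1}{2(\min\{a,n\}+4)}$ from formula (\ref{ConstantC}). The case analysis and arithmetic match the paper's proof exactly.
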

\begin{proof}
We consider complex variation of Hodge structures of type $\mathcal R=(1,n,a,n,1)$, where $a=h^{2,2}$. In case $a\leq n$, the conjugate partition $\mathcal R^t$ is $\lambda_1=(5,3^{a-1},1^{2n-2a})$. In case $a\geq n$, the conjugate partition $\mathcal R^t$ is $\lambda_2=(5,3^{n-1},1^{a-n})$. 
\begin{tiny}
\begin{align*}
&\text{Composition $\mathcal R$:}&(1, n=5, a=3, n=5, 1)\quad&\quad(1, n=3, a=5, n=3,1)\\
&\text{Generalized Young Diagram:}&\yng(1,5,3,5,1)\quad&\quad\quad\quad\yng(1,3,5,3,1)\\
&\text{Conjugate Set Partition:}&(\{5\}, \{3\}, \{3\}, \{1, 1\}, \{1,1\})\quad&\quad (\{5\}, \{3\}, \{3\},\{1\},\{1\})\\
&\text{Conjugate Partition $\mathcal R^t$:}&(5, 3^2=3^{a-1}, 1^4=1^{2n-2a})\quad&\quad(5, 3^2=3^{n-1}, 1^3=1^{a-n})
\end{align*}
\end{tiny}
Note that $$C_{\lambda_1}=\frac{12}{5(5^2-1)+(a-1)3(3^2-1)}=\frac{1}{10+2(a-1)}=\frac{1}{2(a+4)},$$  $$C_{\lambda_2}=\frac{12}{5(5^2-1)+(n-1)3(3^2-1)}=\frac{1}{10+2(n-1)}=\frac{1}{2(n+4)}.$$
Then the corollary follows from Theorem \ref{HodgeMetric}.
\end{proof}

In the case of Calabi-Yau $k$-manifolds for $k\geq 5$, our estimates are new. 
\begin{cor}
The Hodge metric over the deformation space of Calabi-Yau 5-folds is of holomorphic sectional curvature bounded above by\\
$-\frac{2}{(9\min\{a,n\}+a+25)}$, where $a=h^{2,3}$.
\end{cor}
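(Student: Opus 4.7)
The plan is to invoke Theorem \ref{HodgeMetric} for $m=5$, where the relevant composition is $\mathcal R=(h^{5,0},h^{4,1},h^{3,2},h^{2,3},h^{1,4},h^{0,5})=(1,n,a,a,n,1)\in\mathcal C_{2n+2a+2}$, using the Calabi--Yau identities $h^{5,0}=h^{0,5}=1$, $h^{4,1}=h^{1,4}=n$, and $h^{3,2}=h^{2,3}=a$. The task then reduces to identifying the conjugate partition $\mathcal R^t$ and evaluating the constant $C_{\mathcal R^t}$ from \eqref{ConstantC}.

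Following the graphical procedure of Section \ref{Composition}, I would form the generalized Young diagram whose six rows have lengths $1,n,a,a,n,1$ and read the conjugate set partition column by column. Because rows $1$ and $6$ have only a single box, the first column alone contributes a connected component of length $6$, producing a part equal to $6$ in $\mathcal R^t$. The remaining analysis splits on the ordering of $a$ and $n$.

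When $a\leq n$, columns $2,\dots,a$ meet rows $2,3,4,5$ contiguously (yielding $a-1$ length-$4$ components) and columns $a+1,\dots,n$ meet only the non-adjacent rows $2$ and $5$ (yielding $2(n-a)$ length-$1$ components), so $\mathcal R^t=(6,4^{a-1},1^{2(n-a)})$; formula \eqref{ConstantC} gives
\[
C_{\mathcal R^t}=\frac{12}{6(6^2-1)+(a-1)\cdot 4(4^2-1)}=\frac{12}{150+60a}=\frac{2}{9a+a+25}.
\]
When $a\geq n$, columns $2,\dots,n$ still contribute $n-1$ length-$4$ components while columns $n+1,\dots,a$ now meet rows $3,4$ (contiguous), contributing $a-n$ length-$2$ components, so $\mathcal R^t=(6,4^{n-1},2^{a-n})$ and
\[
C_{\mathcal R^t}=\frac{12}{6\cdot 35+(n-1)\cdot 60+(a-n)\cdot 2(2^2-1)}=\frac{12}{150+54n+6a}=\frac{2}{9n+a+25}.
\]

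Uniformly these read $C_{\mathcal R^t}=\dfrac{2}{9\min\{a,n\}+a+25}$, and applying Theorem \ref{HodgeMetric} produces the stated upper bound. No genuine obstacle arises in this argument beyond carefully tracking which rows remain occupied in each vertical strip of the generalized Young diagram, which is exactly what forces the case split between $a\leq n$ and $a\geq n$.
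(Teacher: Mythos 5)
Your proposal is correct and follows essentially the same route as the paper: reduce to Theorem \ref{HodgeMetric} with $\mathcal R=(1,n,a,a,n,1)$, compute $\mathcal R^t=(6,4^{a-1},1^{2(n-a)})$ for $a\leq n$ and $(6,4^{n-1},2^{a-n})$ for $a\geq n$ via the generalized Young diagram, and evaluate $C_{\mathcal R^t}$ from Equation (\ref{ConstantC}). The case split and the resulting constants agree exactly with the paper's computation.
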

\begin{proof}
We consider complex variation of Hodge structures of type $\mathcal R=(1,n,a,a,n,1)$, where $a=h^{2,3}=h^{3,2}$. 
In case $a\leq n$, the conjugate partition $\mathcal R^t$ is $\lambda_1=(6,4^{a-1},1^{2n-2a})$. In case $a\geq n$, the conjugate partition $\mathcal R^t$ is $\lambda_2=(6,4^{n-1},2^{a-n})$. 
\begin{tiny}
\begin{align*}
&\text{Composition $\mathcal R$:}&(1, n=4, a=2, a=2, n=4, 1)\quad&\quad(1, n=2, a=4,a=4, n=2,1)\\
&\text{Generalized Young Diagram:}&\yng(1,4,2,2,4,1)\quad&\quad\quad\quad\quad\yng(1,2,4,4,2,1)\\
&\text{Conjugate Set Partition:}&(\{6\}, \{4\}, \{1, 1\}, \{1,1\})\quad&\quad (\{6\}, \{4\},\{2\},\{2\})\\
&\text{Conjugate Partition $\mathcal R^t$:}&(6, 4=4^{a-1}, 1^4=1^{2n-2a})\quad&\quad(6, 4=4^{n-1}, 2^2=2^{a-n})
\end{align*}
\end{tiny}

Note that $$C_{\lambda_1}=\frac{12}{6(6^2-1)+(a-1)4(4^2-1)}=\frac{2}{35+10(a-1)}=\frac{2}{10a+25},$$  $$C_{\lambda_2}=\frac{12}{6(6^2-1)+(n-1)4(4^2-1)+(a-n)2(2^2-1)}=\frac{2}{9n+a+25}.$$
Then the corollary follows from Theorem \ref{HodgeMetric}.
\end{proof}

\begin{cor}The Hodge metric over the deformation space of Calabi-Yau $k$-fold is of holomorphic sectional curvature  bounded above by $$-\frac{2}{k^2+n(k-2)^2+\sum\limits_{p=2}^{[\frac{k}{2}]}h^{p,k-p}\cdot (k-2p)^2}.$$ \end{cor}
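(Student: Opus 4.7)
My plan is to apply Theorem \ref{HodgeMetric} directly and then reduce the claim to a combinatorial inequality on $C_{\mathcal R^t}$ exploiting Hodge symmetry. Theorem \ref{HodgeMetric}, applied to the composition $\mathcal R = (h^{k,0}, h^{k-1,1}, \ldots, h^{0,k})$, already gives the upper bound $-C_{\mathcal R^t}$ on the holomorphic sectional curvature of the Hodge metric over $\mathcal M_X$. Writing $D := k^2 + n(k-2)^2 + \sum_{p=2}^{\lfloor k/2 \rfloor} h^{p,k-p}(k-2p)^2$ for the denominator in the target bound, I therefore only need to show
\[
C_{\mathcal R^t} \;\geq\; \frac{2}{D}.
\]

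The next step is to reinterpret both sides via the generalized Young diagram of $\mathcal R$. Using the Hodge symmetry $h^{p,k-p} = h^{k-p,p}$ to pair terms, I obtain $2D = \sum_{p=0}^{k} h^{p,k-p}(k-2p)^2 = \sum_{p=1}^{k+1} r_p\, w_p^2$, where $r_p$ is the $p$-th entry of $\mathcal R$ and $w_p := k+2-2p$ is the ``Hodge weight'' attached to row $p$. Recalling $C_{\mathcal R^t} = 12/\sum_q \mu_q(\mu_q^2-1)$ for $\mu = \mathcal R^t$ together with the classical identity $\sum_{s=1}^{\mu}(\mu+1-2s)^2 = \mu(\mu^2-1)/3$, the inequality reduces to the box-level estimate
\[
\sum_{\text{runs}\,[p_0,p_1]}\;\sum_{p=p_0}^{p_1}(p_0+p_1-2p)^2 \;\leq\; \sum_{p=1}^{k+1} r_p\, w_p^2.
\]
By Lemma \ref{DUAL}, the parts $\mu_q$ of $\mathcal R^t$ correspond to maximal runs $[p_0,p_1]$ of consecutive rows whose lengths meet a given column index. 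Reading both sides box by box, the left assigns the \emph{run-weight} $p_0+p_1-2p$ while the right assigns the \emph{row-weight} $w_p = k+2-2p$ to each box.

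The key geometric input is the symmetry $r_p = r_{k+2-p}$ coming from Hodge duality. It forces the column runs to decompose into mirror pairs $[p_0,p_1] \leftrightarrow [k+2-p_1,\, k+2-p_0]$ together with self-mirror runs characterized by $p_0+p_1 = k+2$. On a self-mirror run the run-weight equals the row-weight identically. For an asymmetric mirror pair, I set $a = p_0-1$ and $b = (k+1)-p_1$, so the two runs contribute additive shifts $+(a-b)$ and $-(a-b)$ respectively to the row-weight. Expanding $(w_p \pm (a-b))^2$ and using $\sum_{p=p_0}^{p_1} w_p = -(p_1-p_0+1)(a-b)$ together with the mirror sum of opposite sign, a direct computation shows that each run's box-sum falls below its row-sum by $(a-b)^2(p_1-p_0+1)$, so that the pair's total deficit is $2(a-b)^2(p_1-p_0+1) \geq 0$. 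Summing over all mirror pairs and all self-mirror runs gives the desired inequality.

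The main obstacle I foresee is the bookkeeping of the mirror-pair decomposition: verifying that every column run is either self-mirror or belongs to a unique mirror pair, and tracking the signs in the cross-term cancellation carefully. Once that decomposition is laid out cleanly, the inequality becomes a term-by-term consequence of the expansion. The specializations for $k=3,4,5$ in the preceding corollaries serve as sanity checks for the denominators produced.
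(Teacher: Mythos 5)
Your proposal is correct, and its first step (reduce via Theorem \ref{HodgeMetric} to the inequality $C_{\mathcal R^t}\geq 2/D$, with $D$ your denominator) is exactly the paper's; but you establish that inequality by a genuinely different, more hands-on route. The paper invokes Part (iv) of Lemma \ref{OrderOfPartition}: it chooses the permutation $\mathcal R_1$ of $\mathcal R$ listing the Hodge numbers in increasing order of $|k-2p|$ (e.g.\ $(h^{m,m+1},h^{m+1,m},\dots,n,n,1,1)$ for $k=2m+1$), gets $C_{\mathcal R^t}\geq D_{\mathcal R_1}$ from that lemma, and then a short computation with formula (\ref{ConstantD}) shows $D_{\mathcal R_1}$ equals exactly $2/D$. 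You instead prove the inequality from scratch on the generalized Young diagram: each part $\mu_q$ of $\mathcal R^t$ is a maximal column run $[p_0,p_1]$, and completing the square against the row weights $w_p=k+2-2p$ gives $\sum_{p=p_0}^{p_1}(p_0+p_1-2p)^2=\sum_{p=p_0}^{p_1}w_p^2-(p_1-p_0+1)\bigl((p_0+p_1)-(k+2)\bigr)^2$. Two remarks. First, this identity shows the deficit is nonnegative for \emph{every} run individually, so the mirror-pair decomposition you flag as your main obstacle is unnecessary: Hodge symmetry is needed only to rewrite $2D$ as $\sum_p r_p w_p^2$, not for the run-by-run estimate. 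Second, the run structure you use comes from the definition of the conjugate set partition in Section \ref{Composition} rather than from Lemma \ref{DUAL}. What your route buys is a self-contained argument with an explicit deficit term, in effect re-deriving the relevant case of Parts (iii)--(iv) of Lemma \ref{OrderOfPartition}; what the paper's route buys is brevity, since that monotonicity has already been established once and for all.
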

\begin{proof}
For $k=2m+1$ is odd, we can choose a permutation of $\mathcal R$ as $\mathcal R_1=(h^{m,m+1}, h^{m+1,m}, h^{m-1,m+2},h^{m+2,m-1}, \cdots, h^{2,2m-1},h^{2m-1,2}, n,n,1,1)$. Then by Part (iv) of Lemma \ref{OrderOfPartition}, we have $C_{\mathcal R^t}\leq D_{\mathcal R_1}$. So $\mathcal R_1=(r_1,\cdots, r_{2m+2})$ where $r_{2p+1}=r_{2p+2}=h^{m-p,m+p+1}$ for $0\leq p\leq m$.  Using the formula (\ref{ConstantD}), we have
\begin{eqnarray*}
D_{\mathcal R_1}=\frac{4}{\sum\limits_{p=1}^{2m+2}r_p\cdot p(p-1)}
=\frac{4}{\sum\limits_{p=0}^{m}h^{m-p,m+p+1}\cdot [(2p+1)2p+(2p+2)(2p+1)]}\\
=\frac{2}{\sum\limits_{p=0}^{m}h^{m-p,m+p+1}\cdot (2p+1)^2}
=\frac{2}{k^2+n(k-2)^2+\sum\limits_{p=2}^m h^{p,k-p}\cdot (k-2p)^2}.
\end{eqnarray*}

For $k=2m$ is even, we can choose a permutation of $\mathcal R$ as \\
$\mathcal R_1=(h^{m,m}, h^{m-1,m+1}, h^{m+1,m-1},\cdots,h^{2,2m-2},h^{2m-2,2}, n,n,1,1)$. The rest is similar to the odd case.

Then the corollary follows from Theorem \ref{HodgeMetric}.
\end{proof}

\end{document}